\newif \ifSISC \SISCfalse
\pgfplotsset{every axis/.append style={
                     label style={font=\Large},
                     tick label style={font=\Large},
                     legend style={font=\Large}
                     }
}
\title{Hybrid high-order approximations of div-curl systems on domains with general topology}
\author{J\'er\'emy Dalphin\thanks{ERMES department, EDF R\&D, EDF Lab Paris-Saclay, 91120 Palaiseau, France (\email{jeremy.dalphin@edf.fr}).}
\and Jean-Pierre Ducreux\thanks{ERMES department, EDF R\&D, EDF Lab Paris-Saclay, 91120 Palaiseau, France (\email{jean-pierre.ducreux@edf.fr}).}
\and Simon Lemaire\thanks{Inria, Univ.~Lille, CNRS, UMR 8524 -- Laboratoire Paul Painlev\'e, 59000 Lille, France (\email{simon.lemaire@inria.fr}).}
\and Silvano Pitassi\thanks{Inria, Univ.~Lille, CNRS, UMR 8524 -- Laboratoire Paul Painlev\'e, 59000 Lille \& ERMES department, EDF R\&D, EDF Lab Paris-Saclay, 91120 Palaiseau, France (\email{silvano.pitassi@inria.fr}).}}
\newcommand{\email}[1]{\href{mailto:#1}{#1}}
\title{Hybrid high-order approximations of div-curl systems on domains with general topology}
\author[1]{J\'er\'emy Dalphin\footnote{\email{jeremy.dalphin@edf.fr}}}
\author[1]{Jean-Pierre Ducreux\footnote{\email{jean-pierre.ducreux@edf.fr}}}
\affil[1]{ERMES department, EDF R\&D, EDF Lab Paris-Saclay, 91120 Palaiseau, France}
\author[2]{Simon Lemaire\footnote{\email{simon.lemaire@inria.fr} (corresponding author)}}
\author[2,1]{Silvano Pitassi\footnote{\email{silvano.pitassi@inria.fr}}}
\affil[2]{Inria, Univ.~Lille, CNRS, UMR 8524 -- Laboratoire Paul Painlev\'e, 59000 Lille, France}
\newtheorem{theorem}{Theorem}
\newtheorem{lemma}[theorem]{Lemma}
\newtheorem{remark}[theorem]{Remark}
\newlength{\arrow}
\newcommand*{\myrightarrow}[1]{\xrightarrow{\mathmakebox[\arrow]{#1}}}
\newcommand{\defi}{\mathrel{\mathop:}=}
\newcommand{\st}{~:~}
\newcommand{\di}{{\rm dim}}
\renewcommand{\vec}[1]{\boldsymbol{#1}}
\newcommand{\normal}{\vec{n}}
\newcommand{\Natu}{\mathbb{N}}
\newcommand{\Real}{\mathbb{R}}
\DeclareMathOperator{\Div}{div}
\DeclareMathOperator{\Curl}{\bf curl}
\DeclareMathOperator{\Grad}{\bf grad}
\newcommand{\homol}{\mathfrak{H}}
\newcommand{\Dom}{{\rm Dom}}
\newcommand{\Ker}{{\rm Ker}}
\newcommand{\Ima}{{\rm Im}}
\newcommand{\LL}[1][\Omega]{L^2(#1)}
\newcommand{\bLL}[1][\Omega]{\vec{L}^2(#1)}
\newcommand{\Ho}[1][\Omega]{H^1(#1)}
\newcommand{\bHo}[1][\Omega]{\vec{H}^1(#1)}
\newcommand{\Hzo}{H^1_0(\Omega)}
\newcommand{\HGo}{H^1_{\Gamma}(\Omega)}
\newcommand{\HSo}{H^1_{\Sigma}(\hat{\Omega})}
\newcommand{\Hdiv}[1][\Omega]{\vec{H}(\Div_{\mu} ; #1)}
\newcommand{\Hzdiv}[1][\Omega]{\vec{H}_0(\Div_{\mu} ; #1)}
\newcommand{\Hdivz}[1][\Omega]{\vec{H}(\Div^0_{\mu} ; #1)}
\newcommand{\Hzdivz}[1][\Omega]{\vec{H}_0(\Div^0_{\mu} ; #1)}
\newcommand{\Hcurl}[1][\Omega]{\vec{H}(\Curl ; #1)}
\newcommand{\Hzcurl}[1][\Omega]{\vec{H}_{\vec{0}}(\Curl ; #1)}
\newcommand{\Hcurlz}[1][\Omega]{\vec{H}(\Curl^{\vec{0}} ; #1)}
\newcommand{\Hzcurlz}[1][\Omega]{\vec{H}_{\vec{0}}(\Curl^{\vec{0}} ; #1)}
\newcommand{\fharmonic}{\vec{H}_{\vec{\tau}}(\Omega)}
\newcommand{\sharmonic}{\vec{H}_{n}(\Omega)}
\renewcommand{\d}{\mathcal{D}}
\renewcommand{\t}{\mathcal{T}}
\newcommand{\f}{\mathcal{F}}
\newcommand{\Poly}{\mathcal{P}}
\newcommand{\bPoly}{\vec{\Poly}}
\newcommand{\pGT}[1]{\vec{\mathcal{G}}^{#1}(T)}
\newcommand{\kGT}[1]{\vec{\mathcal{G}}^{{\rm c},#1}(T)}
\newcommand{\pRT}[1]{\vec{\mathcal{R}}^{#1}(T)}
\newcommand{\pRF}[1]{\vec{\mathcal{R}}^{#1}(F)}
\newcommand{\kRF}[1]{\vec{\mathcal{R}}^{{\rm c},#1}(F)}
\newcommand{\pQF}[1]{\vec{\mathcal{Q}}^{#1}(F)}
\newcommand{\bProjT}[1][k]{\vec{\pi}^{#1}_{\bPoly,T}}
\newcommand{\bProjF}[1][k]{\vec{\pi}^{#1}_{\vec{\mathcal{Q}},F}}
\newcommand{\bProjTd}[1][k]{\vec{\pi}^{#1}_{\bPoly,\t}}
\newcommand{\ProjT}[1][k-1]{\pi^{#1}_{\Poly,T}}
\newcommand{\ProjF}[1][k]{\pi^{#1}_{\Poly,F}}
\newcommand{\ProjTd}[1][k-1]{\pi^{#1}_{\Poly,\t}}
\newcommand{\dHcurlT}{\underline{\vec{U}}_T^{k}}
\newcommand{\dHcurl}{\underline{\vec{U}}_{\d}^{k}}
\newcommand{\dHzcurl}{\underline{\vec{U}}_{\d,\vec{0}}^{k}}
\newcommand{\dHoT}{\underline{P}_T^{k}}
\newcommand{\hdHo}{\hat{\underline{P}}_{\d}^{k}}
\newcommand{\hdHSof}{\hat{\underline{P}}_{\d,\Sigma}^{\flat,k}}
\newcommand{\hdHSo}{\hat{\underline{P}}_{\d,\Sigma}^{k}}
\newcommand{\dHo}{\underline{P}_{\d}^{k}}
\newcommand{\dHzo}{\underline{P}_{\d,0}^{k}}
\newcommand{\dHGo}{\underline{P}_{\d,\Gamma}^{k}}
\newcommand{\uud}{\underline{\vec{u}}_{\d}}
\newcommand{\uTd}{\vec{u}_{\t}}
\newcommand{\uuT}{\underline{\vec{u}}_T}
\newcommand{\uhd}{\underline{\vec{h}}_{\d}}
\newcommand{\hTd}{\vec{h}_{\t}}
\newcommand{\uad}{\underline{\vec{a}}_{\d}}
\newcommand{\aTd}{\vec{a}_{\t}}
\newcommand{\uaT}{\underline{\vec{a}}_T}
\newcommand{\uvd}{\underline{\vec{v}}_{\d}}
\newcommand{\vTd}{\vec{v}_{\t}}
\newcommand{\uvT}{\underline{\vec{v}}_T}
\newcommand{\vT}{\vec{v}_T}
\newcommand{\vFt}{\vec{v}_{F,\vec{\tau}}}
\newcommand{\uwd}{\underline{\vec{w}}_{\d}}
\newcommand{\uwT}{\underline{\vec{w}}_T}
\newcommand{\wT}{\vec{w}_T}
\newcommand{\wFt}{\vec{w}_{F,\vec{\tau}}}
\newcommand{\upd}{\underline{p}_{\d}}
\newcommand{\upT}{\underline{p}_T}
\newcommand{\pT}{p_T}
\newcommand{\uqd}{\underline{q}_{\d}}
\newcommand{\qTd}{q_{\t}}
\newcommand{\uqT}{\underline{q}_T}
\newcommand{\qT}{q_T}
\newcommand{\qF}{q_F}
\newcommand{\urd}{\underline{r}_{\d}}
\newcommand{\urT}{\underline{r}_T}
\newcommand{\rT}{r_T}
\newcommand{\intHcurlT}{\underline{\vec{I}}^k_T}
\newcommand{\intHcurl}{\underline{\vec{I}}^k_\d}
\newcommand{\intHoT}{\underline{I}^k_T}
\newcommand{\intHo}{\underline{I}^k_\d}
\newcommand{\CT}{\vec{C}_T^{k-1}}
\newcommand{\CTd}{\vec{C}_{\t}^{k-1}}
\newcommand{\GT}{\vec{G}_T^{k}}
\newcommand{\GTd}{\vec{G}_{\t}^{k}}
\newcommand{\logLogSlopeTriangle}[5]
{
    \pgfplotsextra
    {
        \pgfkeysgetvalue{/pgfplots/xmin}{\xmin}
        \pgfkeysgetvalue{/pgfplots/xmax}{\xmax}
        \pgfkeysgetvalue{/pgfplots/ymin}{\ymin}
        \pgfkeysgetvalue{/pgfplots/ymax}{\ymax}

        \pgfmathsetmacro{\xArel}{#1}
        \pgfmathsetmacro{\yArel}{#3}
        \pgfmathsetmacro{\xBrel}{#1-#2}
        \pgfmathsetmacro{\yBrel}{\yArel}
        \pgfmathsetmacro{\xCrel}{\xArel}

        \pgfmathsetmacro{\lnxB}{\xmin*(1-(#1-#2))+\xmax*(#1-#2)} 
        \pgfmathsetmacro{\lnxA}{\xmin*(1-#1)+\xmax*#1} 
        \pgfmathsetmacro{\lnyA}{\ymin*(1-#3)+\ymax*#3} 
        \pgfmathsetmacro{\lnyC}{\lnyA+#4*(\lnxA-\lnxB)}
        \pgfmathsetmacro{\yCrel}{\lnyC-\ymin)/(\ymax-\ymin)}

        \coordinate (A) at (rel axis cs:\xArel,\yArel);
        \coordinate (B) at (rel axis cs:\xBrel,\yBrel);
        \coordinate (C) at (rel axis cs:\xCrel,\yCrel);

        \draw[#5]   (A)-- node[pos=0.5,anchor=north] {\scriptsize{1}}
                    (B)-- 
                    (C)-- node[pos=0.,anchor=west] {\scriptsize{#4}} 
                    (A);
    }
}
\newcommand{\logLogSlopeTriangleNDOFs}[5]
{
    \pgfplotsextra
    {
        \pgfkeysgetvalue{/pgfplots/xmin}{\xmin}
        \pgfkeysgetvalue{/pgfplots/xmax}{\xmax}
        \pgfkeysgetvalue{/pgfplots/ymin}{\ymin}
        \pgfkeysgetvalue{/pgfplots/ymax}{\ymax}

        \pgfmathsetmacro{\xArel}{#1}
        \pgfmathsetmacro{\yArel}{#3}
        \pgfmathsetmacro{\xBrel}{#1-#2}
        \pgfmathsetmacro{\yBrel}{\yArel}
        \pgfmathsetmacro{\xCrel}{\xArel}

        \pgfmathsetmacro{\lnxB}{\xmin*(1-(#1-#2))+\xmax*(#1-#2)} 
        \pgfmathsetmacro{\lnxA}{\xmin*(1-#1)+\xmax*#1} 
        \pgfmathsetmacro{\lnyA}{\ymin*(1-#3)+\ymax*#3} 
        \pgfmathsetmacro{\lnyC}{\lnyA-#4*(\lnxA-\lnxB)}
        \pgfmathsetmacro{\yCrel}{\lnyC-\ymin)/(\ymax-\ymin)}

        \coordinate (A) at (rel axis cs:\xArel,\yArel);
        \coordinate (B) at (rel axis cs:\xBrel,\yBrel);
        \coordinate (C) at (rel axis cs:\xCrel,\yCrel);

        \draw[#5]   (A)-- node[pos=0.5,anchor=north] {\scriptsize{1}}
                    (B)-- 
                    (C)-- node[pos=0.,anchor=east] {\scriptsize{#4}} 
                    (A);
    }
}
\begin{document}

\maketitle

\ifSISC

\begin{abstract}
  We devise and analyze hybrid polyhedral methods of arbitrary order for the approximation of div-curl systems on three-dimensional domains featuring non-trivial topology. The div-curl systems we are interested in stem from magnetostatics, and can either be first-order (field formulation) or second-order (vector potential formulation). The well-posedness of the resulting discrete problems essentially hinges on recently established, topologically generic, hybrid versions of the (first and second) Weber inequalities. Our error analysis covers the case of regular solutions. Leveraging (co)homology computation techniques from the literature, we perform an in-depth numerical assessment of our approach, covering, in particular, the case of non-simply-connected domains.
\end{abstract}
\begin{keywords}
  {\scriptsize Div-curl systems; Polyhedral meshes; Hybrid methods; de Rham cohomology; Computational topology.}
\end{keywords}
\begin{MSCcodes}
  {\scriptsize 65N12, 14F40, 35Q60.}
\end{MSCcodes}

\else

\begin{abstract}
  We devise and analyze hybrid polyhedral methods of arbitrary order for the approximation of div-curl systems on three-dimensional domains featuring non-trivial topology. The div-curl systems we are interested in stem from magnetostatics, and can either be first-order (field formulation) or second-order (vector potential formulation). The well-posedness of the resulting discrete problems essentially hinges on recently established, topologically generic, hybrid versions of the (first and second) Weber inequalities. Our error analysis covers the case of regular solutions. Leveraging (co)homology computation techniques from the literature, we perform an in-depth numerical assessment of our approach, covering, in particular, the case of non-simply-connected domains.
  \medskip\\
  \textbf{Keywords:} Div-curl systems; Polyhedral meshes; Hybrid methods; de Rham cohomology; Computational topology.
  \smallskip\\
  \textbf{AMS Subject Classification 2020:} 65N12, 14F40, 35Q60.
\end{abstract}

\fi

\section{Introduction} \label{se:intro}

Let $\Omega$ be a domain in $\Real^3$, i.e.~a bounded and connected Lipschitz open set of $\Real^3$.
In the $L^2$ framework, the standard (primal) de Rham complex reads:
\begin{equation} \label{eq:derham}
  \{0\}\myrightarrow{0}\Ho\myrightarrow{\Grad}\Hcurl\myrightarrow{\Curl}\vec{H}(\Div;\Omega)\myrightarrow{\Div}\LL\myrightarrow{0}\{0\},
\end{equation}
with cohomology spaces
\begin{equation} \label{eq:cohom}
  \begin{aligned}
    \homol^0&\defi\Ker(\Grad)(/\Ima(0)),\qquad&\homol^1&\defi\Ker(\Curl)/\Ima(\Grad),\\
    \homol^2&\defi\Ker(\Div)/\Ima(\Curl),\qquad&\homol^3&\defi(\Ker(0)=)\LL/\Ima(\Div).
  \end{aligned}
\end{equation}
The spaces $\homol^n$ have respective dimensions equal to the Betti numbers $\beta_n$, with $\beta_0=1$ (number of connected components of $\Omega$), $\beta_1\in\Natu$ (number of tunnels crossing through $\Omega$), $\beta_2\in\Natu$ (number of voids encapsulated by $\Omega$), and $\beta_3=0$.
In turn, the dual de Rham complex reads:
\begin{equation} \label{eq:derham.adj}
  \{0\}\myrightarrow{0}\Hzo\myrightarrow{\Grad_{\,0}}\Hzcurl\myrightarrow{\Curl_{\,\vec{0}}}\vec{H}_0(\Div;\Omega)\myrightarrow{\Div_{\,0}}\LL\myrightarrow{0}\{0\},
\end{equation}
where the operators $\Grad_{\,0}$, $\Curl_{\,\vec{0}}$, and $\Div_{\,0}$ are respectively $L^2$-adjoint to the operators $-\Div$, $\Curl$, and $-\Grad$.
The zero subscripts aim to recall that the domains of the adjoint differential operators embed zero boundary conditions.
The corresponding homology spaces are
\begin{equation} \label{eq:cohom.adj}
  \begin{aligned}
    \homol^0_0&\defi\Ker(\Grad_{\,0})(/\Ima(0)),\qquad&\homol^1_0&\defi\Ker(\Curl_{\,\vec{0}})/\Ima(\Grad_{\,0}),\\
    \homol^2_0&\defi\Ker(\Div_{\,0})/\Ima(\Curl_{\,\vec{0}}),\qquad&\homol^3_0&\defi(\Ker(0)=)\LL/\Ima(\Div_{\,0}).
  \end{aligned}
\end{equation}
Importantly, there holds $\homol^n_0=\homol^{3-n}$ (this is the so-called {\em Poincar\'e--Lefschetz duality}).

Let $\mu$ be a possibly varying coefficient, assumed to be uniformly bounded by above and by below by some positive constants. In what follows, the symbols $\perp$ and $\perp_\mu$ respectively denote standard and $\mu$-weighted $L^2$-orthogonality. Let the sequence of differential operators $\{\texttt{Grad},\texttt{Curl},\texttt{Div}\}$ stand either (i) for $\{\Grad,\Curl,\Div\}$ from the primal de Rham complex~\eqref{eq:derham}, or (ii) for $\{\Grad_{\,0},\Curl_{\,\vec{0}},\Div_{\,0}\}$ from the dual de Rham complex~\eqref{eq:derham.adj}. In the first case, let $\mathfrak{h}^n\defi\homol^n$; in the second, let $\mathfrak{h}^n\defi\homol^n_0=\homol^{3-n}$. We aim to study problems of the following form. Given some datum $\vec{j}\in\Ker(\texttt{Div})$, find $\vec{h}\in\Dom(\texttt{Curl})\cap\Ima(\texttt{Grad})^{\perp_\mu}$ s.t.
\begin{equation} \label{eq:prob.fo}
  \texttt{Curl}(\vec{h})=\vec{j}.
\end{equation}
One can easily verify that $\texttt{Curl}$, seen as a bounded linear mapping from $\Dom(\texttt{Curl})\cap\Ima(\texttt{Grad})^{\perp_\mu}$ to $\Ker(\texttt{Div})$, is a Fredholm operator, with null space $\Ker(\texttt{Curl})\cap\Ima(\texttt{Grad})^{\perp_\mu}\cong\mathfrak{h}^1$, and defect space $\Ker(\texttt{Div})/\Ima(\texttt{Curl})=\mathfrak{h}^2$. Thus, the following alternative holds true for Problem~\eqref{eq:prob.fo}:
\begin{itemize}
  \item[$\bullet$] either $\di(\mathfrak{h}^1)=\di(\mathfrak{h}^2)=0$, then Problem~\eqref{eq:prob.fo} admits a unique solution;
  \item[$\bullet$] in the opposite case, for Problem~\eqref{eq:prob.fo} to admit a solution, it is necessary that $\vec{j}\in\Ima(\texttt{Curl})$, which amounts, for $\vec{j}\in\Ker(\texttt{Div})$, to additionally satisfying $\vec{j}\perp\mathfrak{h}^2$; the uniqueness of the solution is then recovered by imposing $\di(\mathfrak{h}^1)$ additional constraints to $\vec{h}$ (which already satisfies $\vec{h}\perp_{\mu}\Ima(\texttt{Grad})$).
\end{itemize}
Problem~\eqref{eq:prob.fo} is prototypical of the first-order formulation (also known as {\it field formulation}) of magnetostatics.
In practice, $\vec{j}:\Omega\to\Real^3$ is a given electric current density, $\mu:\Omega\to\Real_{>0}$ is the magnetic permeability of the medium, and $\vec{h}:\Omega\to\Real^3$ is the sought magnetic field. 
The model can be endowed with either normal or tangential boundary conditions, depending on which version (respectively,~\eqref{eq:derham} or~\eqref{eq:derham.adj}) of the de Rham complex the sequence of differential operators stems from. We refer the reader to Section~\ref{ssse:field} for a somewhat more conventional rewriting of Problem~\eqref{eq:prob.fo} in the case of normal boundary conditions.

Let us now introduce $\texttt{Curl}^\star$, $L^2$-adjoint of $\texttt{Curl}$. Recall that $\Ker(\texttt{Curl})^\perp=\Ima(\texttt{Curl}^\star)$. If the solution $\vec{h}\perp_{\mu}\Ima(\texttt{Grad})$ to Problem~\eqref{eq:prob.fo} (assuming it exists) additionally satisfies $\vec{h}\perp_{\mu}\mathfrak{h}^1$, then it can be written $\vec{h}=\mu^{-1}\texttt{Curl}^\star(\vec{a})$ for some vector potential $\vec{a}\in\Dom(\texttt{Curl}^\star)$. Obviously, the vector potential $\vec{a}$ is non-unique; it is only defined up to an element of $\Ker(\texttt{Curl}^\star)$ (gauge). Gauging out to zero, Problem~\eqref{eq:prob.fo} equivalently rewrites: find $\vec{a}\in\Dom(\texttt{Curl}^\star)\cap\Ker(\texttt{Curl}^\star)^\perp$ s.t.
\begin{equation} \label{eq:prob.so}
  \texttt{Curl}\big(\mu^{-1}\texttt{Curl}^\star(\vec{a})\big)=\vec{j},
\end{equation}
and the (unique) solution to this problem does exist as soon as $\vec{j}\in\Ker(\texttt{Div})$ additionally satisfies $\vec{j}\perp\mathfrak{h}^2$.
Problem~\eqref{eq:prob.so} is prototypical of the second-order formulation (also known as {\it vector potential formulation}) of magnetostatics. In practice, $\vec{a}:\Omega\to\Real^3$ is the magnetic vector potential.
When Problem~\eqref{eq:prob.fo} is endowed with normal (resp.~tangential) boundary conditions, Problem~\eqref{eq:prob.so} is endowed with tangential (resp.~normal) boundary conditions. In the second scenario, part of the boundary conditions in Problem~\eqref{eq:prob.so} are actually natural, and encode the tangential prescription on $\vec{h}$.
For a more conventional rewriting of Problem~\eqref{eq:prob.so} in the case of tangential boundary conditions (first scenario), we refer to Section~\ref{ssse:vecpot}.

In this work, we aim at devising arbitrary-order {\em hybrid} polyhedral discretizations of the first- and second-order model problems~\eqref{eq:prob.fo} and~\eqref{eq:prob.so}. By definition, hybrid methods only attach unknowns to the faces and to the cells of the spatial partition at hand. Examples of such approaches are the Hybridizable Discontinuous Galerkin (HDG)~\cite{CoGoL:09}, the Weak Galerkin (WG)~\cite{WanYe:13}, the Hybrid High-Order (HHO)~\cite{DPErn:15,DPELe:14}, or the non-conforming Virtual Element (ncVE)~\cite{LipMa:14,AdDLM:16} methods.
As might be expected, and as was first documented in~\cite{Cockb:16,CDPEr:16}, all these technologies are very tightly connected.
The devising of hybrid polyhedral methods for div-curl systems has already been addressed to some extent in the HDG and WG literatures. For second-order models, these contributions include~\cite{NPCoc:11,CQSSo:17,CCuXu:19} (cf.~also~\cite{DuSay:20}) for HDG, and~\cite{MWYZh:15} for WG, all of them restricted to trivial topologies. Regarding now first-order models, the only contributions we are aware of are~\cite{LiYeZ:18}, in which a primal WG method is introduced, and~\cite{WangW:16,CaoWW:22,CaoWW:23}, in which alternative (primal-dual) WG approaches are studied. In all these contributions, full face polynomial spaces are employed to discretize the vectorial variable. Also, either the case of non-trivial topologies is not covered (like in~\cite{LiYeZ:18}), or it is treated but in a non-robust fashion. Recently, the discretization of both first- and second-order models of magnetostatics has been studied in the HHO context~\cite{CDPLe:22}. Therein, the vectorial variable is sought into trimmed face polynomial spaces, reusing and extending ideas introduced in~\cite{CQSSo:17} (cf.~also~\cite{LeSch:16}). The contribution~\cite{CDPLe:22}, however, solely covers the case of trivial topologies. Our objective in the present work is to fill this gap. We aim at devising, analyzing, and numerically assessing trimmed HHO methods for (first- and second-order) models set in domains with arbitrary topology.
With respect to~\cite{WangW:16,CaoWW:22,CaoWW:23}, apart from using trimmed face spaces, our approach for first-order models is also based on a different, more suitable variational formulation (inspired from~\cite{Kikuc:89}), which allows for both a leaner construction and a robust handling of harmonic fields. In turn, our stability analysis hinges on the systematic use of the (topologically generic) hybrid Weber inequalities recently established in~\cite{LePit:25} by the last two authors. Finally, our error analysis (cf.~Theorems~\ref{th:field.esti} and~\ref{th:vecpot.esti}) covers the case of regular solutions.
Let us emphasize that~\cite{LePit:25} is exclusively concerned with general-purpose discrete functional analysis; it does not address the practical devising of hybrid methods, which is the subject of the present work.
To the best of our knowledge, in the realm of polyhedral approaches, our work is the first to deal, from the devising to the implementation in 3D, and in a systematic and robust way, with non-trivial topologies.
Let us stress that, from an application point of view, non-trivial topologies are ubiquitous (think, e.g., of the toroidal vacuum chamber of a tokamak). Their proper handling is thus of paramount importance.
From an algorithmic point of view, arbitrary topologies come with a number of additional difficulties. It is especially true for non-simply-connected domains, for which discrete cutting surfaces might need to be computed from the polyhedral mesh at hand (see Figure~\ref{fig:cut}).

The material is organized as follows. In Section~\ref{se:prelim}, we introduce both the topological and functional frameworks, and we recall the first and second Helmholtz--Hodge decompositions. In Section~\ref{se:dis.set}, we introduce the discrete setting, including polyhedral discretizations, polynomial decompositions, and hybrid spaces. In Section~\ref{se:hho}, we devise and analyze HHO methods for (first- and second-order) model problems of the forms~\eqref{eq:prob.fo} and~\eqref{eq:prob.so}. Finally, in Section~\ref{se:num}, we provide a comprehensive set of numerical experiments on non-trivial 3D domains, assessing the relevance of our methodology.

\section{Preliminaries} \label{se:prelim}

\subsection{Topological framework} \label{sse:top}

We recall that $\Omega$ denotes a domain in $\Real^3$, that is a bounded and connected Lipschitz open set of $\Real^3$. Let $\Gamma\defi\partial\Omega$ denote its boundary, and $\normal:\Gamma\to\Real^3$ be the (almost everywhere defined) unit vector field normal to $\Gamma$, pointing outward from $\Omega$. We recall that the Betti numbers $\beta_1$ and $\beta_2$ of $\Omega$ respectively count the number of tunnels crossing through $\Omega$ ($\beta_1\in\Natu$), and the number of voids encapsulated by $\Omega$ ($\beta_2\in\Natu$). For $\Omega$ simply-connected, $\beta_1=0$. Likewise, when the boundary $\Gamma$ of $\Omega$ is connected, $\beta_2=0$. In what follows, when both $\beta_1,\beta_2$ are equal to zero, we say that the topology of $\Omega$ is trivial.

Whenever $\beta_1>0$, we make the following classical assumption: there exist $\beta_1$ 
orientable and connected two-dimensional manifolds with boundary, denoted $\Sigma_1,\ldots,\Sigma_{\beta_1}$, called {\em cutting surfaces}, satisfying $\Sigma_i\subset\Omega$ and $\partial\Sigma_i\subset\Gamma$ for all $i\in\{1,\ldots,\beta_1\}$, such that the open set $\hat{\Omega}\defi\Omega\setminus\cup_{i\in\{1,\ldots,\beta_1\}}\Sigma_i$ is simply-connected, i.e.~its first Betti number is zero (see Figure~\ref{fig:hollow.torus.geo} for an example). 
We will assume, in what follows, that $\hat{\Omega}$ is connected (which is generally the case starting from a connected domain $\Omega$), and that the cutting surfaces are sufficiently regular so that the set $\hat{\Omega}$ is pseudo-Lipschitz (cf.~\cite[Def.~3.2.2]{ACJLa:18}). For any $i\in\{1,\ldots,\beta_1\}$, we let $\normal_{\Sigma_i}:\Sigma_i\to\Real^3$ be the (almost everywhere defined) unit vector field normal to $\Sigma_i$, the orientation of which we arbitrarily prescribe (the cutting surfaces $\Sigma_i$ are supposed to be orientable). The orientation of $\normal_{\Sigma_i}$ being prescribed, we associate the tag ``$+$'' (resp.~``$-$'') to the side of $\hat{\Omega}$ (with respect to $\Sigma_i$) for which $\normal_{\Sigma_i}$ points outward (resp.~inward).
\begin{remark}[Cutting surfaces]
  In~\cite{BFGhi:12}, the class of (so-called) weakly-Helmholtz domains is introduced. Its definition does not assume that the cut domain $\hat{\Omega}$ is simply-connected, which allows to cover more exotic topologies (e.g., the domain obtained as the complement in a cube of a trefoil knot). Our analysis could be extended to cover weakly-Helmholtz domains. However, whereas weakly-Helmholtz domains assume non-intersecting cutting surfaces, we do not make such an assumption here. This allows us to consider the non-weakly-Helmholtz, yet physically relevant configuration of a toroidal vacuum chamber (see Figure~\ref{fig:hollow.torus.geo}), for which our approach seamlessly applies, as supported by the numerical results in Section~\ref{sse:hollow.torus}.
\end{remark}

Whenever $\beta_2>0$, letting $\Gamma_0$ be the (connected) boundary of the only unbounded component of the exterior open set $\Real^3\setminus\overline{\Omega}$, there exist $\beta_2$ (maximally) connected components $\Gamma_1,\ldots,\Gamma_{\beta_2}$ of $\Gamma$ such that $\Gamma=\cup_{j\in\{0,\ldots,\beta_2\}}\Gamma_j$ (cf.~Figure~\ref{fig:hollow.torus.geo}). If $\beta_2=0$, there holds $\Gamma=\Gamma_0$.

\subsection{Functional framework}

Let $\mu:\Omega\to\Real$ be a given function satisfying, for real numbers $0<\mu_\flat\leq\mu_\sharp<\infty$,
\begin{equation} \label{eq:mu.c}
  \mu_\flat\leq\mu(\vec{x})\leq\mu_\sharp\qquad\text{for a.e.}~\vec{x}\in\Omega.
\end{equation}

For $m\in\{2,3\}$, and for $X$ an $m$-dimensional, (relatively) open pseudo-Lipschitz subset of $\overline{\Omega}$, we let $\LL[X]$ (respectively, $\bLL[X]$) denote the Lebesgue space of square-integrable functions (respectively, $\Real^m$-valued vector fields) over $X$. The standard inner products (and norms) in $\LL[X]$ and $\bLL[X]$ are irrespectively denoted by $(\mathfrak{f},\mathfrak{g})_X\defi\int_X\mathfrak{f}{\cdot}\mathfrak{g}$ (and $\|{\cdot}\|_{0,X}\defi\sqrt{({\cdot},{\cdot})_X}$).
We also define $L^2_0(X)\defi\left\{v\in\LL[X]\mid\int_X v=0\right\}$ and $\vec{L}^2_{\vec{0}}(X)\defi\left\{\vec{v}\in\bLL[X]\mid\int_X\vec{v}=\vec{0}\right\}$.
For $s>0$, we let $H^s(X)$ (resp.~$\vec{H}^s(X)$) denote the Sobolev space of functions in $\LL[X]$ (resp.~$\Real^m$-valued vector fields in $\bLL[X]$) possessing square-integrable partial weak derivatives up to order $s$ over $X$ (for fractional $s$, we follow the classical Sobolev--Slobodeckij construction). The standard norms (and semi-norms) in $H^s(X)$ and $\vec{H}^s(X)$ are irrespectively denoted by $\|{\cdot}\|_{s,X}$ (and $|{\cdot}|_{s,X}$).
Given a disjoint Lipschitz partition $\mathcal{K}$ of $\Omega$ (in the sense that every $K\in\mathcal{K}$ is an open Lipschitz set, $K_1\cap K_2=\emptyset$ for all $K_1,K_2\in\mathcal{K}$ with $K_1\neq K_2$, and $\bigcup_{K\in\mathcal{K}}\overline{K}=\overline{\Omega}$), we also define the broken Sobolev space $\vec{H}^s(\mathcal{K})\defi\left\{\vec{v}\in\bLL[\Omega]\st\vec{v}_{\mid K}\in\vec{H}^s(K)\;\forall K\in\mathcal{K}\right\}$.

Let $Y$ be a three-dimensional, open Lipschitz subset of $\Omega$. Classically, we let
\begin{align*}
  \Hcurl[Y]&\defi\big\{\vec{v}\in\bLL[Y]\mid\Curl\vec{v}\in\bLL[Y]\big\},\\
  \Hdiv[Y]&\defi\big\{\vec{v}\in\bLL[Y]\mid\Div(\mu\vec{v})\in\LL[Y]\big\},
\end{align*}
with $\vec{H}(\Div;Y)\defi\vec{H}(\Div_1;Y)$, as well as their two subspaces $\Hcurlz[Y]\defi\big\{\vec{v}\in\Hcurl[Y]\mid\Curl\vec{v}\equiv\vec{0}\big\}$ and $\Hdivz[Y]\defi\big\{\vec{v}\in\Hdiv[Y]\mid\Div(\mu\vec{v})\equiv 0\big\}$.
Let $\normal_{\partial Y}:\partial Y\to\Real^3$ denote the (almost everywhere defined) outward unit normal vector to $\partial Y$.
For $\vec{v}\in\Hdiv[Y]$, the normal trace of $\mu\vec{v}$ on $\partial Y$ can be defined as an element of $H^{-\frac12}(\partial Y)$ (space of bounded linear forms on $H^{\frac12}(\partial Y)$), denoted $(\mu\vec{v})_{\mid\partial Y}{\cdot}\normal_{\partial Y}$.
Likewise, for $\vec{v}\in\Hcurl[Y]$, one can give a sense to the rotated tangential trace of $\vec{v}$ on $\partial Y$ as an element of $H^{-\frac12}(\partial Y)^3$ (space of bounded linear forms on $H^{\frac12}(\partial Y)^3$), denoted $\vec{v}_{\mid\partial Y}{\times}\normal_{\partial Y}$.
Following~\cite[Rmk.~1]{LePit:25}, from now on, tangential vector fields are identified to two-dimensional vector fields. The relevant boundary functional spaces then become $\bLL[\partial Y]=\LL[\partial Y]^2$, $\vec{H}^{\frac12}(\partial Y)=H^{\frac12}(\partial Y)^2$, and $\vec{H}^{-\frac12}(\partial Y)\defi H^{-\frac12}(\partial Y)^2$, and one may abuse the notation and write $\vec{v}_{\mid\partial Y}{\times}\normal_{\partial Y}\in\vec{H}^{-\frac12}(\partial Y)$.
In what follows, the duality pairings between $H^{-\frac12}(\partial Y)$ and $H^{\frac12}(\partial Y)$ on the one side, and between $\vec{H}^{-\frac12}(\partial Y)$ and $\vec{H}^{\frac12}(\partial Y)$ on the other side, are irrespectively denoted by $\langle{\cdot},{\cdot}\rangle_{\partial Y}$.
If $\vec{v}\in\Hcurl[Y]\cap\vec{H}(\Div;Y)\cap\vec{H}^s(Y)$ for some $s>\frac{1}{2}$, then for a.e.~$\vec{x}\in\partial Y$, $\vec{v}_{\mid\partial Y}(\vec{x})=(\vec{v}_{\mid\partial Y}{\cdot}\normal_{\partial Y})(\vec{x})\normal_{\partial Y}(\vec{x})+\normal_{\partial Y}(\vec{x}){\times}(\vec{v}_{\mid\partial Y}{\times}\normal_{\partial Y})(\vec{x})$.
In this case, $\vec{v}_{\mid\partial Y}{\cdot}\normal_{\partial Y}\in\LL[\partial Y]$, and $\vec{v}_{\mid\partial Y}{\times}\normal_{\partial Y}\in\bLL[\partial Y]$.

\ifSISC
To account for essential boundary conditions, we shall define the subspaces $\Hzcurl\defi\left\{\vec{v}\in\Hcurl\mid\vec{v}_{\mid\Gamma}{\times}\normal\equiv\vec{0}\right\}$ (along with $\Hzcurlz\defi\Hzcurl\cap\Hcurlz$), and $\Hzdiv\defi\left\{\vec{v}\in\Hdiv\mid(\mu\vec{v})_{\mid\Gamma}{\cdot}\normal\equiv 0\right\}$ (along with $\Hzdivz\defi\Hzdiv\cap\Hdivz$).
\else
To account for essential boundary conditions, we shall define the subspaces
\begin{equation*}
  \Hzcurl\defi\left\{\vec{v}\in\Hcurl\mid\vec{v}_{\mid\Gamma}{\times}\normal\equiv\vec{0}\right\}
\end{equation*}
(along with $\Hzcurlz\defi\Hzcurl\cap\Hcurlz$), and
\begin{equation*}
  \Hzdiv\defi\left\{\vec{v}\in\Hdiv\mid(\mu\vec{v})_{\mid\Gamma}{\cdot}\normal\equiv 0\right\}
\end{equation*}
(with $\Hzdivz\defi\Hzdiv\cap\Hdivz$).
\fi
We also set $\Hzo\defi\{v\in\Ho\!\mid\! v_{\mid\Gamma}\equiv 0\}$.
Last, assume that the first Betti number $\beta_1$ of $\Omega$ is positive. Then, for $v\in\LL[\hat{\Omega}]$ (resp.~for $\vec{v}\in\bLL[\hat{\Omega}]$), we denote by $\check{v}$ (resp.~by $\check{\vec{v}}$) its continuation to $\LL$ (resp.~to $\bLL$). Also, for any $v:\hat{\Omega}\to\Real$, and $i\in\{1,\ldots,\beta_1\}$, denoting $v^+_{\mid\Sigma_i}$ and $v^-_{\mid\Sigma_i}$ the traces of $v$ on $\Sigma_i$ defined (if need be, in a weak sense) from both sides of $\hat{\Omega}$ (respectively tagged by ``$+$'' and ``$-$''), we define the jump of $v$ across $\Sigma_i$ by
\begin{equation} \label{eq:jump}
  \llbracket v\rrbracket_{\Sigma_i}\defi v^+_{\mid\Sigma_i}-v^-_{\mid\Sigma_i}.
\end{equation}                         

\subsection{Helmholtz--Hodge decompositions}

We collect classical results about Helmholtz--Hodge decompositions; for further details, we refer the reader to~\cite{GiRav:86,DaLio:90,ABDGi:98} (cf.~also~\cite{GroKo:04,ACJLa:18}, as well as~\cite[Sec.~2.3]{LePit:25}).
Before proceeding, we define $\vec{L}^2_\mu(\Omega)\defi\left(\bLL,(\mu\,\cdot,{\cdot})_{\Omega}\right)$.

\subsubsection{1$^{\text{st}}$ Helmholtz--Hodge decomposition}

Let us first consider the harmonic space $\fharmonic\defi\Hzcurlz\cap\Hdivz$ which, by~\eqref{eq:cohom.adj}, satisfies $\fharmonic\cong\homol^1_0$. In particular, the space $\fharmonic$ has dimension $\beta_2$, and it can be proved that vector fields $\vec{w}\in\fharmonic$ are entirely characterized by the data of $\big(\langle(\mu\vec{w})_{\mid\Gamma_{j}}{\cdot}\normal,1\rangle_{\Gamma_{j}}\in\Real\big)_{j\in\{1,\ldots,\beta_2\}}$, where $\langle\cdot,\cdot\rangle_{\Gamma_j}$ here stands for the duality pairing in $H^{\frac12}(\Gamma_j)$.
The following $\vec{L}^2_\mu(\Omega)$-orthogonal Helmholtz--Hodge decomposition holds true:
\begin{equation} \label{eq:helm1}
  \bLL=\Grad\big(\Hzo\big)\overset{\perp_\mu}{\oplus}\frac{\mu_\sharp}{\mu}\Curl\big(\bHo\cap\vec{L}^2_{\vec{0}}(\Omega)\big)\overset{\perp_\mu}{\oplus}\fharmonic.
\end{equation}
Furthermore, letting
\begin{equation} \label{eq:HGo}
  \HGo\defi\big\{v\in\Ho,\,v_{\mid\Gamma_0}\equiv 0\mid\exists\,(\gamma_j)\in\Real^{\beta_2}, v_{\mid\Gamma_j}\equiv\gamma_j\,\forall j\in\{1,\ldots,\beta_2\}\big\},
\end{equation}
it can be noticed that
\begin{equation} \label{eq:GradHGo}
  \Grad\big(\HGo\big)=\Grad\big(\Hzo\big)\overset{\perp_\mu}{\oplus}\fharmonic.
\end{equation}
Remark, also, that $\Grad\big(\HGo\big)\subset\Hzcurlz$.

\subsubsection{2$^{\text{nd}}$ Helmholtz--Hodge decomposition}

Let us now consider the harmonic space $\sharmonic\defi\Hcurlz\cap\Hzdivz$ which, by~\eqref{eq:cohom.adj}, satisfies $\sharmonic\cong\homol^2_0$. In particular, the space $\sharmonic$ has dimension $\beta_1$, and it can be proved that vector fields $\vec{w}\in\sharmonic$ are entirely characterized by the data of $\big(\langle(\mu\vec{w})_{\mid\Sigma_{i}}{\cdot}\normal_{\Sigma_{i}},1\rangle_{\Sigma_{i}}\in\Real\big)_{i\in\{1,\ldots,\beta_1\}}$, where $\langle\cdot,\cdot\rangle_{\Sigma_i}$ here stands for the duality pairing in $H^{\frac12}(\Sigma_i)$.
The following $\vec{L}^2_\mu(\Omega)$-orthogonal Helmholtz--Hodge decomposition holds true:
\begin{equation} \label{eq:helm2}
  \bLL=\Grad\big(\Ho\cap L^2_0(\Omega)\big)\overset{\perp_\mu}{\oplus}\frac{\mu_\sharp}{\mu}\Curl\big(\bHo\cap\Hzcurl\big)\overset{\perp_\mu}{\oplus}\sharmonic.
\end{equation}
Furthermore, letting
\begin{equation} \label{eq:HSo}
  \HSo\defi\big\{v\in\Ho[\hat{\Omega}]\cap L^2_0(\hat{\Omega})\mid\exists\,(\sigma_i)\in\Real^{\beta_1}, \llbracket v\rrbracket_{\Sigma_i}\equiv\sigma_i\,\forall i\in\{1,\ldots,\beta_1\}\big\},
\end{equation}
it can be noticed that
\begin{equation} \label{eq:GradHSo}
  \check{\Grad}\big(\HSo\big)=\Grad\big(\Ho\cap L^2_0(\Omega)\big)\overset{\perp_\mu}{\oplus}\sharmonic.
\end{equation}
We remind the reader that, for $v\in\Ho[\hat{\Omega}]$, $\check{\Grad}\,v$ is the continuation to $\bLL$ of $\Grad v\in\bLL[\hat{\Omega}]$, and that the jump $\llbracket{\cdot}\rrbracket_{\Sigma_{i}}$ is defined in~\eqref{eq:jump}.
Note also that $\check{\Grad}\big(\HSo\big)\subset\Hcurlz$.

\section{Discrete setting} \label{se:dis.set}

From now on, we assume that the domain $\Omega\subset\Real^3$ is a (Lipschitz) polyhedron.

\subsection{Polyhedral discretizations} \label{sse:pol.dis}

We consider discretizations $\d\defi(\t,\f)$ of $\Omega\subset\Real^3$ in the sense of~\cite[Def.~1.4]{DPDro:20}.

The set $\t$ is a finite collection of disjoint open Lipschitz polyhedra $T$ (the mesh cells), which is assumed to form a partition of the domain, that is $\overline{\Omega} = \bigcup_{T\in\t} \overline{T}$.
For all $T\in\t$, we let $h_T\defi\max_{\vec{x},\vec{y}\in\overline{T}}|\vec{x}-\vec{y}|$ denote the diameter of the cell $T$. We also let $h_\t$ be s.t.~$h_{\t\mid T}\defi h_T$ for all $T\in\t$, and we define the mesh size by $h_\d \defi \max_{T\in\t} h_T$.
In turn, the set $\f$ is a finite collection of disjoint connected subsets of $\overline{\Omega}$ (the mesh faces) such that, for all $F\in\f$,
\begin{enumerate}
  \item[(i)] $F$ is a relatively open, Lipschitz polygonal subset of an affine hyperplane, and
  \item[(ii)] either there are two distinct mesh cells $T^+,T^-\in\t$ s.t.~$\overline{F}\subseteq\partial T^+\cap\partial T^-$ ($F$ is then an interface), or there is a mesh cell $T\in\t$ s.t.~$\overline{F}\subseteq\partial T\cap\Gamma$ ($F$ is then a boundary face).
\end{enumerate}
The set of mesh faces is assumed to form a partition of the mesh skeleton, that is to satisfy $\bigcup_{T\in\t}\partial T=\bigcup_{F\in\f}\overline{F}$.
For all $F\in\f$, we let $h_F\defi\max_{\vec{x},\vec{y}\in\overline{F}}|\vec{x}-\vec{y}|$ denote the diameter of the face $F$.
Interfaces are collected in the set $\f^\circ$, whereas boundary faces are collected in the set $\f^\partial$.
For all $T\in\t$, we denote by $\f_T$ the subset of $\f$ which collects the mesh faces lying on the boundary of $T$, so that $\partial T=\bigcup_{F\in\f_T}\overline{F}$.
For all $T\in\t$, consistently with our notation so far, we let $\normal_{\partial T}:\partial T\to\Real^3$ denote the (almost everywhere defined) unit vector field normal to $\partial T$, pointing outward from $T$. For all $F\in\f_T$, we also let $\normal_{T,F}\defi\normal_{\partial T\mid F}$ be the (constant) unit vector normal to the hyperplane containing $F$, and pointing outward from $T$.
For all $F\in\f$, we define $\normal_F$ as the (constant) unit vector normal to $F$ such that either $\normal_F\defi\normal_{T^+,F}$ if $F\subset\partial T^+\cap\partial T^-\in\f^\circ$, or $\normal_F\defi\normal_{T,F}(=\normal_{\mid F})$ if $F\subset\partial T\cap\Gamma\in\f^\partial$. For further use, we also let, for all $T\in\t$ and all $F\in\f_T$, $\varepsilon_{T,F}\in\{-1,1\}$ be such that $\varepsilon_{T,F}\defi\normal_{T,F}{\cdot}\normal_F$.
Finally, for any $X\in\f\cup\t$, we let $\vec{x}_X\in\Real^3$ be some point inside $X$.

Whenever $\beta_1>0$, for all $i\in\{1,\ldots,\beta_1\}$, we assume that there exists a subset $\f^\circ_{\Sigma_i}$ of $\f^\circ$ such that $\overline{\Sigma_i}=\bigcup_{F\in\f^\circ_{\Sigma_i}}\overline{F}$, and for which $\normal_{F}=\normal_{\Sigma_i\mid F}$ for all $F\in\f^\circ_{\Sigma_i}$. This requirement ensures that we do associate the tag ``$+$'' to the side of $\hat{\Omega}$ (with respect to $\Sigma_i$) for which $\normal_{\Sigma_i}$ is outward, consistently with our assumption from Section~\ref{sse:top}. Notice that, since the cutting surfaces are piecewise planar, the set $\hat{\Omega}$ is indeed pseudo-Lipschitz. We also let $\f_{\Sigma}^\circ\defi\bigcup_{i=1}^{\beta_1}\f_{\Sigma_i}^\circ$.
When $\beta_2>0$, for all $j\in\{0,\ldots,\beta_2\}$, we let $\f^\partial_{\Gamma_j}$ denote the subset of $\f^\partial$ such that $\Gamma_j=\bigcup_{F\in\f^\partial_{\Gamma_j}}\overline{F}$.

At the discrete level, the parameter $\mu:\Omega\to[\mu_\flat,\mu_\sharp]$ introduced in~\eqref{eq:mu.c} is assumed to be piecewise constant over the partition $\t$ of the domain $\Omega$, and we let
\begin{equation} \label{eq:mu.d}
  \mu_\flat\leq\mu_T\defi\mu_{\mid T}\leq\mu_\sharp\qquad\forall T\in\t.
\end{equation}
The role of this assumption will be made clear in Sections~\ref{ssse:field.d} and~\ref{ssse:vecpot.d} (right before Theorems~\ref{th:field.esti} and~\ref{th:vecpot.esti}).

When studying asymptotic properties with respect to the mesh size, one has to adopt a measure of regularity for refined sequences of discretizations. Let us thus consider $(\d_n)_{n\in\Natu}$, a sequence of polyhedral discretizations $\d_n\defi(\t_n,\f_n)$ such that $h_{\d_n}$ tends to zero as $n$ goes to infinity. We classically follow~\cite[Def.~1.9]{DPDro:20}, in which regularity for refined mesh sequences is quantified through a uniform-in-$n$ parameter $\varrho\in(0,1)$ (the mesh regularity parameter).
In a nutshell, it is assumed that, for all $n\in\Natu$, there exists a matching tetrahedral subtessellation of $\t_n$, (i) which is uniformly-in-$n$ shape-regular, and (ii) whose elements have a diameter that is uniformly-in-$n$ comparable to the diameter of the mesh cell in $\t_n$ they belong to.
In what follows, we write $a\lesssim b$ (resp.~$a\gtrsim b$) in place of $a\leq Cb$ (resp.~$a\geq Cb$), if $C>0$ only depends on $\Omega$, on the mesh regularity parameter $\varrho$, and (if need be) on the underlying polynomial degree, but is independent of both $n$ (and thus $h_{\d_n}$) and $\mu$. When $a\lesssim b\lesssim a$, we simply write $a\eqsim b$.
In particular, for regular sequences $(\d_n)_{n\in\Natu}$ of discretizations, for all $n\in\Natu$ and $T\in\t_n$, there holds ${\rm card}(\f_T)\lesssim 1$, as well as $h_T\lesssim h_F\leq h_T$ for all $F\in\f_T$ (cf.~\cite[Lem.~1.12]{DPDro:20}).
Also, we assume that, for all $n\in\Natu$ and $X\in\f_n\cup\t_n$, the point $\vec{x}_X$ is the center of an $X$-inscribed disk/ball of radius $h_X\lesssim r_X\leq h_X$. This last assumption can always be satisfied for regular sequences of discretizations.

\subsection{Polynomial spaces}

For $\ell\in\Natu$ and $m\in\{2,3\}$, we let $\Poly^\ell_m$ denote the linear space of $m$-variate polynomials of total degree at most $\ell$, with the convention that $\Poly^0_m$ is identified to $\Real$, and that $\Poly^{-1}_m\defi\{0\}$.
For any $X\in\f\cup\t$, we let $\Poly^\ell(X)$ be the linear space spanned by the restrictions to $X$ of the polynomials in $\Poly^\ell_3$. For $X$ of Hausdorff dimension $m\in\{2,3\}$, $\Poly^\ell(X)$ is isomorphic to $\Poly^\ell_m$ (cf.~\cite[Prop.~1.23]{DPDro:20}). We let $\pi^{\ell}_{\Poly,X}$ denote the $\LL[X]$-orthogonal projector onto $\Poly^{\ell}(X)$. For convenience, we also set $\bPoly^\ell(X)\defi\Poly^\ell(X)^m$, that is, $\bPoly^\ell(F)=\Poly^\ell(F)^2$ for all $F\in\f$, and $\bPoly^\ell(T)=\Poly^\ell(T)^3$ for all $T\in\t$. We finally define $\vec{\pi}^{\ell}_{\bPoly,X}$ as the $\bLL[X]$-orthogonal projector onto $\bPoly^{\ell}(X)$.

For any $T\in\t$ and $\ell\in\Natu$, we define $\pGT{\ell}\defi\Grad\big(\Poly^{\ell+1}(T)\big)$, and its Koszul complement $\kGT{\ell}\defi\bPoly^{\ell-1}(T){\times}(\vec{x}-\vec{x}_T)$, both subspaces of $\bPoly^{\ell}(T)$.
The (non $\bLL[T]$-orthogonal) polynomial decomposition below is a by-product of the {\em homotopy formula} (cf.~\cite[Thm.~7.1]{Arnol:18}):
\begin{equation} \label{eq:decomp.T}
  \bPoly^{\ell}(T)=\pGT{\ell}\oplus\kGT{\ell}.
\end{equation}
In addition, by exactness of the polynomial de Rham complex, letting $\pRT{\ell}\defi\Curl\big(\bPoly^{\ell+1}(T)\big)$, the differential mapping $\Curl:\kGT{\ell}\to\pRT{\ell-1}$ is an isomorphism.

For any $F\in\f$ now, we let $H_F$ be the affine hyperplane containing $F$, that we orient according to the normal $\normal_F$. For any function $w:F\to\Real$, we let $\Grad_Fw:F\to\Real^2$ denote the (tangential) gradient of $w$.
We also let $\mathbf{rot}_F w:F\to\Real^2$ be such that $\mathbf{rot}_F w\defi\big(\Grad_Fw\big)^\perp$, where $\vec{z}^\perp$ is defined as the rotation of angle $-\pi/2$ of $\vec{z}$ in the oriented hyperplane $H_F$.
For $\ell\in\Natu$, we define $\pRF{\ell}\defi\mathbf{rot}_F\big(\Poly^{\ell+1}(F)\big)$, and its Koszul complement $\kRF{\ell}\defi\Poly^{\ell-1}(F)(\vec{x}-\vec{x}_F)$, both subspaces of $\bPoly^{\ell}(F)$, where, for $\vec{x}\in F$, $(\vec{x}-\vec{x}_F)\subset H_F$ is identified to its (two-dimensional) tangential counterpart.
By the homotopy formula, in that case again, the following (non $\bLL[F]$-orthogonal) polynomial decomposition holds true:
\begin{equation} \label{eq:decomp.F}
  \bPoly^\ell(F)=\pRF{\ell}\oplus\kRF{\ell}.
\end{equation}
For all $T\in\t$ and $F\in\f_T$, since for any $v:T\to\Real$ we have $\normal_F{\times}((\Grad v)_{\mid F}{\times}\normal_F)=\Grad_F(v_{\mid F})$ (identifying $\normal_F{\times}((\Grad v)_{\mid F}{\times}\normal_F)$ to its two-dimensional proxy), there holds
\begin{equation} \label{eq:rttr}
  \pGT{\ell}_{\mid F}{\times}\normal_F=\pRF{\ell},
\end{equation}
where vectors in $\pGT{\ell}_{\mid F}{\times}\normal_F$ are also identified to their tangential counterparts.
The above identity~\eqref{eq:rttr} provides a characterization for the rotated tangential traces of curl-free vector polynomials.
This characterization is instrumental in the design of stable and optimally consistent trimmed HHO methods (cf.~\cite[Rmk.~10]{LePit:25} for further insight).

Last, we introduce broken versions of the $3$-variate polynomial spaces $\Poly^\ell$ and $\bPoly^{\ell}$, namely $\Poly^\ell(\t)\defi\{v\in\LL\mid v_{\mid T}\in\Poly^\ell(T)\,\forall T\in\t\}$, and $\bPoly^\ell(\t)\defi\{\vec{v}\in\bLL\mid\vec{v}_{\mid T}\in\bPoly^\ell(T)\,\forall T\in\t\}$.
We classically define on $\Poly^\ell(\t)$ the broken gradient operator $\Grad_\t$, and on $\bPoly^{\ell}(\t)$ the broken rotational operator $\Curl_\t$.
We finally let $\ProjTd[\ell]$ (resp.~$\bProjTd[\ell]$) denote the $\LL$-orthogonal (resp.~$\bLL$-orthogonal) projector onto $\Poly^\ell(\t)$ (resp.~$\bPoly^{\ell}(\t)$).

\subsection{Hybrid spaces}

We introduce hybrid counterparts of the spaces $\Hcurl$ and $\Ho[\hat{\Omega}]$ (as well as of relevant subspaces thereof). The first discrete spaces will serve for the discretization of the magnetic variable, whereas the second will be related to the pressure-like variable (Lagrange multiplier).
Henceforth, let $k\in\Natu^\star$ denote a given polynomial degree.

\subsubsection{$\vec{H}(\Curl)$-like hybrid spaces}

\ifSISC
\paragraph{Global spaces}
\else
\paragraph{Global spaces:}
\fi

We define the following discrete counterpart of the space $\Hcurl$:
\begin{equation} \label{eq:dHcurl}
  \dHcurl \defi\left\{ \uvd \defi \Big((\vT)_{T\in\t},(\vFt)_{F\in\f}\Big)\st 
  \begin{alignedat}{2}
    \vT&\in \bPoly^{k}(T) &\quad& \forall T\in\t
    \\
    \vFt&\in \pQF{k} &\quad& \forall F\in\f
  \end{alignedat}
  \right\},
\end{equation}
where the (possibly trimmed) polynomial space $\pQF{k}$ shall satisfy
\begin{equation} \label{eq:pQF}
  \pRF{k}\subseteq\pQF{k}\subseteq\bPoly^{k}(F).
\end{equation}
The requirement $\pRF{k}\subseteq\pQF{k}$ is stability-related (cf.~\cite[Rmk.~10]{LePit:25} for further insight).
The space $\pQF{k}$ may also satisfy, whenever explicitly precised, the additional assumption
\begin{equation} \label{eq:pQF.bis}
  \bPoly^{k-1}(F)\subset\pQF{k}.
\end{equation}
We let $\bProjF$ denote the $\bLL[F]$-orthogonal projector onto $\pQF{k}$. In~\eqref{eq:dHcurl}, $\vFt$ stands for the rotated tangential trace of the (magnetic) variable.
We also introduce the subspace
\begin{equation} \label{eq:dHzcurl}
  \dHzcurl\defi\left\{\uvd\in\dHcurl\mid\vFt\equiv\vec{0}\;\forall F\in\f^{\partial}\right\},
\end{equation}
discrete counterpart of $\Hzcurl$.

\ifSISC
\paragraph{Local traits}
\else
\paragraph{Local traits:}
\fi

Given a cell $T\in\t$, we denote by $\dHcurlT$ the restriction of $\dHcurl$ to $T$, and by $\uvT\defi\big(\vT,(\vFt)_{F\in\f_T}\big)\in\dHcurlT$ the restriction of the generic element $\uvd\in\dHcurl$.
For $\uvd\in\dHcurl$, we also let $\vTd$ be the broken polynomial vector field in $\bPoly^k(\t)$ such that $\vec{v}_{\t\mid T} \defi \vT$ for all $T\in\t$.
Let us now define an $\vec{H}(\Curl)$-like hybrid semi-norm on $\dHcurlT$. We let, for all $\uvT\in\dHcurlT$,
\begin{equation} \label{eq:dHcurlT.sn}
  |\uvT|_{\Curl,T}^2\defi\|\Curl\vT\|_{0,T}^2+\sum_{F\in\f_T}h_F^{-1}\|\bProjF(\vec{v}_{T\mid F}{\times}\normal_F)-\vFt\|_{0,F}^2,
\end{equation}
where $\vec{v}_{T\mid F}{\times}\normal_F$ is identified to its tangential proxy.
In turn, at the global level, we classically set, for all $\uvd\in\dHcurl$, $|\uvd|_{\Curl,\d}^2\defi\sum_{T\in\t}|\uvT|_{\Curl,T}^2$.
Remark that, whenever $|\uvd|_{\Curl,\d}=0$ for some $\uvd\in\dHcurl$, then, by~\eqref{eq:rttr} and~\eqref{eq:pQF}, one has $\vTd\in\Hcurlz\cap\vec{\mathcal{G}}^k(\t)$.
We then define the rotational reconstruction operator. For any $T\in\t$, we let $\CT:\dHcurlT\to\bPoly^{k-1}(T)$ be the operator such that, for all $\uvT\in\dHcurlT$, $\CT(\uvT)\in\bPoly^{k-1}(T)$ is the unique solution to
\begin{equation} \label{eq:CT}
  \big(\CT(\uvT),\vec{z}\big)_T = (\vT,\Curl\vec{z})_T-\!\sum_{F\in\f_T}\!\varepsilon_{T,F}\big(\vFt,\normal_F{\times}(\vec{z}_{\mid F}{\times}\normal_F)\big)_F\qquad\forall\vec{z}\in\bPoly^{k-1}(T),
\end{equation}
where $\normal_F{\times}(\vec{z}_{\mid F}{\times}\normal_F)$ is identified to its tangential proxy.
At the global level, we let $\CTd:\dHcurl\to\bPoly^{k-1}(\t)$ be such that, for all $\uvd\in\dHcurl$, $\CTd(\uvd)_{\mid T}\defi\CT(\uvT)$ for all $T\in\t$.
It now remains to define an $\vec{H}(\Curl)$-like hybrid stabilizer. For any $T\in\t$, we introduce the following symmetric, positive semi-definite bilinear form: for all $\uvT,\uwT\in\dHcurlT$,
\begin{equation} \label{eq:dHcurlT.stab}
  S_{\Curl,T}(\uwT,\uvT)\defi\sum_{F\in\f_T}h_F^{-1}\big(\bProjF\big(\vec{w}_{T\mid F}{\times}\normal_F\big) - \wFt,\bProjF\big(\vec{v}_{T\mid F}{\times}\normal_F\big) - \vFt\big)_F.
\end{equation}
At the global level, we then classically let $S_{\Curl,\d}(\uwd,\uvd)\defi\sum_{T\in\t}S_{\Curl,T}(\uwT,\uvT)$ for all $\uvd,\uwd\in\dHcurl$.
Given $T\in\t$, under the assumption that $\pQF{k}$ (already satisfying~\eqref{eq:pQF}) additionally satisfies~\eqref{eq:pQF.bis}, it is an easy matter to prove that, for all $\uvT\in\dHcurlT$,
\begin{equation} \label{eq:CT.equiv}
  \|\CT(\uvT)\|_{0,T}^2+S_{\Curl,T}(\uvT,\uvT)\eqsim|\uvT|_{\Curl,T}^2.
\end{equation}

\ifSISC
\paragraph{Approximation}
\else
\paragraph{Approximation:}
\fi

Let $q>2$. Given $T\in\t$, we introduce the local reduction operator $\intHcurlT:\Hcurl[T]\cap\vec{L}^q(T)\to\dHcurlT$ such that, for any $\vec{v}\in\Hcurl[T]\cap\vec{L}^q(T)$,
\begin{equation} \label{eq:dHcurlT.int}
    \intHcurlT(\vec{v})\defi\left(\bProjT(\vec{v}),\big(\bProjF(\vec{v}_{\mid F}{\times}\normal_F)\big)_{F\in\f_T}\right),
\end{equation}
where, as now standard, $\vec{v}_{\mid F}{\times}\normal_F$ is identified to its (two-dimensional) tangential counterpart.
Following~\cite[Sec.~5.1]{ErnGu:22}, note that the regularity $\vec{v}\in\Hcurl[T]\cap\vec{L}^q(T)$ is sufficient to give a (weak) meaning to the face polynomial projections in~\eqref{eq:dHcurlT.int} (see also~\cite[Rmk.~11]{LePit:25}).
At the domain level, now, the global reduction operator $\intHcurl:\Hcurl\cap\vec{L}^q(\Omega)\to\dHcurl$ is defined so that, for any $\vec{v}\in\Hcurl\cap\vec{L}^q(\Omega)$, $\intHcurl(\vec{v})\defi\big(\big(\bProjT(\vec{v}_{\mid T})\big)_{T\in\t},\big(\bProjF(\vec{v}_{\mid F}{\times}\normal_F)\big)_{F\in\f}\big)$.
Remark that, since $\vec{v}\in\Hcurl$, the quantity $\vec{v}_{\mid F}{\times}\normal_F$ is single-valued at interfaces $F\in\f^\circ$.
Notice also that $\intHcurl\big(\Hzcurl\cap\vec{L}^q(\Omega)\big)\subset\dHzcurl$.
Given $T\in\t$, under the assumption that $\pQF{k}$ (already satisfying~\eqref{eq:pQF}) additionally satisfies~\eqref{eq:pQF.bis}, it is an easy matter to prove the following commutation property: for all $\vec{v}\in\Hcurl[T]\cap\vec{L}^q(T)$,
\begin{equation} \label{eq:dHcurlT.comm}
  \big(\CT\circ\intHcurlT\big)(\vec{v})=\bProjT[k-1](\Curl\vec{v}).
\end{equation}
\ifSISC
\else
Finally, without this time the need for the additional assumption~\eqref{eq:pQF.bis} on $\pQF{k}$, the following (optimal) polynomial consistency result holds true: for all $\vec{p}\in\bPoly^k(T)$,
\begin{equation*}
  S_{\Curl,T}\big(\intHcurlT(\vec{p}),\uvT\big)=0\qquad\forall\uvT\in\dHcurlT.
\end{equation*}
\fi

\subsubsection{$H^1$-like hybrid spaces}

\ifSISC
\paragraph{Global spaces}
\else
\paragraph{Global spaces:}
\fi

We define the following discrete counterpart of the space $\Ho[\hat{\Omega}]$:
\begin{equation} \label{eq:hdHo}
  \hdHo \defi\left\{ \uqd \defi \Big((\qT)_{T\in\t},(\qF)_{F\in\f\setminus\f^\circ_{\Sigma}},(\qF^+,\qF^-)_{F\in\f^\circ_{\Sigma}}\Big)\st 
  \begin{alignedat}{2}
    \qT&\in \Poly^{k-1}(T) &\quad& \forall T\in\t
    \\
    \qF&\in \Poly^{k}(F) &\quad& \forall F\in\f\setminus\f^\circ_{\Sigma}\\
    \qF^+,\qF^-&\in \Poly^{k}(F) &\quad& \forall F\in\f^\circ_{\Sigma}
  \end{alignedat}
  \right\},
\end{equation}
where we recall that $\f_{\Sigma}^\circ=\bigcup_{i=1}^{\beta_1}\f_{\Sigma_i}^\circ$, with $\f_{\Sigma_i}^\circ$ subset of $\f^\circ$ such that $\overline{\Sigma_i}=\bigcup_{F\in\f_{\Sigma_i}^\circ}\overline{F}$.
Here, for any $i\in\{1,\ldots,\beta_1\}$ and $F\in\f^\circ_{\Sigma_i}$, $\qF^+$ and $\qF^-$ stand for the traces on $F$ of the pressure-like variable defined, respectively, from the ``$+$'' and ``$-$'' sides of $\hat{\Omega}$ with respect to the cutting surface $\Sigma_i$. Mirroring~\eqref{eq:jump}, for $\uqd\in\hdHo$, we define its jump through $F\in\f^\circ_{\Sigma_i}$ by $\llbracket \uqd\rrbracket_F\defi\qF^+-\qF^-$.
We then introduce the following subspace of $\hdHo$:
\begin{equation} \label{eq:hdHSof}
  \hdHSof\defi\left\{\uqd\in\hdHo\st\exists\,(\sigma_i)\in\Real^{\beta_1},\,\llbracket\uqd\rrbracket_F\equiv\sigma_i\;\forall F\in\f^\circ_{\Sigma_i},\,i\in\{1,\ldots,\beta_1\}\right\},
\end{equation}
which is, modulo the zero-mean condition over $\hat{\Omega}$, an hybrid counterpart of the space $\HSo$ defined in~\eqref{eq:HSo}. Next, we define the following discrete counterpart of the space $\Ho$:
\begin{equation} \label{eq:dHo}
  \dHo\defi\left\{\uqd\defi\Big((\qT)_{T\in\t},(\qF)_{F\in\f}\Big)\st 
  \begin{alignedat}{2}
    \qT&\in \Poly^{k-1}(T) &\quad& \forall T\in\t
    \\
    \qF&\in \Poly^{k}(F) &\quad& \forall F\in\f
  \end{alignedat}
  \right\}.
\end{equation}
Remark that if $\beta_1=0$, then $\hdHSof=\hdHo=\dHo$.
Whenever $\beta_1>0$, the space $\dHo$ is isomorphic to the subspace of $\hdHSof$ such that $\llbracket\uqd\rrbracket_F\equiv 0$ for all $F\in\f^\circ_\Sigma$.
We may now introduce the following subspaces of $\dHo$:
\begin{equation} \label{eq:dHzo}
  \dHzo\defi\left\{\uqd\in\dHo\st\qF\equiv 0\;\forall F\in\f^\partial_{\Gamma_0}\right\},
\end{equation}
as well as
\begin{equation} \label{eq:dHGo}
  \dHGo\defi\left\{\uqd\in\dHzo\st\exists\,(\gamma_j)\in\Real^{\beta_2},\,\qF\equiv\gamma_j\;\forall F\in\f^\partial_{\Gamma_j},\,j\in\{1,\ldots,\beta_2\}\right\}.
\end{equation}
The space $\dHGo$ is the hybrid counterpart of the space $\HGo$ in~\eqref{eq:HGo}. Remark that, whenever $\beta_2=0$, then $\dHGo=\dHzo$ with $\dHzo$ being, in that case, the hybrid counterpart of $\Hzo$.

\ifSISC
\paragraph{Local traits}
\else
\paragraph{Local traits:}
\fi

Given a cell $T\in\t$, we denote by $\dHoT$ the restriction of $\hdHo$ or $\dHo$ to $T$, and by $\uqT\defi\big(\qT,(q_{F,T})_{F\in\f_T}\big)\in\dHoT$ the restriction of the generic element $\uqd$ in $\hdHo$ or $\dHo$. For $\uqd\in\dHo$, $q_{F,T}\defi\qF$ for all $F\in\f_T$, whereas for $\uqd\in\hdHo$,
\begin{itemize}
  \item[$\bullet$] if $F\in\f_T\setminus\f^\circ_{\Sigma}$, then $q_{F,T}\defi\qF$;
  \item[$\bullet$] if $F\in\f_T\cap\f^\circ_{\Sigma}$ with $F\subset\partial T^+\cap\partial T^-$, then $q_{F,T}\defi\qF^+$ if $T=T^+$, whereas $q_{F,T}\defi\qF^-$ if $T=T^-$.
\end{itemize}
For $\uqd$ in $\hdHo$ or $\dHo$, we also let $\qTd$ be the broken polynomial function in $\Poly^{k-1}(\t)$ such that $q_{\t\mid T} \defi \qT$ for all $T\in\t$.
Let us now define an ($L^2$-scaled) $H^1$-like hybrid semi-norm on $\dHoT$. We let, for all $\uqT\in\dHoT$,
\begin{equation} \label{eq:dHoT.sn}
  |\uqT|_{\Grad,T}^2\defi h_T^2\|\Grad\qT\|_{0,T}^2+\sum_{F\in\f_T}h_F\|q_{T\mid F}-q_{F,T}\|_{0,F}^2.
\end{equation}
In turn, at the global level, for all $\uqd$ in $\hdHo$ or $\dHo$, we set $|\uqd|_{\Grad,\d}^2\defi\sum_{T\in\t}|\uqT|_{\Grad,T}^2$.
Remark that, whenever $|\uqd|_{\Grad,\d}=0$ for some $\uqd$ in $\hdHo$ or $\dHo$, then there exists $c\in\Real$ such that $\uqd\cong c\underline{1}_\d$, with $\underline{1}_\d$ the vector of $\dHo$ whose components are all equal to one.
We now define the gradient reconstruction operator. For any $T\in\t$, we let $\GT:\dHoT\to\bPoly^{k}(T)$ be the operator such that, for all $\uqT\in\dHoT$, $\GT(\uqT)\in\bPoly^{k}(T)$ is the unique solution to
\begin{equation} \label{eq:GT}
  \big(\GT(\uqT),\vec{z}\big)_T = -(\qT,\Div\vec{z})_T+\!\sum_{F\in\f_T}\big(q_{F,T},\vec{z}_{\mid F}{\cdot}\normal_{T,F}\big)_F\qquad\forall\vec{z}\in\bPoly^{k}(T).
\end{equation}
At the global level, we let $\GTd:\big\{\hdHo,\dHo\big\}\to\bPoly^{k}(\t)$ be such that, for all $\uqd$ in $\hdHo$ or $\dHo$, $\GTd(\uqd)_{\mid T}\defi\GT(\uqT)$ for all $T\in\t$.
It now remains to define an ($L^2$-scaled) $H^1$-like hybrid stabilizer. For any $T\in\t$, we introduce the following symmetric, positive semi-definite bilinear form: for all $\uqT,\urT\in\dHoT$,
\begin{equation} \label{eq:dHoT.stab}
  S_{\Grad,T}(\urT,\uqT)\defi\sum_{F\in\f_T}h_F\big(r_{T\mid F}-r_{F,T},q_{T\mid F}-q_{F,T}\big)_F.
\end{equation}
At the global level, we then set $S_{\Grad,\d}(\urd,\uqd)\defi\sum_{T\in\t}S_{\Grad,T}(\urT,\uqT)$ for all $\uqd,\urd$ in $\hdHo$ or $\dHo$.
Given $T\in\t$, it is an easy matter to prove that, for all $\uqT\in\dHoT$,
\begin{equation} \label{eq:GT.equiv}
  h_T^2\|\GT(\uqT)\|_{0,T}^2+S_{\Grad,T}(\uqT,\uqT)\eqsim|\uqT|_{\Grad,T}^2.
\end{equation}
Besides, under the assumption that $T$ belongs to a tetrahedral mesh sequence, by~\cite[Lem.~1]{CDPLe:22} (and adapting the arguments from the proof of~\cite[Lem.~3.2]{CEPig:21}), the following important equivalence holds true: for all $\uqT\in\dHoT$,
\begin{equation} \label{eq:GT.coer}
  h_T^2\|\GT(\uqT)\|_{0,T}^2\eqsim|\uqT|_{\Grad,T}^2.
\end{equation}

\ifSISC
\paragraph{Approximation}
\else
\paragraph{Approximation:}
\fi

Given $T\in\t$, we introduce the local reduction operator $\intHoT:\Ho[T]\to\dHoT$ such that, for any $q\in\Ho[T]$,
\begin{equation} \label{eq:dHoT.int}
    \intHoT(q)\defi\left(\ProjT(q),\big(\ProjF(q_{\mid F})\big)_{F\in\f_T}\right).
\end{equation}
At the domain level, now, the global reduction operator $\intHo:\big\{\Ho[\hat{\Omega}],\Ho\big\}\to\big\{\hdHo,\dHo\big\}$ is defined in the following way:
\begin{itemize}
  \item[$\bullet$] for any $q\in\Ho[\hat{\Omega}]$,
    \begin{equation*}
      \intHo(q)\defi\left(\big(\ProjT(q_{\mid T})\big)_{T\in\t},\big(\ProjF(q_{\mid F})\big)_{F\in\f\setminus\f^\circ_{\Sigma}},\big(\ProjF(q^+_{\mid F}),\ProjF(q^-_{\mid F})\big)_{F\in\f^\circ_{\Sigma}}\right),
    \end{equation*}
    where, for $F\in\f^\circ_{\Sigma}$ such that $F\subset\partial T^+\cap\partial T^-$, we have let $q^+\defi q_{\mid T^+}$ and $q^-\defi q_{\mid T^-}$;
  \item[$\bullet$] for any $q\in\Ho$, $\intHo(q)\defi\big(\big(\ProjT(q_{\mid T})\big)_{T\in\t},\big(\ProjF(q_{\mid F})\big)_{F\in\f}\big)$.
\end{itemize}
Remark that, for $F\in\f^\circ$ with $F\subset\partial T^+\cap\partial T^-$, whenever $q\in H^1\big(\overline{T^+}\cup\overline{T^-}\big)$, then the quantity $q_{\mid F}$ is single-valued.
Note, in addition, that $\intHo\big(\HSo\big)\subset\hdHSof$ and that $\intHo\big(\HGo\big)\subset\dHGo$.
Finally, given $T\in\t$, the following commutation property is valid: for all $q\in\Ho[T]$,
\begin{equation} \label{eq:dHoT.comm}
  \big(\GT\circ\intHoT\big)(q)=\bProjT(\Grad q).
\end{equation}
\ifSISC
\else
Also, the following polynomial consistency result holds true: for all $p\in\Poly^{k-1}(T)$,
\begin{equation*}
  S_{\Grad,T}\big(\intHoT(p),\uqT\big)=0\qquad\forall\uqT\in\dHoT.
\end{equation*}
\fi

\section{HHO methods} \label{se:hho}

We consider two models of magnetostatics: (i) the (first-order) field formulation, endowed with normal boundary conditions, and (ii) the (second-order) vector potential formulation, endowed with tangential boundary conditions. This last model was already considered in~\cite[Sec.~3.2]{CDPLe:22}, but therein for trivial topology only. These two problems are equivalent. They correspond to the primal instance (that is, based on the primal de Rham complex) of the model problems~\eqref{eq:prob.fo} and~\eqref{eq:prob.so}. The dual instance could be handled similarly.

\subsection{Magnetostatics models}

Recall the definitions~\eqref{eq:HGo} and~\eqref{eq:HSo} of the spaces $\HGo$ and $\HSo$.
Henceforth, the (variable) coefficient $\mu$, satisfying~\eqref{eq:mu.d}, will denote the magnetic permeability of the medium.

\subsubsection{Field formulation} \label{ssse:field}

Let $\vec{j}:\Omega\to\Real^3$ be a given electric current density satisfying the following compatibility conditions:
\begin{equation} \label{eq:comp.str}
  \Div\vec{j}=0\;\text{in $\Omega$},\qquad\langle\vec{j}_{\mid\Gamma_j}{\cdot}\normal,1\rangle_{\Gamma_j}=0\;\text{for all $j\in\{1,\ldots,\beta_2\}$}.
\end{equation}
We seek the magnetic field $\vec{h}:\Omega\to\Real^3$ such that
\begin{subequations}
  \label{eq:field.str}
  \begin{alignat}{2}
    \Curl \vec{h} &= \vec{j} &\qquad&\text{in $\Omega$},\label{eq:field.str.a}
    \\
    \Div \vec{b} &= 0 &\qquad&\text{in $\Omega$},\label{eq:field.str.b}
	\\
    \vec{b}_{\mid\Gamma}{\cdot}\normal  &=  0 &\qquad&\text{on $\Gamma$},\label{eq:field.str.c}
    \\
    \langle\vec{b}_{\mid\Sigma_i}{\cdot}\normal_{\Sigma_i},1\rangle_{\Sigma_i}&=0&\qquad&\text{for all $i\in\{1,\ldots,\beta_1\}$},\label{eq:field.str.d}
  \end{alignat}
\end{subequations}
with constitutive law $\vec{b}=\mu\vec{h}$, where $\vec{b}:\Omega\to\Real^3$ represents the magnetic induction.

In what follows, we assume that $\vec{j}\in\bLL$. By~\eqref{eq:GradHGo}, remark that~\eqref{eq:comp.str} is equivalent to
\begin{equation} \label{eq:comp.wea}
  (\vec{j},\vec{z})_{\Omega}=0\qquad\forall\vec{z}\in\Grad\big(\HGo\big).
\end{equation}
In the spirit of Kikuchi~\cite{Kikuc:89}, we consider the following equivalent (cf.~Remark~\ref{re:wse} below) weak form for Problem~\eqref{eq:field.str}: Find $(\vec{h},p)\in\Hcurl\times\HSo$ s.t.
\begin{subequations}\label{eq:field.wea}
  \begin{alignat}{2}
    A(\vec{h},\vec{v}) + B(\vec{v},p) & = (\vec{j},\Curl \vec{v})_\Omega &\qquad&\forall \vec{v}\in\Hcurl,\label{eq:field.wea.a}
    \\
    -B(\vec{h},q) & = 0 &\qquad&\forall q\in\HSo,\label{eq:field.wea.b}
  \end{alignat}
\end{subequations}
with bilinear forms $A:\Hcurl\times\Hcurl\rightarrow\Real$, $B:\Hcurl\times\Ho[\hat{\Omega}]\rightarrow\Real$ set to
\begin{equation} \label{eq:field.bilfor}
  A(\vec{w},\vec{v}) \defi (\Curl \vec{w},\Curl \vec{v})_\Omega,
  \qquad
  B(\vec{w},q) \defi (\mu\vec{w},\check{\Grad q})_\Omega,
\end{equation}
where we recall that, for $q\in\Ho[\hat{\Omega}]$, $\check{\Grad}\,q$ is the continuation to $\bLL$ of $\Grad q\in\bLL[\hat{\Omega}]$.
The pressure-like variable $p$ is the Lagrange multiplier of the constraints~\eqref{eq:field.str.b}--\eqref{eq:field.str.d} on the magnetic induction $\vec{b}$.
Testing~\eqref{eq:field.wea.a} with $\vec{v}=\check{\Grad p}\in\check{\Grad}\big(H^1_{\Sigma}(\hat{\Omega})\big)\subset\Hcurlz$, and recalling that $p\in L^2_0(\hat{\Omega})$, one can actually infer that $p=0$ in $\hat{\Omega}$ (recall that $\hat{\Omega}$ is assumed to be connected).
The well-posedness of Problem~\eqref{eq:field.wea} is a direct consequence of the second Weber inequality, more precisely of~\cite[Rmk.~6]{LePit:25} with $\eta\defi\mu$ (combined with~\eqref{eq:field.wea.b} and~\eqref{eq:GradHSo}).

\begin{remark}[Weak-strong equivalence] \label{re:wse}
  Whereas it is clear that any solution to Problem~\eqref{eq:field.str} also solves Problem~\eqref{eq:field.wea}, the converse is less straightforward, in particular when it comes to retrieving~\eqref{eq:field.str.a}.
  To do so, one has to use the first Helmholtz--Hodge decomposition~\eqref{eq:helm1} (with $\mu\leftarrow 1$ therein), combine it to~\eqref{eq:GradHGo}, then recall the compatibility condition~\eqref{eq:comp.wea}, so as to infer that $\big(\Curl\vec{h}-\vec{j},\vec{z}\big)_{\Omega}=0$ for all $\vec{z}\in\bLL$, thereby yielding~\eqref{eq:field.str.a}.
\end{remark}

\begin{remark}[Regularity theory] \label{re:reg.field}
  Recall that $\Omega$ is a Lipschitz polyhedron.
  When the coefficient $\mu$ is globally smooth (which, under~\eqref{eq:mu.d}, amounts to assuming that $\mu$ is constant in $\Omega$), it is known (cf.~\cite[Thm.~2]{Costa:90} and~\cite[Prop.~3.7]{ABDGi:98}) that the solution to Problem~\eqref{eq:field.str} (and more generally to Problem~\eqref{eq:prob.fo}) satisfies $\vec{h}\in\vec{H}^s(\Omega)$ for some $s>\frac{1}{2}$, with $s\geq 1$ if $\Omega$ is convex.
  When the coefficient $\mu$ is piecewise smooth (here, under~\eqref{eq:mu.d}, piecewise constant), however, the solution $\vec{h}$ to Problem~\eqref{eq:field.str} (and, more generally, to Problem~\eqref{eq:prob.fo}) is only known (cf.~\cite{CoDaN:99,Jochm:99,BGLud:13}) to belong to $\vec{H}^s(\Omega)$ for some $s>0$.
\end{remark}

\subsubsection{Vector potential formulation} \label{ssse:vecpot}

Let $\vec{j}:\Omega\to\Real^3$ be a given electric current density complying with~\eqref{eq:comp.str}.
We seek the magnetic vector potential $\vec{a}:\Omega\to\Real^3$ such that
\begin{subequations}
  \label{eq:vecpot.str}
  \begin{alignat}{2}
    \Curl(\mu^{-1}\Curl \vec{a}) &= \vec{j} &\qquad&\text{in $\Omega$},\label{eq:vecpot.str.a}
    \\
    \Div \vec{a} &= 0 &\qquad&\text{in $\Omega$},\label{eq:vecpot.str.b}
	\\
    \vec{a}_{\mid\Gamma}{\times}\normal  &=  \vec{0} &\qquad&\text{on $\Gamma$},\label{eq:vecpot.str.c}
    \\
    \langle\vec{a}_{\mid\Gamma_j}{\cdot}\normal,1\rangle_{\Gamma_j}&=0&\qquad&\text{for all $j\in\{1,\ldots,\beta_2\}$}.\label{eq:vecpot.str.d}
  \end{alignat}
\end{subequations}

In what follows, we assume that $\vec{j}\in\bLL$. Recall that~\eqref{eq:comp.str} is equivalent to~\eqref{eq:comp.wea}.
We consider the following weak form for Problem~\eqref{eq:vecpot.str}: Find $(\vec{a},p)\in\Hzcurl\times\HGo$ s.t.
\begin{subequations}\label{eq:vecpot.wea}
	\begin{alignat}{2}
		A(\vec{a},\vec{v}) + B(\vec{v},p) & = (\vec{j},\vec{v})_\Omega &\qquad&\forall \vec{v}\in\Hzcurl,\label{eq:vecpot.wea.a}
		\\
		-B(\vec{a},q) & = 0 &\qquad&\forall q\in\HGo,\label{eq:vecpot.wea.b}
	\end{alignat}
\end{subequations}
with bilinear forms $A:\Hcurl\times\Hcurl\rightarrow\Real$, $B:\Hcurl\times\Ho\rightarrow\Real$ set to
\begin{equation} \label{eq:vecpot.bilfor}
  A(\vec{w},\vec{v}) \defi (\mu^{-1}\Curl \vec{w},\Curl \vec{v})_\Omega,
  \qquad
  B(\vec{w},q) \defi (\vec{w},\Grad q)_\Omega.
\end{equation}
The pressure-like variable $p$ is the Lagrange multiplier of the constraints~\eqref{eq:vecpot.str.b} and~\eqref{eq:vecpot.str.d} on the vector potential $\vec{a}$.
Testing~\eqref{eq:vecpot.wea.a} with $\vec{v}=\Grad p\in\Grad\big(H^1_{\Gamma}(\Omega)\big)\subset\Hzcurlz$, using~\eqref{eq:comp.wea}, and recalling the definition~\eqref{eq:HGo}, one can actually infer that $p=0$ in $\Omega$. Consequently, Problems~\eqref{eq:vecpot.str} and~\eqref{eq:vecpot.wea} are equivalent (that is, weak-strong equivalence holds true).
The well-posedness of Problem~\eqref{eq:vecpot.wea} is a direct consequence of the first Weber inequality, more precisely of~\cite[Rmk.~3]{LePit:25} with $\eta\defi 1$ (combined with~\eqref{eq:vecpot.wea.b} and~\eqref{eq:GradHGo}).

\begin{remark}[Equivalence with Problem~\eqref{eq:field.str}]
  Whereas it is clear that, for any solution $\vec{a}$ to Problem~\eqref{eq:vecpot.str}, setting $\vec{b}\defi\Curl\vec{a}$, then $\vec{h}=\mu^{-1}\vec{b}$ is solution to Problem~\eqref{eq:field.str}, the converse is less obvious.
  It is a consequence of the fact that the equations~\eqref{eq:field.str.b}--\eqref{eq:field.str.d} imply, by the second Helmholtz--Hodge decomposition~\eqref{eq:helm2}, the existence of $\vec{a}\in\Hzcurl$ such that $\vec{h}=\mu^{-1}\Curl\vec{a}$.
  The latter vector potential is non-unique, which is the reason why one has to further impose a gauge condition.
  In our case, we impose the so-called Coulomb gauge, which amounts to enforcing~\eqref{eq:vecpot.str.b} and~\eqref{eq:vecpot.str.d}.
\end{remark}

\begin{remark}[Regularity theory] \label{re:reg.vecpot}
  Given that $\vec{h}=\mu^{-1}\Curl\vec{a}$ is solution to Problem~\eqref{eq:field.str}, we infer from Remark~\ref{re:reg.field} that $\mu^{-1}\Curl\vec{a}\in\vec{H}^r(\Omega)$, with $r>\frac{1}{2}$ if $\mu$ is globally smooth, or $r>0$ if $\mu$ is piecewise smooth. In turn, since $\vec{a}\in\Hzcurl\cap\vec{H}(\Div^0;\Omega)$, there holds $\vec{a}\in\vec{H}^s(\Omega)$ for some $s>\frac{1}{2}$. In particular, if $\mu$ is globally constant, both $\mu^{-1}\Curl\vec{a}$ and $\vec{a}$ belong to $\vec{H}^t(\Omega)$ for some $t>\frac{1}{2}$, with $t\geq 1$ if $\Omega$ is convex. In the general case, the solution to Problem~\eqref{eq:vecpot.str} (and, more generally, to Problem~\eqref{eq:prob.so}) satisfies $\vec{a}\in\vec{H}^s(\Omega)$ for some $s>\frac{1}{2}$, and $\mu^{-1}\Curl\vec{a}\in\vec{H}^r(\Omega)$ for some $r>0$.
\end{remark}

\subsection{HHO schemes} \label{sse:hho}

The HHO methods we devise are directly built upon the weak formulations~\eqref{eq:field.wea} and~\eqref{eq:vecpot.wea}. At the discrete level, we also heavily leverage the fact that the continuous Lagrange multipliers are equal to zero.

\subsubsection{Field formulation} \label{ssse:field.d}

Recall the definition~\eqref{eq:dHcurl} of the space $\dHcurl$, hybrid counterpart of $\Hcurl$, and assume that, for all $F\in\f$, the space $\pQF{k}$ is given by
\begin{equation} \label{eq:pQF.choice}
  \pQF{k}\defi\pRF{k}\oplus\kRF{k-1},
\end{equation}
in such a way that~\eqref{eq:pQF} is indeed fulfilled, and that~\eqref{eq:pQF.bis} additionally holds true.
Recall the definition~\eqref{eq:hdHSof} of the space $\hdHSof$. We introduce the following hybrid counterpart of $\HSo$:
\begin{equation} \label{eq:hdHSo}
  \hdHSo\defi\left\{\uqd\in\hdHSof\st\int_{\hat{\Omega}}\qTd=0\right\}.
\end{equation}
Remark that $\intHo\big(\HSo\big)\subset\hdHSo$, since functions in $\HSo$ also belong to $L^2_0(\hat{\Omega})$.

Given $T\in\t$, we introduce the local bilinear forms $A_T:\dHcurlT\times\dHcurlT\to\Real$, $B_T:\dHcurlT\times\dHoT\to\Real$, and $S_T:\dHoT\times\dHoT\to\Real$ defined by: for all $\uvT,\uwT\in\dHcurlT$, and $\uqT,\urT\in\dHoT$,
\begin{subequations}
  \label{eq:field.dbf}
  \begin{alignat}{1}
    A_T(\uwT,\uvT)&\defi\big(\CT(\uwT),\CT(\uvT)\big)_T+S_{\Curl,T}(\uwT,\uvT),\label{eq:field.dbf.A}
    \\
    B_T(\uwT,\uqT)&\defi\mu_T\big(\wT,\GT(\uqT)\big)_T,\label{eq:field.dbf.B}
    \\
    S_T(\urT,\uqT)&\defi \mu_T^2h_T^2\big(\Grad\rT,\Grad\qT\big)_T+\mu_T^2S_{\Grad,T}(\urT,\uqT),\label{eq:field.dbf.N}
  \end{alignat}
\end{subequations}
where the local stabilizers $S_{\Curl,T}$ and $S_{\Grad,T}$ are respectively defined by~\eqref{eq:dHcurlT.stab} and~\eqref{eq:dHoT.stab}.
The scaling $\mu_T^2$ (also $h_T^2$) in~\eqref{eq:field.dbf.N} is chosen so as to restore consistency between the physical units.
As far as the bilinear contribution $S_T$ is concerned, its role  will be discussed in Remark~\ref{re:tet} below.
As now standard, the global bilinear forms $A_\d:\dHcurl\times\dHcurl\to\Real$, $B_\d:\dHcurl\times\hdHo\to\Real$, and $S_\d:\hdHo\times\hdHo\to\Real$ are assembled summing the local contributions.
The discrete HHO problem reads: find $(\uhd,\upd)\in\dHcurl\times\hdHSo$ s.t.
\begin{subequations}\label{eq:field.dis}
  \begin{alignat}{2}
    A_\d(\uhd,\uvd) + B_\d(\uvd,\upd) & = \big(\vec{j},\CTd(\uvd)\big)_\Omega &\qquad&\forall \uvd\in\dHcurl,\label{eq:field.dis.a}
    \\
    -B_\d(\uhd,\uqd) + S_\d(\upd,\uqd) & = 0 &\qquad&\forall\uqd\in\hdHSo.\label{eq:field.dis.b}
  \end{alignat}
\end{subequations}
We now establish well-posedness for Problem~\eqref{eq:field.dis}.
\begin{lemma}[Well-posedness] \label{le:field.wp}
  For all $\uvd\in\dHcurl$ and $\uqd\in\hdHo$, there holds:
  \begin{equation} \label{eq:field.wp}
    |\uvd|_{\Curl,\d}^2+\sum_{T\in\t}\mu_T^2|\uqT|_{\Grad,T}^2\lesssim A_\d(\uvd,\uvd)+S_\d(\uqd,\uqd).
  \end{equation}
  As a consequence, Problem~\eqref{eq:field.dis} is well-posed.
\end{lemma}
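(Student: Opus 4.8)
The plan is to establish the coercivity-type estimate~\eqref{eq:field.wp} first, then deduce well-posedness by a standard inf-sup/saddle-point argument exploiting the fact that the Lagrange-multiplier block $S_\d$ is itself coercive on the multiplier space.

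\textit{Step 1: the lower bound~\eqref{eq:field.wp}.} For the first contribution, I would start from the definition~\eqref{eq:field.dbf.A} of $A_T$ and use the local equivalence~\eqref{eq:CT.equiv}, valid since the chosen face space~\eqref{eq:pQF.choice} satisfies both~\eqref{eq:pQF} and~\eqref{eq:pQF.bis}; summing over $T\in\t$ yields $\sum_{T\in\t}|\uvT|_{\Curl,T}^2\eqsim\sum_{T\in\t}\big(\|\CT(\uvT)\|_{0,T}^2+S_{\Curl,T}(\uvT,\uvT)\big)=A_\d(\uvd,\uvd)$, hence $|\uvd|_{\Curl,\d}^2\lesssim A_\d(\uvd,\uvd)$. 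For the second contribution, I would expand~\eqref{eq:field.dbf.N}: $S_T(\uqT,\uqT)=\mu_T^2\big(h_T^2\|\Grad\qT\|_{0,T}^2+S_{\Grad,T}(\uqT,\uqT)\big)$, which by the definition~\eqref{eq:dHoT.sn} of the $H^1$-like hybrid semi-norm and the definition~\eqref{eq:dHoT.stab} of $S_{\Grad,T}$ is \emph{exactly} $\mu_T^2|\uqT|_{\Grad,T}^2$. Summing over cells gives $\sum_{T\in\t}\mu_T^2|\uqT|_{\Grad,T}^2=S_\d(\uqd,\uqd)$. Adding the two bounds yields~\eqref{eq:field.wp}; in fact equality holds for the multiplier part, so no mesh-regularity input is even needed there.

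\textit{Step 2: well-posedness of the saddle-point system~\eqref{eq:field.dis}.} Since~\eqref{eq:field.dis} is a square linear system (finite-dimensional), it suffices to prove uniqueness, i.e.~that the only solution with $\vec{j}=\vec{0}$ is trivial. Taking $\uvd=\uhd$ in~\eqref{eq:field.dis.a} and $\uqd=\upd$ in~\eqref{eq:field.dis.b} and adding, the $B_\d$ terms cancel and one gets $A_\d(\uhd,\uhd)+S_\d(\upd,\upd)=0$. By~\eqref{eq:field.wp} this forces $|\uhd|_{\Curl,\d}=0$ and $|\upT|_{\Grad,T}=0$ for all $T\in\t$. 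From the remark following~\eqref{eq:dHoT.sn}, $|\upd|_{\Grad,\d}=0$ implies $\upd\cong c\,\underline{1}_\d$ for some $c\in\Real$; but $\upd\in\hdHSo$ enforces the zero-mean condition $\int_{\hat\Omega}\pTd=0$ (definition~\eqref{eq:hdHSo}), and also requires $\llbracket\upd\rrbracket_F$ to be the \emph{same} constant $\sigma_i$ across each cutting surface $\Sigma_i$ — a constant field has zero jumps, consistent with $c\,\underline{1}_\d\in\hdHSof$ — so the zero-mean condition yields $c=0$, i.e.~$\upd=\underline{0}_\d$. It remains to show $\uhd=\underline{0}_\d$. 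From $|\uhd|_{\Curl,\d}=0$, the remark following~\eqref{eq:dHcurlT.sn} (invoking~\eqref{eq:rttr} and~\eqref{eq:pQF}) gives $\hTd\in\Hcurlz\cap\vec{\mathcal{G}}^k(\t)$, meaning the broken field $\hTd$ is globally curl-conforming (its rotated tangential traces match across interfaces and vanish on the boundary — nothing here yet, but it is globally $\Hcurl$-conforming and curl-free) and cellwise a full gradient of a polynomial. I would then feed this information back into the momentum equation~\eqref{eq:field.dis.a} with $\upd=\underline{0}_\d$: $B_\d(\uvd,\underline{0}_\d)=0$, so $A_\d(\uhd,\uvd)=0$ for all $\uvd\in\dHcurl$; combined with $A_\d(\uhd,\uhd)=0$ already known, this is automatic. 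The crucial closing step is to use the \emph{second constraint}~\eqref{eq:field.dis.b} differently: with $\vec{j}=\vec{0}$ and $\upd=\underline{0}_\d$, \eqref{eq:field.dis.b} reads $B_\d(\uhd,\uqd)=0$ for all $\uqd\in\hdHSo$, i.e.~$\sum_{T\in\t}\mu_T(\hT,\GT(\uqT))_T=0$. Since $\hTd$ is, cellwise, a gradient $\GT(\intHoT(\cdot))$-type object and, globally, curl-free and conforming, I would argue — mirroring the continuous decomposition~\eqref{eq:GradHSo} at the discrete level — that the hybrid gradient operator applied to $\hdHSo$ recovers $\hTd$ up to the $\mu$-orthogonal harmonic complement, and then that $|\uhd|_{\Curl,\d}=0$ together with $\hdHzcurl$-membership pins $\hTd$ down to this harmonic piece, on which~\eqref{eq:field.dis.b} (tested against an appropriate $\uqd$) forces it to vanish. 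Collecting, $\uhd=\underline{0}_\d$.

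\textit{Main obstacle.} The delicate point is the very last argument of Step 2: showing that the kernel of the full system is trivial. The coercivity~\eqref{eq:field.wp} only controls the \emph{hybrid seminorms}, not full norms, so one cannot conclude $\uhd=\underline{0}_\d$ from $A_\d(\uhd,\uhd)=0$ alone — one genuinely needs the constraint equation~\eqref{eq:field.dis.b} and a discrete Helmholtz-type / Weber-type argument to kill the curl-free harmonic component of $\uhd$. This is exactly where the hybrid second Weber inequality of~\cite{LePit:25} (cf.~the reference to~\cite[Rmk.~6]{LePit:25} made just before Remark~\ref{re:wse}) should enter: it provides the missing control of $\hTd$ by $|\uhd|_{\Curl,\d}$ modulo the discrete harmonic space, and the constraint~\eqref{eq:field.dis.b}, tested against the discrete potentials in $\hdHSo$ mimicking~\eqref{eq:GradHSo}, removes that harmonic ambiguity. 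I expect the authors to cite that result directly; the routine parts (Step 1 and the algebraic cancellation in Step 2) are immediate.
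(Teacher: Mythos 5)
Your proposal is correct and follows essentially the same route as the paper: Step~1 is the identity/equivalence argument via~\eqref{eq:CT.equiv} and the exact relation $S_T(\uqT,\uqT)=\mu_T^2|\uqT|_{\Grad,T}^2$, and uniqueness is obtained exactly as you anticipate — energy test, the zero-mean condition on $\hdHSo$ to kill $\upd$, and then the second hybrid Weber inequality of~\cite{LePit:25} applied after deriving from~\eqref{eq:field.dis.b} that $\mu\hTd$ is orthogonal to $\check{\Grad}\big(\HSo\big)$ by testing with $\intHo(q)$, $q\in\HSo$. The one detail you gloss over (and which the paper highlights as the point where assumption~\eqref{eq:mu.d} is genuinely needed) is that passing from $B_\d(\uhd,\intHo(q))=0$ to $\big(\mu\hTd,\check{\Grad}q\big)_\Omega=0$ uses the commutation property~\eqref{eq:dHoT.comm} together with the piecewise constancy of $\mu$, so that $\mu\hTd\in\bPoly^k(\t)$ and the projector can be dropped; your intermediate remarks about $\hTd$ being a cellwise gradient and about $\dHzcurl$-membership are not part of (and not needed for) the actual argument.
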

\begin{proof}
  Let $\uvT\in\dHcurlT$ and $\uqT\in\dHoT$.
  Starting from definition~\eqref{eq:field.dbf.A}, and invoking~\eqref{eq:CT.equiv} (which is valid under assumption~\eqref{eq:pQF.choice}), there holds
  \begin{equation} \label{eq:field.AT}
    A_T(\uvT,\uvT)=\|\CT(\uvT)\|_{0,T}^2+S_{\Curl,T}(\uvT,\uvT)\eqsim|\uvT|_{\Curl,T}^2.
  \end{equation}
  In the same vein, by~\eqref{eq:field.dbf.N}, $S_T(\uqT,\uqT)=\mu_T^2|\uqT|_{\Grad,T}^2$.
  Summing over $T\in\t$ yields~\eqref{eq:field.wp}.

  Now, since the linear system corresponding to Problem~\eqref{eq:field.dis} is square, proving well-posedness is equivalent to proving the uniqueness of its solution. Let us show that, if $\vec{j}\equiv\vec{0}$, then necessarily $(\uhd,\upd)=\big(\underline{\vec{0}}_\d,\underline{0}_\d\big)$. First, remark that the map $|{\cdot}|_{\Grad,\d}$ (cf.~\eqref{eq:dHoT.sn}) is a norm on $\hdHSo$ (cf.~\eqref{eq:hdHSo}), as a by-product of the discrete zero-mean condition over $\hat{\Omega}$. Second, let us test Problem~\eqref{eq:field.dis} (for $\vec{j}\equiv\vec{0}$) with $\uvd=\uhd$ and $\uqd=\upd$. Summing~\eqref{eq:field.dis.a} and~\eqref{eq:field.dis.b}, and leveraging~\eqref{eq:field.wp}, we directly infer that $|\uhd|_{\Curl,\d}=0$ and $|\upd|_{\Grad,\d}=0$.
  From the second, since $\upd\in\hdHSo$, we deduce that $\upd=\underline{0}_\d$. Plugging this value into~\eqref{eq:field.dis.b}, we get $B_\d(\uhd,\uqd)=\big(\mu\hTd,\GTd(\uqd)\big)_{\Omega}=0$ for all $\uqd\in\hdHSo$.
  Since, for any $q\in\HSo$, $\intHo(q)\in\hdHSo$, letting $\uqd=\intHo(q)$ in the latter relation, and invoking the local commutation property~\eqref{eq:dHoT.comm}, together with $\mu\hTd\in\bPoly^k(\t)$ (as a by-product of~\eqref{eq:mu.d}), we infer
  \begin{equation} \label{eq:stabi}
    \big(\mu\hTd,\vec{z}\big)_{\Omega}=0\qquad\forall\vec{z}\in\check{\Grad}\big(\HSo\big).
  \end{equation}
  The conclusion then follows from the second hybrid Weber inequality of~\cite[Rmk.~16]{LePit:25} (with $\eta\defi\mu$) which, combined to~\eqref{eq:GradHSo}, yields $\|\mu^{\frac12}\hTd\|_{0,\Omega}\lesssim\mu_\sharp^{\frac12}|\uhd|_{\Curl,\d}(\lesssim\mu_\sharp^{\frac12}\|\vec{j}\|_{0,\Omega})$.
  Since $|\uhd|_{\Curl,\d}=0$, we infer that $\uhd=\underline{\vec{0}}_\d$, thereby concluding the proof.
\end{proof}
\noindent
Remark that the validity of the discrete orthogonality condition~\eqref{eq:stabi}, on which crucially hinges the stability of the method, requires assumption~\eqref{eq:mu.d} on the magnetic permeability to be fulfilled.
Last, we prove an error estimate in energy-norm for the solution to Problem~\eqref{eq:field.dis}.
\begin{theorem}[Energy-error estimate] \label{th:field.esti}
  Assume that $\vec{h}\in\Hcurl$ solution to Problem~\eqref{eq:field.wea} further satisfies $\vec{h}\in\vec{H}^s(\t)$ for some $s\in(1,k+1]$, and $\Curl\vec{h}\in\vec{H}^r(\t)$ for some $r\in(s-1,k]$. Then, the following estimate holds true:
  \begin{equation} \label{eq:field.esti}
    |\uhd-\intHcurl(\vec{h})|_{\Curl,\d}^2+\sum_{T\in\t}\mu_T^2|\upT|_{\Grad,T}^2\lesssim\sum_{T\in\t}\big(h_T^{2r}|\Curl\vec{h}|_{r,T}^2+h_T^{2(s-1)}|\vec{h}|_{s,T}^2\big),
  \end{equation}
  where $(\uhd,\upd)\in\dHcurl\times\hdHSo$ is the unique solution to Problem~\eqref{eq:field.dis}.
\end{theorem}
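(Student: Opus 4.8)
The plan is to run an energy argument anchored at the interpolate, leveraging the stability bound~\eqref{eq:field.wp} of Lemma~\ref{le:field.wp} and the fact that the continuous Lagrange multiplier $p$ vanishes, so that $\intHo(p)=\underline{0}_\d\in\hdHSo$ (recall $\intHo(\HSo)\subset\hdHSo$). I set $\uvd\defi\uhd-\intHcurl(\vec{h})\in\dHcurl$; since moreover $\upd=\upd-\intHo(p)\in\hdHSo$, the quantity to be controlled in~\eqref{eq:field.esti} is exactly $|\uvd|_{\Curl,\d}^2+\sum_{T\in\t}\mu_T^2|\upT|_{\Grad,T}^2$, and no triangle inequality will be needed. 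Testing~\eqref{eq:field.dis.a} with $\uvd$ and~\eqref{eq:field.dis.b} with $\upd$, subtracting on each side the same expression with $(\uhd,\upd)$ replaced by $(\intHcurl(\vec{h}),\intHo(p))$ (and using $\intHo(p)=\underline{0}_\d$, so that $B_\d(\uvd,\intHo(p))=0$ and $S_\d(\intHo(p),\upd)=0$), then summing the two resulting relations, I reach $A_\d(\uvd,\uvd)+S_\d(\upd,\upd)=\mathcal{E}_1(\uvd)+\mathcal{E}_2(\upd)$, where $\mathcal{E}_1(\uvd)\defi(\vec{j},\CTd(\uvd))_\Omega-A_\d(\intHcurl(\vec{h}),\uvd)$ and $\mathcal{E}_2(\uqd)\defi B_\d(\intHcurl(\vec{h}),\uqd)$ are the two consistency errors. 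Since~\eqref{eq:pQF.choice} forces~\eqref{eq:pQF.bis}, hence~\eqref{eq:CT.equiv} and~\eqref{eq:field.AT}, and since $S_\d(\upd,\upd)=\sum_{T\in\t}\mu_T^2|\upT|_{\Grad,T}^2$ by the definitions, the left-hand side is $\eqsim|\uvd|_{\Curl,\d}^2+\sum_{T\in\t}\mu_T^2|\upT|_{\Grad,T}^2$; it thus suffices to bound $|\mathcal{E}_1(\uvd)|$ and $|\mathcal{E}_2(\upd)|$ by the square root of the right-hand side of~\eqref{eq:field.esti} times $\big(|\uvd|_{\Curl,\d}^2+\sum_{T\in\t}\mu_T^2|\upT|_{\Grad,T}^2\big)^{1/2}$, and to conclude with Young's inequality.

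For $\mathcal{E}_1$, first I would use the weak-strong equivalence (Remark~\ref{re:wse}) to replace $\vec{j}$ by $\Curl\vec{h}$, then invoke the commutation property~\eqref{eq:dHcurlT.comm}, i.e.~$\CT(\intHcurlT(\vec{h}))=\bProjT[k-1](\Curl\vec{h})$ (licit since~\eqref{eq:pQF.choice} ensures~\eqref{eq:pQF.bis}, and since $\vec{h}\in\vec{H}^s(\t)$ with $s>1$ embeds locally into $\vec{L}^q$ for some $q>2$), together with $\CT(\uvT)\in\bPoly^{k-1}(T)$ and the definition~\eqref{eq:field.dbf.A} of $A_T$, to see that the curl-reconstruction contributions to $(\vec{j},\CTd(\uvd))_\Omega$ and to $A_\d(\intHcurl(\vec{h}),\uvd)$ cancel \emph{exactly}, leaving $\mathcal{E}_1(\uvd)=-S_{\Curl,\d}(\intHcurl(\vec{h}),\uvd)$. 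A Cauchy--Schwarz inequality on the symmetric positive semi-definite form $S_{\Curl,\d}$ reduces the matter to bounding $S_{\Curl,\d}(\intHcurl(\vec{h}),\intHcurl(\vec{h}))$; using the polynomial consistency of $S_{\Curl,T}$, $\vec{H}(\Curl)$-adapted trace inequalities for the tangential face traces, and the optimal approximation properties of $\bProjT$ and $\bProjF$ under $\vec{h}\in\vec{H}^s(\t)$ and $\Curl\vec{h}\in\vec{H}^r(\t)$, this is $\lesssim\sum_{T\in\t}\big(h_T^{2r}|\Curl\vec{h}|_{r,T}^2+h_T^{2(s-1)}|\vec{h}|_{s,T}^2\big)$, as required.

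The estimate on $\mathcal{E}_2$ is the crux: it is an adjoint-consistency bound for the mixed form $B_\d$. Starting from $\mathcal{E}_2(\uqd)=\sum_{T\in\t}\mu_T\big(\bProjT(\vec{h}),\GT(\uqT)\big)_T$ (by~\eqref{eq:field.dbf.B} and the definition of $\intHcurl$) and using $\GT(\uqT)\in\bPoly^k(T)$ to move the projector, one expands $\GT$ through its defining relation~\eqref{eq:GT} and integrates by parts cell-by-cell. The key ingredients are then: $\Div(\mu\vec{h})=0$ on each $T\in\t$ (from~\eqref{eq:field.str.b} and the piecewise-constancy~\eqref{eq:mu.d} of $\mu$), which removes the volumetric terms; $(\mu\vec{h})_{\mid\Gamma}{\cdot}\normal=0$ (from~\eqref{eq:field.str.c}), which kills the boundary-face terms; and $\langle(\mu\vec{h})_{\mid\Sigma_i}{\cdot}\normal_{\Sigma_i},1\rangle_{\Sigma_i}=0$ for every cutting surface (from~\eqref{eq:field.str.d}), which — combined with the constant-jump structure of $\hdHSo$ across the faces of $\f^\circ_{\Sigma_i}$ and the single-valuedness of $(\mu\vec{h}){\cdot}\normal_F$ at interfaces — kills the cutting-surface terms. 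After these cancellations, $\mathcal{E}_2(\uqd)$ collapses to a sum of face residuals of the type $\mu_T\big((\vec{h}-\bProjT(\vec{h})){\cdot}\normal_{T,F},\,q_{T\mid F}-q_{F,T}\big)_F$, which a Cauchy--Schwarz inequality, a discrete trace inequality, and the approximation properties of $\bProjT$ bound by $\big(\sum_{T\in\t}h_T^{2(s-1)}|\vec{h}|_{s,T}^2\big)^{1/2}\big(\sum_{T\in\t}\mu_T^2|\uqT|_{\Grad,T}^2\big)^{1/2}$, the weight $h_F^{1/2}\|q_{T\mid F}-q_{F,T}\|_{0,F}$ being precisely what enters the hybrid seminorm~\eqref{eq:dHoT.sn} controlling $S_\d(\upd,\upd)$. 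The main obstacle is exactly this bookkeeping of the interface/boundary/cutting-surface cancellations on a domain of general topology, and the verification that it is the piecewise-constancy~\eqref{eq:mu.d} of $\mu$ that lets the discrete counterpart of $B(\vec{h},\cdot)=0$ survive up to approximation terms (mirroring the role of~\eqref{eq:mu.d} emphasized right after the proof of Lemma~\ref{le:field.wp}).
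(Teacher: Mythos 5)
Your proposal is correct and follows essentially the same route as the paper's proof: an energy argument anchored at $\intHcurl(\vec{h})$ (using $p\equiv 0$, so $\intHo(p)=\underline{0}_\d$), coercivity from Lemma~\ref{le:field.wp}, weak--strong equivalence plus the commutation property~\eqref{eq:dHcurlT.comm} for the consistency error in $A_\d$, and the same integration-by-parts bookkeeping (elementwise $\Div(\mu\vec{h})=0$, the normal boundary condition, single-valued normal traces, and constant jumps paired with~\eqref{eq:field.str.d}) for the consistency error in $B_\d$. The only deviation --- harmless, and in fact slightly sharper --- is that you note the volumetric part of $\mathcal{E}_1$ vanishes exactly by $L^2$-orthogonality of $\bProjTd[k-1]$ against $\CTd(\uvd)\in\bPoly^{k-1}(\t)$, whereas the paper retains the term $\big(\Curl\vec{h}-(\CTd\circ\intHcurl)(\vec{h}),\CTd(\uvd)\big)_\Omega$ and bounds it via the fractional approximation results.
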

\begin{proof}
  First, by Sobolev embedding, remark that since $\vec{h}\in\vec{H}^s(\t)$ for some $s>1$ ($s>0$ would be enough), then $\vec{h}\in\vec{L}^q(\Omega)$ for some $q>2$, and one can thus give a meaning to $\intHcurl(\vec{h})\in\dHcurl$. Let us set $|(\uwd,\urd)|_\d^2\defi A_\d(\uwd,\uwd)+S_\d(\urd,\urd)$.
  The starting point of the proof is the following simple inequality: for all $(\uwd,\urd)\in\dHcurl\times\hdHSo$,
  \[|(\uwd,\urd)|_\d\leq\max_{\overset{(\uvd,\uqd)\in\dHcurl\times\hdHSo,}{|(\uvd,\uqd)|_\d=1}}A_\d(\uwd,\uvd)+B_\d(\uvd,\urd)-B_\d(\uwd,\uqd)+S_\d(\urd,\uqd).\] 
  Letting then $\uwd=\big(\uhd-\intHcurl(\vec{h})\big)\in\dHcurl$ and $\urd=\big(\upd-\intHo(p)\big)=\upd\in\hdHSo$ (it is recalled that $p\equiv 0$), developping the expression into the max, and using Problem~\eqref{eq:field.dis}, yields
  \[\left(|\uhd-\intHcurl(\vec{h})|_{\Curl,\d}^2+\sum_{T\in\t}\mu_T^2|\upT|_{\Grad,T}^2\right)^{\nicefrac12}\lesssim\max_{\overset{(\uvd,\uqd)\in\dHcurl\times\hdHSo,}{|(\uvd,\uqd)|_\d=1}}{\cal E}_\d\big((\uvd,\uqd)\big),\]
  where we have also utilized the estimate~\eqref{eq:field.wp} in the left-hand side, and set ${\cal E}_\d\big((\uvd,\uqd)\big)\defi\big(\vec{j},\CTd(\uvd)\big)_\Omega-A_\d\big(\intHcurl(\vec{h}),\uvd\big)+B_\d\big(\intHcurl(\vec{h}),\uqd\big)$.
  Let us define
  \begin{align*}
    \mathfrak{T}_1\defi&\big(\vec{j},\CTd(\uvd)\big)_\Omega-A_\d\big(\intHcurl(\vec{h}),\uvd\big)\\=&\big(\Curl\vec{h}-\big(\CTd\circ\intHcurl\big)(\vec{h}),\CTd(\uvd)\big)_{\Omega}-S_{\Curl,\d}\big(\intHcurl(\vec{h}),\uvd\big),
  \end{align*}
  and
  \[\mathfrak{T}_2\defi B_\d\big(\intHcurl(\vec{h}),\uqd\big)=\big(\bProjTd(\mu\vec{h}),\GTd(\uqd)\big)_{\Omega},\]
  where we have invoked~\eqref{eq:field.str.a} to replace $\vec{j}$ by $\Curl\vec{h}$ in $\mathfrak{T}_1$, and used the fact that $\mu$ is piecewise constant in $\mathfrak{T}_2$.
  For $\mathfrak{T}_1$, using (i) the local commutation property~\eqref{eq:dHcurlT.comm} along with the fact that $\Curl\vec{h}\in\vec{H}^r(\t)$, and (ii) the fact that $\vec{h}\in\vec{H}^s(\t)$ for some $s>1$ ($s>\frac12$ would be enough), so that the full trace of $\vec{h}$ can be given a meaning in $\LL[F]^3$ on each interface (on both sides) and boundary face, we infer from the fractional approximation results of~\cite[Lem.~2.5]{CEPig:21} that
  \[\mathfrak{T}_1\lesssim\left(\sum_{T\in\t}\big(h_T^{2r}|\Curl\vec{h}|_{r,T}^2+h_T^{2(s-1)}|\vec{h}|_{s,T}^2\big)\right)^{\nicefrac12}A_\d(\uvd,\uvd)^{\nicefrac12}.\]
  For $\mathfrak{T}_2$ now, using the definition~\eqref{eq:GT} of the (local) gradient reconstruction, and integrating by parts the volumetric term therein, we infer that
  \[\mathfrak{T}_2=\big(\mu\vec{h},\Grad_\t\!\qTd\big)_{\Omega}+\sum_{T\in\t}\sum_{F\in\f_T}\big(\bProjT(\mu_T\vec{h}_{\mid T}\!)_{\mid F}{\cdot}\normal_{T,F},q_{F,T}-q_{T\mid F}\big)_F,\]
  where we have invoked the fact that $\Grad_\t\!\qTd\in\bPoly^{k-2}(\t)$ to remove the projector in the first term.
  Integrating by parts (cell by cell) the first term in the right-hand side, and combining $\vec{h}\in\Hzdivz$ (owing to~\eqref{eq:field.str.b} and~\eqref{eq:field.str.c}) and $\uqd\in\hdHSo$ (cf.~\eqref{eq:hdHSo} and~\eqref{eq:hdHSof}), yields
  \ifSISC
  \begin{multline*}
    \mathfrak{T}_2=\sum_{T\in\t}\sum_{F\in\f_T}\varepsilon_{T,F}\big(\big(\bProjT(\mu_T\vec{h}_{\mid T}\!)_{\mid F}-(\mu\vec{h})_{\mid F}\big){\cdot}\normal_{F},q_{F,T}-q_{T\mid F}\big)_F\\+\sum_{i=1}^{\beta_1}\sum_{F\in\f^\circ_{\Sigma_i}}\sigma_i\big((\mu\vec{h})_{\mid F}{\cdot}\normal_F,1\big)_F,
  \end{multline*}
  \else
  \[\mathfrak{T}_2=\sum_{T\in\t}\sum_{F\in\f_T}\varepsilon_{T,F}\big(\big(\bProjT(\mu_T\vec{h}_{\mid T})_{\mid F}-(\mu\vec{h})_{\mid F}\big){\cdot}\normal_{F},q_{F,T}-q_{T\mid F}\big)_F+\sum_{i=1}^{\beta_1}\sum_{F\in\f^\circ_{\Sigma_i}}\sigma_i\big((\mu\vec{h})_{\mid F}{\cdot}\normal_F,1\big)_F,\]
  \fi
  where we have additionally leveraged that $\mu\vec{h}\in\vec{H}^s(\t)$ for some $s>1$ ($s>\frac{1}{2}$ would be sufficient) to give a standard meaning to its normal traces.
  \ifSISC
  Now, since, for any $i\in\{1,\ldots,\beta_1\}$, there holds $\sum_{F\in\f^\circ_{\Sigma_i}}\sigma_i\big((\mu\vec{h})_{\mid F}{\cdot}\normal_F,1\big)_F=\sigma_i\big((\mu\vec{h})_{\mid\Sigma_i}{\cdot}\normal_{\Sigma_i},1\big)_{\Sigma_i}$, we infer by~\eqref{eq:field.str.d} that $\mathfrak{T}_2=\sum_{T\in\t}\sum_{F\in\f_T}\varepsilon_{T,F}\big(\big(\bProjT(\mu_T\vec{h}_{\mid T}\!)_{\mid F}-(\mu\vec{h})_{\mid F}\big){\cdot}\normal_{F},q_{F,T}-q_{T\mid F}\big)_F$.
  \else
  Now, since, for any $i\in\{1,\ldots,\beta_1\}$, there holds $\sum_{F\in\f^\circ_{\Sigma_i}}\sigma_i\big((\mu\vec{h})_{\mid F}{\cdot}\normal_F,1\big)_F=\sigma_i\big((\mu\vec{h})_{\mid\Sigma_i}{\cdot}\normal_{\Sigma_i},1\big)_{\Sigma_i}$, we infer by~\eqref{eq:field.str.d} that
  \begin{equation*}
    \mathfrak{T}_2=\sum_{T\in\t}\sum_{F\in\f_T}\varepsilon_{T,F}\big(\big(\bProjT(\mu_T\vec{h}_{\mid T}\!)_{\mid F}-(\mu\vec{h})_{\mid F}\big){\cdot}\normal_{F},q_{F,T}-q_{T\mid F}\big)_F.
  \end{equation*}
  \fi
  The approximation results of~\cite[Lem.~2.5]{CEPig:21}, along with standard arguments, then show that
  \[\mathfrak{T}_2\lesssim\left(\sum_{T\in\t}h_T^{2(s-1)}|\vec{h}|_{s,T}^2\right)^{\nicefrac12}\left(\sum_{T\in\t}\mu_T^2S_{\Grad,T}(\uqT,\uqT)\right)^{\nicefrac12}.\]
  Since $\sum_{T\in\t}\mu_T^2S_{\Grad,T}(\uqT,\uqT)\leq S_\d(\uqd,\uqd)$, gathering the estimates on $\mathfrak{T}_1$ and $\mathfrak{T}_2$, and using that $|(\uvd,\uqd)|_\d=1$, we conclude the proof.
\end{proof}
\noindent
From the error estimate~\eqref{eq:field.esti}, we infer that the HHO scheme converges with optimal rate $h_\d^k$ when $\vec{h}\in\vec{H}^{k+1}(\t)$.
Before concluding this section, a few remarks are in order.
\begin{remark}[Low-regularity solutions and error measure] \label{re:reg}
  In order to yield convergence, the energy-error estimate from Theorem~\ref{th:field.esti} essentially requires that $\vec{h}\in\vec{H}^s(\t)$ for some $s>1$.
  By Remark~\ref{re:reg.field}, this regularity assumption on the solution is not met in general, even for a globally constant coefficient $\mu$ on a convex domain.
  Recovering convergence for any $s>0$ and $r\geq 0$ would require to consider a weaker error measure, morally on $h_\t\CTd$ instead of $\CTd$.
  This is in line with the fact that, for first-order models, the only meaningful error measure is the $L^2$-error on $\vec{h}$, which formally scales like $h_\t\CTd$.
  For $s\in(0,\frac12]$, the analysis would also necessitate to give a meaning to the face terms (adapting, for example, the arguments from~\cite[Sec.~5]{ErnGu:22}).
  We do not pursue herein further in this direction.
\end{remark}
\begin{remark}[Variant of the scheme] \label{re:variant}
  Since the model at hand is first-order, following~\cite[Sec.~3.1]{CDPLe:22}, it is also possible to define the discrete rotational operator as the broken rotational $\Curl_\t$, instead of $\CTd$.
  In practice, the new HHO scheme is obtained replacing in Problem~\eqref{eq:field.dis} every occurrence of $\CTd$ by $\Curl_\t$.
  The main advantage of doing so is that one can then relax the assumption~\eqref{eq:pQF.bis}, and consider in place of~\eqref{eq:pQF.choice} the leaner space $\pQF{k}\defi\pRF{k}$.
  This choice is actually sufficient (and necessary, thus optimal) for ensuring stability (cf.~\cite[Rmk.~10]{LePit:25}).
  Redefining $A_T(\uwT,\uvT)$, there now holds (in place of~\eqref{eq:field.AT}) that $A_T(\uvT,\uvT)=|\uvT|_{\Curl,T}^2$, and this identity holds true irrespectively of~\eqref{eq:pQF.bis}.
  There is a price to pay for this complexity reduction. The commutation property~\eqref{eq:dHcurlT.comm} is no longer valid in that case, and the proxy for $\Curl\vec{h}$ is then given by $\Curl_\t\!\big(\bProjTd(\vec{h})\big)$ in lieu of $\bProjTd[k-1](\Curl\vec{h})$. As supported by our numerical experiments (cf.~Section~\ref{sse:singular}), for singular solutions, this alternative proxy might unfortunately retain insufficient approximability properties.
\end{remark}
\begin{remark}[About stability] \label{re:tet}
  Assume $\t$ is a member of a tetrahedral mesh sequence. Then, as already pointed out in~\cite[Rmk.~3.1]{CDPLe:22}, the bilinear contribution $S_\d(\upd,\uqd)$ can be removed from Problem~\eqref{eq:field.dis} without compromising its stability.
  In the tetrahedral case,~\eqref{eq:GT.coer} indeed yields $\sum_{T\in\t}\mu_T^2|\uqT|_{\Grad,T}^2\lesssim\sum_{T\in\t}\mu_T^2h_T^2\|\GT(\uqT)\|_{0,T}^2$ for all $\uqd\in\hdHo$.
  Since the discrete saddle-point system enforces a control on the latter right-hand side (cf.~\cite[Lem.~3.6]{CDPLe:22}), this actually grants stability.
  When $\t$ belongs to a general mesh sequence,~\eqref{eq:GT.equiv} indicates that $\sum_{T\in\t}\mu_T^2S_{\Grad,T}(\uqT,\uqT)$ needs also be controlled to ensure stability.
  This motivates the adding of the bilinear term $S_\d(\upd,\uqd)$, embedding $\sum_{T\in\t}\mu_T^2S_{\Grad,T}(\upT,\uqT)$, to Problem~\eqref{eq:field.dis}.
  Remarkably, the continuous Lagrange multiplier being zero, any choice of $S_\d(\upd,\uqd)$ is consistent.
  We here define $S_\d$ from~\eqref{eq:field.dbf.N}, which also embeds a volumetric contribution, with the idea of improving stability for the discrete problem.
  This strategy may yield slightly more accurate results in some situations (cf.~\cite[Rmk.~3.2]{CDPLe:22}).
  Other choices of the same nature can be made for $S_\d$, see e.g.~\cite[Eq.~(3.4c)]{CDPLe:22}.
  The present remark extends to Problem~\eqref{eq:vecpot.dis} (vector potential case).
\end{remark}

\subsubsection{Vector potential formulation} \label{ssse:vecpot.d}

Recall the definition~\eqref{eq:dHzcurl} of $\dHzcurl$, hybrid counterpart of $\Hzcurl$, and assume that the space $\pQF{k}$ is still given by~\eqref{eq:pQF.choice} for all $F\in\f$.
Recall, in addition, the definition~\eqref{eq:dHGo} of $\dHGo$, hybrid counterpart of $\HGo$.

Given $T\in\t$, we introduce the local bilinear forms $A_T:\dHcurlT\times\dHcurlT\to\Real$, $B_T:\dHcurlT\times\dHoT\to\Real$, and $S_T:\dHoT\times\dHoT\to\Real$ defined by: for all $\uvT,\uwT\in\dHcurlT$, and $\uqT,\urT\in\dHoT$,
\begin{subequations}
  \label{eq:vecpot.dbf}
  \begin{alignat}{1}
    A_T(\uwT,\uvT)&\defi\mu_T^{-1}\big(\CT(\uwT),\CT(\uvT)\big)_T+\mu_T^{-1}S_{\Curl,T}(\uwT,\uvT),\label{eq:vecpot.dbf.A}
    \\
    B_T(\uwT,\uqT)&\defi\big(\wT,\GT(\uqT)\big)_T,\label{eq:vecpot.dbf.B}
    \\
    S_T(\urT,\uqT)&\defi \mu_Th_T^2\big(\Grad\rT,\Grad\qT\big)_T+\mu_TS_{\Grad,T}(\urT,\uqT),\label{eq:vecpot.dbf.N}
  \end{alignat}
\end{subequations}
where the local stabilizers $S_{\Curl,T}$ and $S_{\Grad,T}$ are respectively defined by~\eqref{eq:dHcurlT.stab} and~\eqref{eq:dHoT.stab}.
Here as well, the scaling $\mu_T$ (also $h_T^2$) in~\eqref{eq:vecpot.dbf.N} is chosen so as to restore consistency between the physical units.
In turn, the global bilinear forms $A_\d:\dHcurl\times\dHcurl\to\Real$, $B_\d:\dHcurl\times\dHo\to\Real$, and $S_\d:\dHo\times\dHo\to\Real$ are assembled by summing the local contributions.
The discrete HHO problem reads: Find $(\uad,\upd)\in\dHzcurl\times\dHGo$ s.t.
\begin{subequations}\label{eq:vecpot.dis}
  \begin{alignat}{2}
    A_\d(\uad,\uvd) + B_\d(\uvd,\upd) & = \big(\vec{j},\vTd\big)_\Omega &\qquad&\forall \uvd\in\dHzcurl,\label{eq:vecpot.dis.a}
    \\
    -B_\d(\uad,\uqd) + S_\d(\upd,\uqd) & = 0 &\qquad&\forall\uqd\in\dHGo.\label{eq:vecpot.dis.b}
  \end{alignat}
\end{subequations}
Let us establish well-posedness for Problem~\eqref{eq:vecpot.dis}.
\begin{lemma}[Well-posedness] \label{le:vecpot.wp}
  For all $\uvd\in\dHcurl$ and $\uqd\in\dHo$, there holds:
  \begin{equation} \label{eq:vecpot.wp}
    \sum_{T\in\t}\mu_T^{-1}|\uvT|_{\Curl,T}^2+\sum_{T\in\t}\mu_T|\uqT|_{\Grad,T}^2\lesssim A_\d(\uvd,\uvd)+S_\d(\uqd,\uqd).
  \end{equation}
  As a consequence, Problem~\eqref{eq:vecpot.dis} is well-posed.
\end{lemma}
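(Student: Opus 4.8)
The plan is to follow the proof of Lemma~\ref{le:field.wp}, adjusting the $\mu_T$-scalings. \emph{First}, I would establish the coercivity bound~\eqref{eq:vecpot.wp}. Fix $T\in\t$, $\uvT\in\dHcurlT$, and $\uqT\in\dHoT$. Under the choice~\eqref{eq:pQF.choice} (which enforces both~\eqref{eq:pQF} and~\eqref{eq:pQF.bis}), the norm equivalence~\eqref{eq:CT.equiv} holds, so that, by definition~\eqref{eq:vecpot.dbf.A}, $A_T(\uvT,\uvT)=\mu_T^{-1}\big(\|\CT(\uvT)\|_{0,T}^2+S_{\Curl,T}(\uvT,\uvT)\big)\eqsim\mu_T^{-1}|\uvT|_{\Curl,T}^2$; on the other hand, by~\eqref{eq:vecpot.dbf.N} and the very definition~\eqref{eq:dHoT.sn} of $|{\cdot}|_{\Grad,T}$, one has the exact identity $S_T(\uqT,\uqT)=\mu_T|\uqT|_{\Grad,T}^2$. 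Summing these relations over $T\in\t$ yields~\eqref{eq:vecpot.wp}.

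\emph{Second}, since the linear system associated with Problem~\eqref{eq:vecpot.dis} is square, well-posedness reduces to uniqueness, i.e.~to showing that $\vec{j}\equiv\vec{0}$ forces $(\uad,\upd)=(\underline{\vec{0}}_\d,\underline{0}_\d)$. I would first record that $|{\cdot}|_{\Grad,\d}$ is a norm on $\dHGo$: if $|\uqd|_{\Grad,\d}=0$ then $\uqd\cong c\,\underline{1}_\d$ for some $c\in\Real$, and the constraint $\qF\equiv 0$ on $\f^\partial_{\Gamma_0}$ built into $\dHGo\subset\dHzo$ (cf.~\eqref{eq:dHzo}, \eqref{eq:dHGo}) forces $c=0$ (note that $\f^\partial_{\Gamma_0}\neq\emptyset$). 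Testing~\eqref{eq:vecpot.dis.a} with $\uvd=\uad$ and~\eqref{eq:vecpot.dis.b} with $\uqd=\upd$, and summing, the mixed $B_\d$-terms cancel and, as $\vec{j}\equiv\vec{0}$, one gets $A_\d(\uad,\uad)+S_\d(\upd,\upd)=0$; invoking~\eqref{eq:vecpot.wp}, this forces $|\uad|_{\Curl,\d}=0$ and $|\upd|_{\Grad,\d}=0$, whence $\upd=\underline{0}_\d$ by the norm property just established.

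\emph{Third}, it remains to upgrade $|\uad|_{\Curl,\d}=0$ into $\uad=\underline{\vec{0}}_\d$. Plugging $\upd=\underline{0}_\d$ back into~\eqref{eq:vecpot.dis.b} gives $B_\d(\uad,\uqd)=\big(\aTd,\GTd(\uqd)\big)_\Omega=0$ for all $\uqd\in\dHGo$. Since $\intHo(\HGo)\subset\dHGo$, I would test this against $\uqd=\intHo(q)$ for arbitrary $q\in\HGo$; using the local commutation property~\eqref{eq:dHoT.comm} and $\aTd\in\bPoly^k(\t)$ (so that the projector can be dropped against $\aT$), this becomes $\big(\aTd,\vec{z}\big)_\Omega=0$ for all $\vec{z}\in\Grad\big(\HGo\big)$. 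Recognising, via the Helmholtz--Hodge identity~\eqref{eq:GradHGo}, that this is precisely the orthogonality hypothesis underpinning the first hybrid Weber inequality established in~\cite{LePit:25} (with $\eta\defi 1$; this is the $\Hzcurl$-type counterpart of the inequality of~\cite[Rmk.~16]{LePit:25} used for the field formulation), and recalling that $\uad\in\dHzcurl$, one concludes $\|\aTd\|_{0,\Omega}\lesssim|\uad|_{\Curl,\d}=0$. Hence $\aTd=\vec{0}$, and then~\eqref{eq:dHcurlT.sn} together with $|\uad|_{\Curl,\d}=0$ forces $\aFt=\bProjF(\vec{0})=\vec{0}$ for every $F\in\f$, i.e.~$\uad=\underline{\vec{0}}_\d$.

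\emph{Main obstacle.} Essentially the whole argument is routine bookkeeping mirroring Lemma~\ref{le:field.wp}; the only genuinely substantive step is the last one, namely correctly identifying --- through the decomposition~\eqref{eq:GradHGo} --- the discrete $L^2$-orthogonality that licenses the use of the (topologically generic) first hybrid Weber inequality of~\cite{LePit:25}. This is where the topological machinery enters, and it is the reason why $\dHGo$ (rather than merely $\dHzo$) must appear on the multiplier side.
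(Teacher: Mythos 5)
Your proposal is correct and follows essentially the same route as the paper's proof: the coercivity bound via \eqref{eq:CT.equiv} and the exact identity $S_T(\uqT,\uqT)=\mu_T|\uqT|_{\Grad,T}^2$, reduction of well-posedness to uniqueness for the square system, and the final step of testing against $\intHo(q)$ for $q\in\HGo$, invoking the commutation property \eqref{eq:dHoT.comm} and the first hybrid Weber inequality of~\cite{LePit:25} combined with \eqref{eq:GradHGo}. The only (immaterial) differences are that you spell out a few details the paper leaves implicit, such as why $c=0$ and why the face unknowns of $\uad$ vanish.
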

\begin{proof}
  Let $\uvT\in\dHcurlT$ and $\uqT\in\dHoT$.
  Starting from definition~\eqref{eq:vecpot.dbf.A}, and invoking~\eqref{eq:CT.equiv} (which is valid under assumption~\eqref{eq:pQF.choice}), there holds
  \begin{equation} \label{eq:vecpot.AT}
    A_T(\uvT,\uvT)=\mu_T^{-1}\|\CT(\uvT)\|_{0,T}^2+\mu_T^{-1}S_{\Curl,T}(\uvT,\uvT)\eqsim\mu_T^{-1}|\uvT|_{\Curl,T}^2.
  \end{equation}
  Similarly, by~\eqref{eq:vecpot.dbf.N}, $S_T(\uqT,\uqT)=\mu_T|\uqT|_{\Grad,T}^2$.
  Summing over $T\in\t$ yields~\eqref{eq:vecpot.wp}.

  Now, since the linear system corresponding to Problem~\eqref{eq:vecpot.dis} is square, proving well-posedness is equivalent to proving the uniqueness of its solution. Let us show that, if $\vec{j}\equiv\vec{0}$, then necessarily $(\uad,\upd)=\big(\underline{\vec{0}}_\d,\underline{0}_\d\big)$. First, remark that the map $|{\cdot}|_{\Grad,\d}$ (cf.~\eqref{eq:dHoT.sn}) is a norm on $\dHGo$ (cf.~\eqref{eq:dHzo} and~\eqref{eq:dHGo}), as a by-product of the discrete zero boundary condition on $\Gamma_0$. Now, let us test Problem~\eqref{eq:vecpot.dis} (for $\vec{j}\equiv\vec{0}$) with $\uvd=\uad$ and $\uqd=\upd$. Summing~\eqref{eq:vecpot.dis.a} and~\eqref{eq:vecpot.dis.b}, and leveraging~\eqref{eq:vecpot.wp}, we directly infer that $|\uad|_{\Curl,\d}=0$ and $|\upd|_{\Grad,\d}=0$.
  From the second, since $\upd\in\dHGo$, we deduce that $\upd=\underline{0}_\d$. Plugging this value into~\eqref{eq:vecpot.dis.b}, we get $B_\d(\uad,\uqd)=\big(\aTd,\GTd(\uqd)\big)_{\Omega}=0$ for all $\uqd\in\dHGo$.
  Since, for any $q\in\HGo$, $\intHo(q)\in\dHGo$, letting $\uqd=\intHo(q)$ in the latter relation, and leveraging the local commutation property~\eqref{eq:dHoT.comm}, along with $\aTd\in\bPoly^k(\t)$, we infer $\big(\aTd,\vec{z}\big)_{\Omega}=0$ for all $\vec{z}\in\Grad\big(\HGo\big)$.
  The conclusion then follows (recall that $\uad\in\dHzcurl$) from the first hybrid Weber inequality of~\cite[Rmk.~14]{LePit:25} (with $\eta\defi 1$) which, combined to~\eqref{eq:GradHGo}, yields $\|\aTd\|_{0,\Omega}\lesssim|\uad|_{\Curl,\d}(\lesssim\mu_\sharp\|\vec{j}\|_{0,\Omega})$. Since $|\uad|_{\Curl,\d}=0$, we infer $\uad=\underline{\vec{0}}_\d$.
\end{proof}
\noindent
Remark that, contrary to what held true for the field formulation, the stability of Problem~\eqref{eq:vecpot.dis} is not conditional to assumption~\eqref{eq:mu.d}.
In order to simplify the analysis below, we nonetheless stick to~\eqref{eq:mu.d}, and refer to Section~\ref{sse:variable} for a numerical assessment in the case of locally variable permeability.
We prove an error estimate in energy-norm for the solution to Problem~\eqref{eq:vecpot.dis}.
\begin{theorem}[Energy-error estimate] \label{th:vecpot.esti}
  Assume that the solution $\vec{a}\in\Hzcurl$ to Problem~\eqref{eq:vecpot.wea} further satisfies $\vec{a}\in\vec{H}^s(\t)$ for some $s\in(1,k+1]$, and $\Curl\vec{a}\in\vec{H}^r(\t)$ for some $r\in(\max(\frac{1}{2},s-1),k]$. Then, the following estimate holds true:
  \begin{equation} \label{eq:vecpot.esti}
    \sum_{T\in\t}\mu_T^{-1}|\uaT-\intHcurlT(\vec{a}_{\mid T})|_{\Curl,T}^2+\sum_{T\in\t}\mu_T|\upT|_{\Grad,T}^2\lesssim\sum_{T\in\t}\mu_T^{-1}\big(h_T^{2r}|\Curl\vec{a}|_{r,T}^2+h_T^{2(s-1)}|\vec{a}|_{s,T}^2\big),
  \end{equation}
  where $(\uad,\upd)\in\dHzcurl\times\dHGo$ is the unique solution to Problem~\eqref{eq:vecpot.dis}.
\end{theorem}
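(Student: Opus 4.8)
The plan is to follow the same third-Strang-lemma strategy as in the proof of Theorem~\ref{th:field.esti}, stressing only the two points where the argument genuinely differs (the datum is now tested against the cell component, and the bilinear forms carry $\mu$-weights in a different way). Since $\vec{a}\in\vec{H}^s(\t)$ with $s>1$, Sobolev embedding gives $\vec{a}\in\vec{L}^q(\Omega)$ for some $q>2$, so $\intHcurl(\vec{a})\in\dHzcurl$ is well-defined (it lies in $\dHzcurl$ because $\vec{a}\in\Hzcurl$). Setting $|(\uwd,\urd)|_\d^2\defi A_\d(\uwd,\uwd)+S_\d(\urd,\urd)$, I would specialize the inf-sup-type inequality $|(\uwd,\urd)|_\d\leq\max_{|(\uvd,\uqd)|_\d=1}\big(A_\d(\uwd,\uvd)+B_\d(\uvd,\urd)-B_\d(\uwd,\uqd)+S_\d(\urd,\uqd)\big)$ to $\uwd=\uad-\intHcurl(\vec{a})$ and $\urd=\upd-\intHo(p)=\upd$ (recall $p\equiv 0$), and invoke Problem~\eqref{eq:vecpot.dis} together with~\eqref{eq:vecpot.wp}. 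This bounds the square root of the left-hand side of~\eqref{eq:vecpot.esti} by $\max_{|(\uvd,\uqd)|_\d=1}\mathcal{E}_\d\big((\uvd,\uqd)\big)$, with $\mathcal{E}_\d\big((\uvd,\uqd)\big)\defi(\vec{j},\vTd)_\Omega-A_\d\big(\intHcurl(\vec{a}),\uvd\big)+B_\d\big(\intHcurl(\vec{a}),\uqd\big)\ifed\mathfrak{T}_1+\mathfrak{T}_2$.

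The term $\mathfrak{T}_2\defi B_\d\big(\intHcurl(\vec{a}),\uqd\big)=\big(\bProjTd(\vec{a}),\GTd(\uqd)\big)_\Omega$ is the ``dual'' of its analogue in the proof of Theorem~\ref{th:field.esti}, and I would treat it \emph{mutatis mutandis}: using the definition~\eqref{eq:GT} of $\GT$, integrating the volumetric term by parts (and dropping the projector, since $\Grad_\t\qTd$ has degree $\leq k-2$), then integrating by parts cell-by-cell and combining $\Div\vec{a}=0$ (cf.~\eqref{eq:vecpot.str.b}) with $\uqd\in\dHGo$ — whose face unknowns vanish on $\Gamma_0$ and are constant on each $\Gamma_j$, so that the $\Gamma_j$-flux contributions cancel owing to the compatibility condition~\eqref{eq:vecpot.str.d} — yields $\mathfrak{T}_2=\sum_{T\in\t}\sum_{F\in\f_T}\varepsilon_{T,F}\big((\bProjT(\vec{a}_{\mid T})_{\mid F}-\vec{a}_{\mid F}){\cdot}\normal_F,q_{F,T}-q_{T\mid F}\big)_F$. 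A Cauchy--Schwarz inequality (inserting $\mu_T^{-\nicefrac12}\mu_T^{\nicefrac12}$ cell-wise), the fractional trace-approximation estimates of~\cite[Lem.~2.5]{CEPig:21}, and $\sum_{T\in\t}\mu_T S_{\Grad,T}(\uqT,\uqT)\leq S_\d(\uqd,\uqd)$ then give $\mathfrak{T}_2\lesssim\big(\sum_{T\in\t}\mu_T^{-1}h_T^{2(s-1)}|\vec{a}|_{s,T}^2\big)^{\nicefrac12}S_\d(\uqd,\uqd)^{\nicefrac12}$.

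The genuinely new point is $\mathfrak{T}_1\defi(\vec{j},\vTd)_\Omega-A_\d\big(\intHcurl(\vec{a}),\uvd\big)$: since $\vec{j}$ is now tested against the cell component $\vTd$ rather than against a reconstructed rotational, a cell-wise integration by parts is needed. Setting $\vec{h}\defi\mu^{-1}\Curl\vec{a}\in\Hcurl$ (the solution to Problem~\eqref{eq:field.str}), one has $\vec{j}=\Curl\vec{h}$ by~\eqref{eq:vecpot.str.a}, so $(\vec{j},\vTd)_\Omega=\sum_{T\in\t}(\Curl\vec{h}_{\mid T},\vT)_T$ integrates by parts into volumetric terms $(\vec{h}_{\mid T},\Curl\vT)_T$ plus face terms carrying the tangential trace of $\vec{h}$. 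On the other side, the commutation property~\eqref{eq:dHcurlT.comm} (valid under~\eqref{eq:pQF.choice}), the fact that $\mu$ is piecewise constant, and the definition~\eqref{eq:CT} of $\CT$ tested with $\vec{z}=\bProjT[k-1](\vec{h}_{\mid T})$ rewrite $A_\d\big(\intHcurl(\vec{a}),\uvd\big)$ in terms of the same volumetric quantities, of $\sum_{T\in\t}\mu_T^{-1}S_{\Curl,T}\big(\intHcurlT(\vec{a}_{\mid T}),\uvT\big)$, and of face terms carrying $\vFt$ and the tangential trace of $\bProjT[k-1](\vec{h}_{\mid T})$. The volumetric contributions cancel ($\Curl\vT$ has degree $\leq k-1$ and $\bProjT[k-1]$ is $\bLL[T]$-orthogonal), and — crucially — using that $\vec{h}\in\Hcurl$ has single-valued tangential trace across interfaces while $\vFt\equiv\vec{0}$ on boundary faces (here $\uvd\in\dHzcurl$ is essential), the face terms carrying $\vec{h}$ itself telescope to zero, leaving $\mathfrak{T}_1=\sum_{T\in\t}\sum_{F\in\f_T}\varepsilon_{T,F}\big(\normal_F{\times}\big((\vec{h}_{\mid F}-\bProjT[k-1](\vec{h}_{\mid T})_{\mid F}){\times}\normal_F\big),\vec{v}_{T\mid F}{\times}\normal_F-\vFt\big)_F-\sum_{T\in\t}\mu_T^{-1}S_{\Curl,T}\big(\intHcurlT(\vec{a}_{\mid T}),\uvT\big)$, with tangential fields identified to their two-dimensional proxies as in the paper. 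The last sum is controlled by Cauchy--Schwarz, $\mu_T^{-1}S_{\Curl,T}\big(\intHcurlT(\vec{a}_{\mid T}),\intHcurlT(\vec{a}_{\mid T})\big)\lesssim\mu_T^{-1}h_T^{2(s-1)}|\vec{a}|_{s,T}^2$ (again~\cite[Lem.~2.5]{CEPig:21}), and $\sum_{T\in\t}\mu_T^{-1}S_{\Curl,T}(\uvT,\uvT)\lesssim A_\d(\uvd,\uvd)$.

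The face sum is the main obstacle. Pairing the trace error $\|\vec{h}_{\mid F}-\bProjT[k-1](\vec{h}_{\mid T})_{\mid F}\|_{0,F}\lesssim h_T^{r-\nicefrac12}\mu_T^{-1}|\Curl\vec{a}|_{r,T}$ naïvely against the $h_F^{-\nicefrac12}$-scaled hybrid quantities would only deliver the suboptimal rate $h_T^{r-1}$, because the seminorm~\eqref{eq:dHcurlT.sn} controls $\bProjF(\vec{v}_{T\mid F}{\times}\normal_F)-\vFt$ but not $\vec{v}_{T\mid F}{\times}\normal_F-\vFt$. To recover the optimal rate $h_T^r$, I would split $\vec{v}_{T\mid F}{\times}\normal_F-\vFt$ into its $\bLL[F]$-orthogonal projection onto $\pQF{k}$ and the complementary part: by the polynomial decomposition~\eqref{eq:decomp.T} and the trace characterization~\eqref{eq:rttr} (which give $\pGT{k}_{\mid F}{\times}\normal_F=\pRF{k}\subseteq\pQF{k}$), that complement is due solely to the $\kGT{k}$-component of $\vT$, whose $\bLL[T]$-norm is $\lesssim h_T\|\Curl\vT\|_{0,T}$ (the rotational being an isomorphism on $\kGT{k}$, up to scaling), so that a discrete trace inequality bounds the complement by $\lesssim h_T^{\nicefrac12}\|\Curl\vT\|_{0,T}\lesssim h_T^{\nicefrac12}\mu_T^{\nicefrac12}A_\d(\uvd,\uvd)^{\nicefrac12}$, supplying the missing power of $h_T$; the $\pQF{k}$-component pairs directly against $h_F^{-\nicefrac12}\|\bProjF(\vec{v}_{T\mid F}{\times}\normal_F)-\vFt\|_{0,F}\leq|\uvT|_{\Curl,T}$. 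Both contributions being $\lesssim h_T^{r}\mu_T^{-\nicefrac12}|\Curl\vec{a}|_{r,T}\,A_\d(\uvd,\uvd)^{\nicefrac12}$ cell-wise, summation (with ${\rm card}(\f_T)\lesssim 1$ and Cauchy--Schwarz over cells) gives $\mathfrak{T}_1\lesssim\big(\sum_{T\in\t}\mu_T^{-1}(h_T^{2r}|\Curl\vec{a}|_{r,T}^2+h_T^{2(s-1)}|\vec{a}|_{s,T}^2)\big)^{\nicefrac12}A_\d(\uvd,\uvd)^{\nicefrac12}$. Gathering the bounds on $\mathfrak{T}_1$ and $\mathfrak{T}_2$ and using $|(\uvd,\uqd)|_\d=1$ concludes the proof.
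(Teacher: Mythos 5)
Your proposal is correct and follows essentially the same route as the paper: the third-Strang-lemma setup, the splitting into $\mathfrak{T}_1$ (handled via~\eqref{eq:vecpot.str.a}, cell-by-cell integration by parts, the definition~\eqref{eq:CT}, and the commutation property~\eqref{eq:dHcurlT.comm}) and $\mathfrak{T}_2$ (handled exactly as in Theorem~\ref{th:field.esti}), and the cancellation of the flux contributions on the $\Gamma_j$ via~\eqref{eq:vecpot.str.d} all match the paper's argument. The only real divergence is that, for the delicate face term pairing the trace error of $\mu^{-1}\Curl\vec{a}$ against $\vec{v}_{T\mid F}{\times}\normal_F-\vFt$, the paper simply invokes~\cite[Lem.~9]{LePit:25} for ``full control of tangential jumps,'' whereas you prove the needed bound inline by splitting off the $\kGT{k}$-component of $\vT$ via~\eqref{eq:decomp.T}--\eqref{eq:rttr} and using the isomorphism $\Curl:\kGT{k}\to\pRT{k-1}$ plus a discrete trace inequality — which is precisely the mechanism behind the cited lemma, so your self-contained version is a valid (and instructive) substitute.
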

\begin{proof}
  First, by Sobolev embedding, remark that since $\vec{a}\in\vec{H}^s(\t)$ for some $s>1$ ($s>0$ would be enough), then $\vec{a}\in\vec{L}^q(\Omega)$ for some $q>2$, and one can thus give a meaning to $\intHcurl(\vec{a})\in\dHzcurl$.
  Let us set $|(\uwd,\urd)|_\d^2\defi A_\d(\uwd,\uwd)+S_\d(\urd,\urd)$.
  Proceeding as in the proof of Theorem~\ref{th:field.esti}, we infer that
  \[\left(\sum_{T\in\t}\mu_T^{-1}|\uaT-\intHcurlT(\vec{a}_{\mid T})|_{\Curl,T}^2+\sum_{T\in\t}\mu_T|\upT|_{\Grad,T}^2\right)^{\nicefrac12}\lesssim\max_{\overset{(\uvd,\uqd)\in\dHzcurl\times\dHGo,}{|(\uvd,\uqd)|_\d=1}}{\cal E}_\d\big((\uvd,\uqd)\big),\]
  where ${\cal E}_\d\big((\uvd,\uqd)\big)\defi\big(\vec{j},\vTd\big)_\Omega-A_\d\big(\intHcurl(\vec{a}),\uvd\big)+B_\d\big(\intHcurl(\vec{a}),\uqd\big)$.
  Let us define
  \begin{equation*}
    \mathfrak{T}_1\defi\big(\vec{j},\vTd\big)_\Omega-A_\d\big(\intHcurl(\vec{a}),\uvd\big),\quad\mathfrak{T}_2\defi B_\d\big(\intHcurl(\vec{a}),\uqd\big)=\big(\bProjTd(\vec{a}),\GTd(\uqd)\big)_{\Omega}.
  \end{equation*}
  For $\mathfrak{T}_1$, (i) replacing $\vec{j}$ by $\Curl(\mu^{-1}\Curl\vec{a})$ (leveraging~\eqref{eq:vecpot.str.a}), then performing a cell-by-cell integration by parts in $\big(\vec{j},\vTd\big)_\Omega$, and (ii) using the definition~\eqref{eq:CT} of the (local) rotational reconstruction operator for $A_\d\big(\intHcurl(\vec{a}),\uvd\big)$, we infer that
  \begin{multline*}
    \mathfrak{T}_1=\big(\mu^{-1}\big(\Curl\vec{a}-\big(\CTd\circ\intHcurl\big)(\vec{a})\big),\Curl_\t\!\vTd\big)_{\Omega}-\sum_{T\in\t}\mu_T^{-1}S_{\Curl,T}\big(\intHcurlT(\vec{a}_{\mid T}),\uvT\big)\\
    +\sum_{T\in\t}\sum_{F\in\f_T}\varepsilon_{T,F}\big(\vec{v}_{T\mid F}{\times}\normal_F-\vFt,\normal_F{\times}\big(\mu^{-1}\Curl\vec{a}-\mu_T^{-1}\big(\CT\circ\intHcurlT\big)(\vec{a}_{\mid T})\big)_{\mid F}{\times}\normal_F\big)_F,
  \end{multline*}
  where we have also utilized the fact that $\mu^{-1}\Curl\vec{a}\in\Hcurl$ with $\mu^{-1}\Curl\vec{a}\in\vec{H}^r(\t)$ for some $r>\frac{1}{2}$, together with the fact that $\uvd\in\dHzcurl$.
  Let us first estimate the two first addends in $\mathfrak{T}_1$, whose sum we denote by $\mathfrak{T}_{1,1}$ ($\mathfrak{T}_{1,2}$ will denote the last addend of $\mathfrak{T}_1$).
  By (i) the local commutation property~\eqref{eq:dHcurlT.comm}, along with the fact that $\Curl\vec{a}\in\vec{H}^r(\t)$, and (ii) the fact that $\vec{a}\in\vec{H}^s(\t)$ for some $s>1$ ($s>\frac{1}{2}$ would be enough to give a meaning to the boundary terms), we get from the fractional approximation results of~\cite[Lem.~2.5]{CEPig:21} that
  \[\mathfrak{T}_{1,1}\lesssim\left(\sum_{T\in\t}\mu_T^{-1}\big(h_T^{2r}|\Curl\vec{a}|_{r,T}^2+h_T^{2(s-1)}|\vec{a}|_{s,T}^2\big)\right)^{\nicefrac12}\left(\sum_{T\in\t}\mu_T^{-1}|\uvT|_{\Curl,T}^2\right)^{\nicefrac12}.\]
  In turn, using~\cite[Lem.~9]{LePit:25} (yielding full control of tangential jumps), together with similar arguments, we infer that
  \[\mathfrak{T}_{1,2}\lesssim\left(\sum_{T\in\t}\mu_T^{-1}h_T^{2r}|\Curl\vec{a}|_{r,T}^2\right)^{\nicefrac12}\left(\sum_{T\in\t}\mu_T^{-1}|\uvT|_{\Curl,T}^2\right)^{\nicefrac12}.\]
  By~\eqref{eq:vecpot.AT}, we finally get $\mathfrak{T}_1\lesssim\left(\sum_{T\in\t}\mu_T^{-1}\big(h_T^{2r}|\Curl\vec{a}|_{r,T}^2+h_T^{2(s-1)}|\vec{a}|_{s,T}^2\big)\right)^{\nicefrac12}A_\d(\uvd,\uvd)^{\nicefrac12}$.
  Let us now estimate $\mathfrak{T}_2$. Using the definition~\eqref{eq:GT} of the (local) gradient reconstruction (with the fact that $\uqd\in\dHGo$), and integrating by parts the volumetric term therein, we infer
  \[\mathfrak{T}_2=\big(\vec{a},\Grad_\t\!\qTd\big)_{\Omega}+\sum_{T\in\t}\sum_{F\in\f_T}\big(\bProjT(\vec{a}_{\mid T})_{\mid F}{\cdot}\normal_{T,F},\qF-q_{T\mid F}\big)_F.\]
  A cell-by-cell integration by parts of the first term in the right-hand side, and the combination of $\vec{a}\in\vec{H}(\Div^0;\Omega)$ (according to~\eqref{eq:vecpot.str.b}) and $\uqd\in\dHGo$ (cf.~\eqref{eq:dHzo} and~\eqref{eq:dHGo}), yields
  \[\mathfrak{T}_2=\sum_{T\in\t}\sum_{F\in\f_T}\varepsilon_{T,F}\big(\big(\bProjT(\vec{a}_{\mid T})_{\mid F}-\vec{a}_{\mid F}\big){\cdot}\normal_{F},\qF-q_{T\mid F}\big)_F+\sum_{j=1}^{\beta_2}\sum_{F\in\f^\partial_{\Gamma_j}}\gamma_j\big(\vec{a}_{\mid F}{\cdot}\normal_F,1\big)_F,\]
  where we have additionally utilized the fact that $\vec{a}\in\vec{H}^s(\t)$ for some $s>1$ ($s>\frac{1}{2}$ would be sufficient) to give a standard meaning to its normal traces.
  \ifSISC
  Now, since, for any $j\in\{1,\ldots,\beta_2\}$, there holds $\sum_{F\in\f^\partial_{\Gamma_j}}\gamma_j\big(\vec{a}_{\mid F}{\cdot}\normal_F,1\big)_F=\gamma_j\big(\vec{a}_{\mid\Gamma_j}{\cdot}\normal,1\big)_{\Gamma_j}$, we infer by~\eqref{eq:vecpot.str.d} that $\mathfrak{T}_2=\sum_{T\in\t}\sum_{F\in\f_T}\varepsilon_{T,F}\big(\big(\bProjT(\vec{a}_{\mid T})_{\mid F}-\vec{a}_{\mid F}\big){\cdot}\normal_{F},\qF-q_{T\mid F}\big)_F$.
  \else
  Now, since, for any $j\in\{1,\ldots,\beta_2\}$, there holds $\sum_{F\in\f^\partial_{\Gamma_j}}\gamma_j\big(\vec{a}_{\mid F}{\cdot}\normal_F,1\big)_F=\gamma_j\big(\vec{a}_{\mid\Gamma_j}{\cdot}\normal,1\big)_{\Gamma_j}$, we infer by~\eqref{eq:vecpot.str.d} that
  \begin{equation*}
    \mathfrak{T}_2=\sum_{T\in\t}\sum_{F\in\f_T}\varepsilon_{T,F}\big(\big(\bProjT(\vec{a}_{\mid T})_{\mid F}-\vec{a}_{\mid F}\big){\cdot}\normal_{F},\qF-q_{T\mid F}\big)_F.
  \end{equation*}
  \fi
  The approximation results of~\cite[Lem.~2.5]{CEPig:21}, along with standard arguments, then show that
  \[\mathfrak{T}_2\lesssim\left(\sum_{T\in\t}\mu_T^{-1}h_T^{2(s-1)}|\vec{a}|_{s,T}^2\right)^{\nicefrac12}\left(\sum_{T\in\t}\mu_TS_{\Grad,T}(\uqT,\uqT)\right)^{\nicefrac12}.\]
  Since $\sum_{T\in\t}\mu_TS_{\Grad,T}(\uqT,\uqT)\leq S_\d(\uqd,\uqd)$, gathering the estimates on $\mathfrak{T}_1$ and $\mathfrak{T}_2$, and using that $|(\uvd,\uqd)|_\d=1$, we conclude the proof.
\end{proof}
\noindent
From the error estimate~\eqref{eq:vecpot.esti}, we infer that the HHO scheme converges with optimal rate $h_\d^k$ when $\vec{a}\in\vec{H}^{k+1}(\t)$.
\ifSISC
\else
A few remarks are necessary before concluding this section.
\fi
\begin{remark}[Low-regularity solutions]
  In order to yield convergence, the energy-error estimate from Theorem~\ref{th:vecpot.esti} essentially requires that $\vec{a}\in\vec{H}^s(\t)$ for some $s>1$.
  By Remark~\ref{re:reg.vecpot}, this regularity assumption on the solution is not met in general, even on a convex domain.
  However, it is always true that $\vec{a}\in\vec{H}^s(\t)$ for some $s>\frac12$.
  Recovering convergence for any $s>\frac12$ would then necessitate to consider a weaker error measure for the scheme, formally on $h_\t^{\nicefrac12}\CTd$.
  This is precisely the approach pursued in~\cite{ErnGu:23,ErnGu:25} in the DG context.
  Note that the correct treatment of interface terms also requires that $\mu^{-1}\Curl\vec{a}\in\vec{H}^r(\t)$ for some $r>\frac12$.
  By Remark~\ref{re:reg.vecpot}, the latter regularity assumption is met for a globally constant coefficient $\mu$, but is not satisfied in general.
  Adapting the arguments from~\cite[Sec.~5]{ErnGu:22}, it should however be possible to relax this assumption to $r>0$, which would cover the general case.
  We do not pursue herein further in this direction.
\end{remark}
\ifSISC
\else
\begin{remark}[Variant of the scheme]
  It is possible, instead of (locally) reconstructing the rotational in $\bPoly^{k-1}(T)$ as in~\eqref{eq:CT}, to reconstruct it in the smaller space $\pRT{k-1}=\Curl\big(\bPoly^k(T)\big)$.
  This was the strategy advocated in~\cite[Sec.~3.2]{CDPLe:22}.
  In this case, the proxy for $\Curl\vec{a}$ is given by $\vec{\pi}^{k-1}_{\vec{\mathcal{R}},\t}(\Curl\vec{a})$ (with obvious notation for the projector) in place of $\bProjTd[k-1](\Curl\vec{a})$.
  This proxy has approximability properties no better than $\Curl_\t\!\big(\bProjTd(\vec{a})\big)$ (see Remark~\ref{re:variant}).
  Therefore, since this alternative construction does not allow to relax~\eqref{eq:pQF.bis}, and given that the complexity gain in using $\pRT{k-1}$ instead of $\bPoly^{k-1}(T)$ to reconstruct the rotational is marginal (local systems are small, and local computations are embarrassingly parallel), it is definitely a better option to stick to the choice $\bPoly^{k-1}(T)$ as in~\eqref{eq:CT}.
\end{remark}
\begin{remark}[Robustness to heterogeneity]
  Let us introduce $\kappa_\mu\defi\mu_\sharp/\mu_\flat\geq 1$ the (global) heterogeneity ratio of the magnetic permeability $\mu$.
  From~\eqref{eq:vecpot.esti}, one can infer that $|\underline{\vec{a}}_\d-\intHcurl(\vec{a})|_{\Curl,\d}$ is bounded by $\kappa_\mu^{\nicefrac12}$. Since $\vec{b}=\Curl\vec{a}$ (recall that $\vec{a}$ is the magnetic vector potential), notice that $|\underline{\vec{a}}_\d-\intHcurl(\vec{a})|_{\Curl,\d}$ is actually an $L^2$-like measure of the scheme error on $\vec{b}$. The computed magnetic induction is thus only mildly sensitive to heterogeneity.
\end{remark}
\fi

\section{Numerical experiments} \label{se:num}

We perform in this section a comprehensive numerical assessment of the HHO schemes we devised and analyzed in Section~\ref{sse:hho}.
One important goal is to validate numerically the theoretical rates of convergence of the methods. Our focus is on relatively simple geometries, and on test-cases for which an analytical solution is available.
The computational domains under study include topologically non-trivial geometries, as well as a domain with reentrant edge, for which the corresponding solution is singular.

We consider two families of (regular) mesh sequences, respectively composed of \emph{tetrahedral} and \emph{polyhedral} meshes.
The polyhedral meshes are constructed from the tetrahedral ones by agglomerating tetrahedral elements into polyhedra in a random fashion.
The agglomeration procedure concerns all tetrahedra surrounding randomly selected mesh vertices, in such a way that one can construct a mesh sequence whose regularity parameter does not decrease upon refinement.
We here solve the discrete Problems~\eqref{eq:field.dis} and~\eqref{eq:vecpot.dis}, respectively referred to as first- and second-order Problems, modulo the following adaptations: (i) building on Remark~\ref{re:variant}, for the first-order Problem, when the exact solution is regular, we replace the discrete rotational operator $\CTd$  by its broken version $\Curl_\t$, and the magnetic face space is then set to $\pQF{k}\defi\pRF{k}$ for all $F\in\f$; building on Remark~\ref{re:tet}, (ii) when $\t$ is a tetrahedral mesh, we remove from both the first- and second-order Problems the contribution $S_\d(\upd,\uqd)$, whereas (iii) when $\t$ is a polyhedral mesh, we remove from both the first- and second-order Problems, in the definition of $S_\d(\upd,\uqd)$, the contribution stemming from the local bilinear form $h_T^2\big(\Grad\pT,\Grad\qT\big)_T$.
Doing so, we achieve, without compromising stability, a somewhat leaner implementation, and we bear consistency with the numerical results from~\cite{CDPLe:22}, where these variants were actually implemented (in the case of topologically trivial domains only).
Importantly, and as already noticed in~\cite{CDPLe:22}, the numerical tests that we performed (not reported here) show that these variants achieve virtually the same accuracy as the original schemes (and, interestingly, for examples with non-trivial topology, even slightly improve it).
As standard with skeletal methods, for each discrete Problem, both the magnetic and pressure-like cell unknowns are locally eliminated (without additional fill-in) from the global linear system using a Schur complement technique.
The resulting (condensed) global linear system is solved using the \texttt{SparseLU} direct solver of the \texttt{Eigen} library, on a laptop Dell Precision 5570 equipped with an Intel Core i7-12800H processor clocked at 2.80GHz, and with 64Gb of RAM.
The implementation uses the open-source C++ library ParaSkel++\footnote{cf.~\url{https://gitlab.inria.fr/simlemai/paraskel} (under GNU LGPL v3.0).}~\cite{BeaLe:21}.
For each computed discrete solution $\uud\in\{\uhd,\uad\}$, corresponding to the exact solution $\vec{u}\in\{\vec{h},\vec{a}\}$, we evaluate the relative \emph{energy-error} and \emph{$L^2$-error} by
\[\verb!En_err!\defi\frac{\left(\sum_{T\in\t}\eta_T^{-1}|\uuT-\intHcurlT(\vec{u}_{\mid T})|_{\Curl,T}^2\right)^{\nicefrac12}}{\left(\sum_{T\in\t}\eta_T^{-1}|\intHcurlT(\vec{u}_{\mid T})|_{\Curl,T}^2\right)^{\nicefrac12}}\quad\text{and}\quad\verb!L2_err!\defi\frac{\|\uTd-\bProjTd(\vec{u})\|_{0,\Omega}}{\|\bProjTd(\vec{u})\|_{0,\Omega}},\]
with $\eta\defi 1$ in the first-order case, and $\eta\defi\mu$ in the second-order one.
For all the numerical examples below, except in Section~\ref{sse:variable}, we set $\mu\equiv 1$.
Whenever the computational domain is non-simply-connected, and some additional flux constraints need to be enforced through cutting surfaces so as to guarantee uniqueness for the solution, one needs to compute, from the partition at hand, a discrete representative of each cutting surface. To this aim, we implemented a simplified version of the (co)homology computation algorithm proposed in~\cite{ARBGS:18}. We refer the reader to Figure~\ref{fig:cut} for an example, in the case of a torus discretized by a tetrahedral mesh, of a discrete cutting surface computed by the latter algorithm.

\begin{figure}[h!]
  \begin{center}
    \includegraphics[scale=0.22]{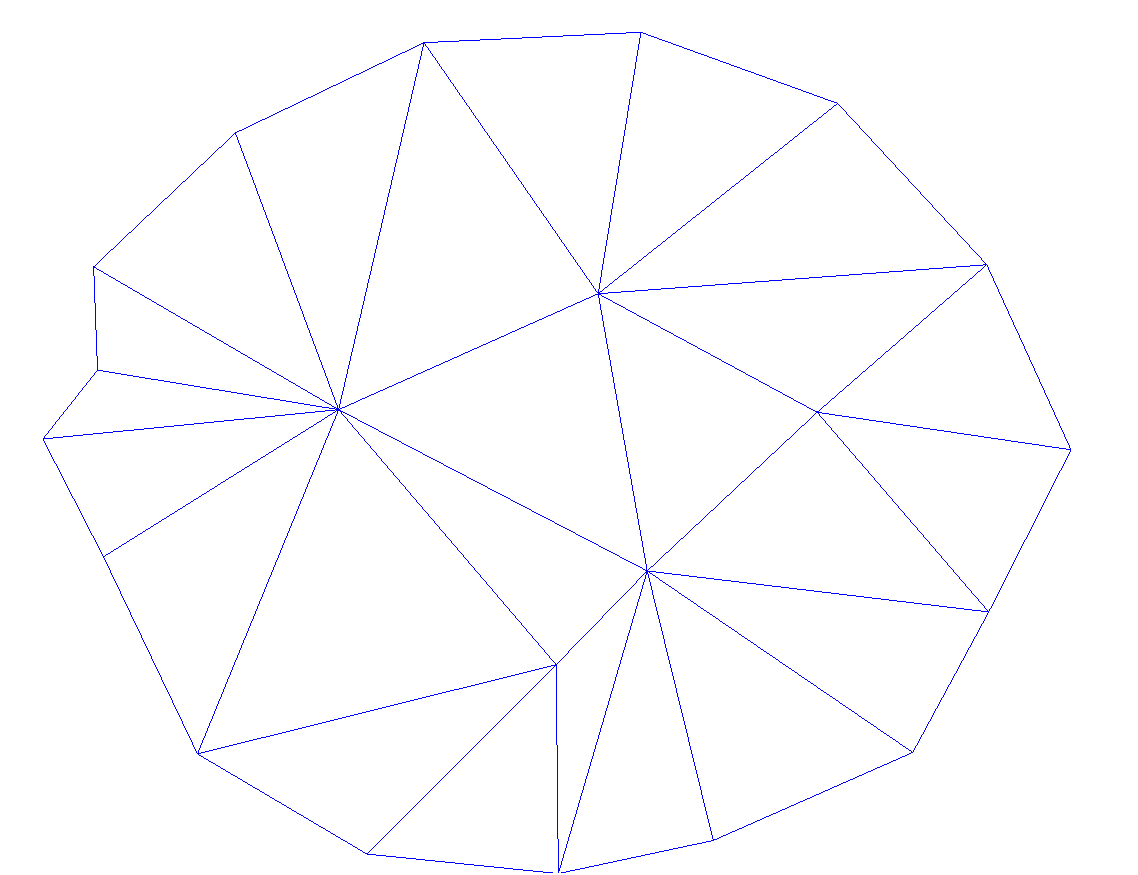}
  \end{center}
  \caption{\label{fig:cut}
    {\bf Test-case~\ref{sse:torus} (torus).} Example of a discrete cutting surface computed by the algorithm from~\cite{ARBGS:18}. The surface is piecewise planar, and the depicted triangles are faces of the tetrahedral mesh discretizing $\Omega$.}
\end{figure}

\subsection{Torus} \label{sse:torus}

Our first example consists in a toroidal domain $\Omega$, with midline radius $R=2$ and internal radius $r=1$.
For this geometry, $\beta_1=1$, and $\beta_2=0$.
We consider the first-order Problem~\eqref{eq:field.str}, with following smooth exact solution, expressed in cylindrical coordinates:
\begin{equation} \label{eq:sol.torus}
  \vec{h}(\rho,\varphi,z)\defi\Big(\cos\Big(\frac{\pi}{r}\sqrt{(\rho-R)^2+z^2}\Big)-c\Big)\hat{\vec{\varphi}},
\end{equation}
where the constant $c\in\Real$ is set so that, for some cutting surface $\Sigma$, the condition $\langle\vec{h}_{\mid\Sigma}{\cdot}\normal_{\Sigma},1\rangle_{\Sigma}=0$ is satisfied.
In our numerical experiments, the electric current density $\vec{j}$ is set according to~\eqref{eq:field.str.a}, whereas the zero normal boundary condition~\eqref{eq:field.str.c} and zero flux condition~\eqref{eq:field.str.d} (valid, respectively, on the curved boundary $\Gamma$, and on the $\Gamma$-delineated cutting surface $\Sigma$) are replaced by their non-homogeneous versions computed from~\eqref{eq:sol.torus} on the given piecewise polygonal approximations of $\Gamma$ and $\Sigma$.
For each $k\in\{1,2,3\}$, we solve Problem~\eqref{eq:field.dis} (with $\Curl_\t$), and depict on Figure~\ref{fig:torus}, for both the tetrahedral and (agglomerated) polyhedral mesh families, the relative energy-error and $L^2$-error as functions of the mesh size ($h_\d$), and of the number of degrees of freedom (\#DoF) after static condensation.
For the two mesh families, as predicted by Theorem~\ref{th:field.esti}, we obtain a convergence rate for the energy-error of order $k$.
We also observe a convergence rate of order $k+1$ for the $L^2$-error on the magnetic field.

\begin{figure}[h!]
  \centering
  \ref{legend:torus}
  \vspace{0.25cm}\\
  \begin{minipage}{0.40\textwidth}
    \begin{tikzpicture}[scale=0.63]
      \begin{loglogaxis}[legend columns=3, legend to name=legend:torus]
        \addplot[style=solid,color=blue,mark=*] table[x=meshsize,y=errXnorm_u] {dat/k1_torus_tet.dat};
        \addplot[style=solid,color=red,mark=square*] table[x=meshsize,y=errXnorm_u] {dat/k2_torus_tet.dat};
        \addplot[style=solid,color=brown,mark=diamond*] table[x=meshsize,y=errXnorm_u] {dat/k3_torus_tet.dat};
        \addplot[style=dashed,color=blue,mark=*,mark options={solid}] table[x=meshsize,y=errXnorm_u] {dat/k1_torus_pol.dat};
        \addplot[style=dashed,color=red,mark=square*,mark options={solid}] table[x=meshsize,y=errXnorm_u] {dat/k2_torus_pol.dat};
        \addplot[style=dashed,color=brown,mark=diamond*,mark options={solid}] table[x=meshsize,y=errXnorm_u] {dat/k3_torus_pol.dat};
        \logLogSlopeTriangle{0.90}{0.4}{0.1}{1}{black};
        \logLogSlopeTriangle{0.90}{0.4}{0.1}{2}{black};
        \logLogSlopeTriangle{0.90}{0.4}{0.1}{3}{black};
        \legend{{\small $k=1$ (tet)},{\small $k=2$ (tet)},{\small $k=3$ (tet)},{\small $k=1$ (pol)},{\small $k=2$ (pol)},{\small $k=3$ (pol)}};
      \end{loglogaxis}
    \end{tikzpicture}
  \end{minipage}
  \hspace{0.025\textwidth}
  \begin{minipage}{0.40\textwidth}
    \begin{tikzpicture}[scale=0.63]
      \begin{loglogaxis}
        \addplot[style=solid,color=blue,mark=*] table[x=n_DOFs,y=errXnorm_u] {dat/k1_torus_tet.dat};
        \addplot[style=solid,color=red,mark=square*] table[x=n_DOFs,y=errXnorm_u] {dat/k2_torus_tet.dat};
        \addplot[style=solid,color=brown,mark=diamond*] table[x=n_DOFs,y=errXnorm_u] {dat/k3_torus_tet.dat};
        \addplot[style=dashed,color=blue,mark=*,mark options={solid}] table[x=n_DOFs,y=errXnorm_u] {dat/k1_torus_pol.dat};
        \addplot[style=dashed,color=red,mark=square*,mark options={solid}] table[x=n_DOFs,y=errXnorm_u] {dat/k2_torus_pol.dat};
        \addplot[style=dashed,color=brown,mark=diamond*,mark options={solid}] table[x=n_DOFs,y=errXnorm_u] {dat/k3_torus_pol.dat};
        \logLogSlopeTriangleNDOFs{0.10}{-0.4}{0.1}{1/3}{black};
        \logLogSlopeTriangleNDOFs{0.10}{-0.4}{0.1}{2/3}{black};
        \logLogSlopeTriangleNDOFs{0.10}{-0.4}{0.1}{1}{black};
      \end{loglogaxis}
    \end{tikzpicture}
  \end{minipage}
  \vspace{0.2cm}\\
  \begin{minipage}{0.40\textwidth}
    \begin{tikzpicture}[scale=0.63]
      \begin{loglogaxis}
        \addplot[style=solid,color=blue,mark=*] table[x=meshsize,y=errL2_u] {dat/k1_torus_tet.dat};
        \addplot[style=solid,color=red,mark=square*] table[x=meshsize,y=errL2_u] {dat/k2_torus_tet.dat};
        \addplot[style=solid,color=brown,mark=diamond*] table[x=meshsize,y=errL2_u] {dat/k3_torus_tet.dat};
        \addplot[style=dashed,color=blue,mark=*,mark options={solid}] table[x=meshsize,y=errL2_u] {dat/k1_torus_pol.dat};
        \addplot[style=dashed,color=red,mark=square*,mark options={solid}] table[x=meshsize,y=errL2_u] {dat/k2_torus_pol.dat};
        \addplot[style=dashed,color=brown,mark=diamond*,mark options={solid}] table[x=meshsize,y=errL2_u] {dat/k3_torus_pol.dat};
        \logLogSlopeTriangle{0.90}{0.4}{0.1}{2}{black};
        \logLogSlopeTriangle{0.90}{0.4}{0.1}{3}{black};
        \logLogSlopeTriangle{0.90}{0.4}{0.1}{4}{black};
      \end{loglogaxis}
    \end{tikzpicture}
  \end{minipage}
  \hspace{0.025\textwidth}
  \begin{minipage}{0.40\textwidth}
    \begin{tikzpicture}[scale=0.63]
      \begin{loglogaxis}
        \addplot[style=solid,color=blue,mark=*] table[x=n_DOFs,y=errL2_u] {dat/k1_torus_tet.dat};
        \addplot[style=solid,color=red,mark=square*] table[x=n_DOFs,y=errL2_u] {dat/k2_torus_tet.dat};
        \addplot[style=solid,color=brown,mark=diamond*] table[x=n_DOFs,y=errL2_u] {dat/k3_torus_tet.dat};
        \addplot[style=dashed,color=blue,mark=*,mark options={solid}] table[x=n_DOFs,y=errL2_u] {dat/k1_torus_pol.dat};
        \addplot[style=dashed,color=red,mark=square*,mark options={solid}] table[x=n_DOFs,y=errL2_u] {dat/k2_torus_pol.dat};
        \addplot[style=dashed,color=brown,mark=diamond*,mark options={solid}] table[x=n_DOFs,y=errL2_u] {dat/k3_torus_pol.dat};
        \logLogSlopeTriangleNDOFs{0.10}{-0.4}{0.1}{2/3}{black};
        \logLogSlopeTriangleNDOFs{0.10}{-0.4}{0.1}{1}{black};
        \logLogSlopeTriangleNDOFs{0.10}{-0.4}{0.1}{4/3}{black};
      \end{loglogaxis}
    \end{tikzpicture}
  \end{minipage}
  \caption{\label{fig:torus}
    {\bf Test-case~\ref{sse:torus} (torus).} Relative energy-error ({\it top row}) and $L^2$-error ({\it bottom row}) vs.~$h_\d$ ({\it left column}) and \#DoF ({\it right column}) on tetrahedral ({\it solid}) and polyhedral ({\it dashed}) meshes.}
\end{figure}
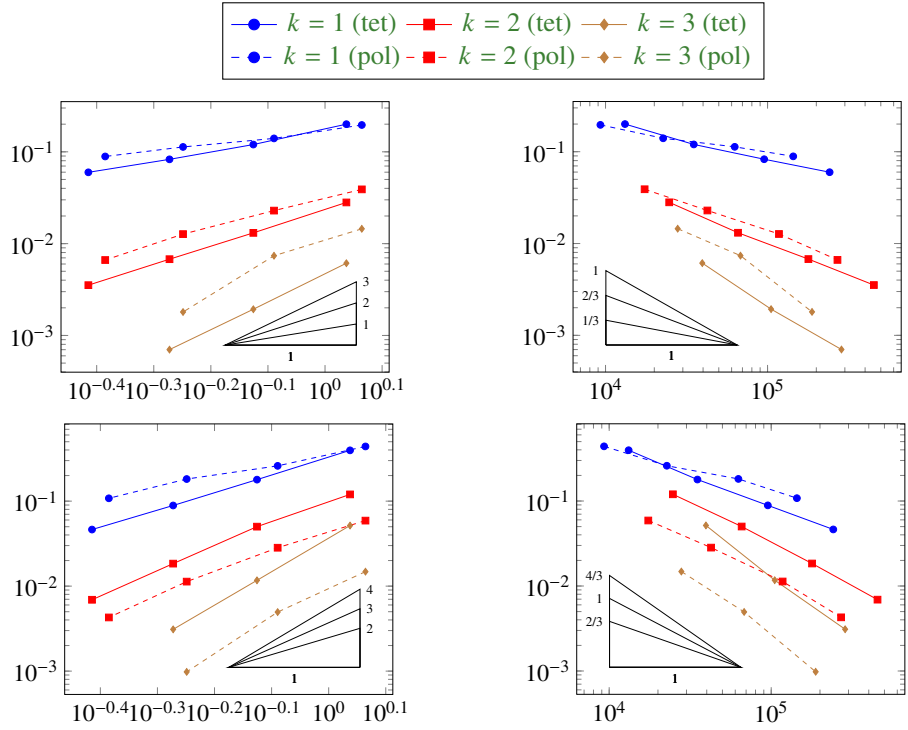

\subsection{Hollow ball} \label{sse:hollow.ball}

Our second example consists in a ball $\Omega$ of radius $2$, encapsulating a concentric hollow spheric cavity of radius $1$, so that its boundary $\Gamma$ consists of two connected components $\Gamma_0$ and $\Gamma_1$. For such a geometry, $\beta_1=0$, and $\beta_2 = 1$.
We consider the second-order Problem~\eqref{eq:vecpot.str}, with following smooth exact solution, expressed in spherical coordinates:
\ifSISC
\begin{equation} \label{eq:sol.hollow.ball}
  \vec{a}(r,\theta,\varphi)\defi\cos(\theta)/r^2\,\hat{\vec{r}}.
\end{equation}
\else
\begin{equation} \label{eq:sol.hollow.ball}
  \vec{a}(r,\theta,\varphi)\defi\frac{1}{r^2}\cos(\theta)\,\hat{\vec{r}}.
\end{equation}
\fi
Remark that $\vec{a}$ satisfies $\langle\vec{a}_{\mid\Gamma_1}{\cdot}\normal,1\rangle_{\Gamma_1}=0$ (as well as $\langle\vec{a}_{\mid\Gamma_0}{\cdot}\normal,1\rangle_{\Gamma_0}=0$, since $\Div\vec{a}=0$ in $\Omega$).
In our numerical experiments, the electric current density $\vec{j}$ is set according to~\eqref{eq:vecpot.str.a}, whereas the zero tangential boundary condition~\eqref{eq:vecpot.str.c} and zero flux condition~\eqref{eq:vecpot.str.d} (valid on the curved boundaries $\Gamma$ and $\Gamma_1$) are replaced by their non-homogeneous versions computed from~\eqref{eq:sol.hollow.ball} on the given piecewise polygonal approximations of $\Gamma$ and $\Gamma_1$.
For each $k\in\{1,2,3\}$, we solve Problem~\eqref{eq:vecpot.dis}, and plot on Figure~\ref{fig:hollow.ball} the relative energy-error and $L^2$-error as functions of the mesh size ($h_\d$) and of the number of degrees of freedom (\#DoF) after static condensation, for both the tetrahedral and polyhedral mesh families.
For both mesh families, we obtain, as predicted by Theorem~\ref{th:vecpot.esti}, a convergence rate for the energy-error of order $k$. We also observe a convergence rate of order $k+1$ for the $L^2$-error on the magnetic vector potential.

\begin{figure}[h!]
  \centering
  \ref{legend:hollow.ball}
  \vspace{0.25cm}\\
  \begin{minipage}{0.40\textwidth}
    \begin{tikzpicture}[scale=0.63]
      \begin{loglogaxis}[legend columns=3, legend to name=legend:hollow.ball]
        \addplot[style=solid,color=blue,mark=*] table[x=meshsize,y=errXnorm_u] {dat/k1_hollowball_tet.dat};
        \addplot[style=solid,color=red,mark=square*] table[x=meshsize,y=errXnorm_u] {dat/k2_hollowball_tet.dat};
        \addplot[style=solid,color=brown,mark=diamond*] table[x=meshsize,y=errXnorm_u] {dat/k3_hollowball_tet.dat};
        \addplot[style=dashed,color=blue,mark=*,mark options={solid}] table[x=meshsize,y=errXnorm_u] {dat/k1_hollowball_pol.dat};
        \addplot[style=dashed,color=red,mark=square*,mark options={solid}] table[x=meshsize,y=errXnorm_u] {dat/k2_hollowball_pol.dat};
        \addplot[style=dashed,color=brown,mark=diamond*,mark options={solid}] table[x=meshsize,y=errXnorm_u] {dat/k3_hollowball_pol.dat};
        \logLogSlopeTriangle{0.90}{0.4}{0.1}{1}{black};
        \logLogSlopeTriangle{0.90}{0.4}{0.1}{2}{black};
        \logLogSlopeTriangle{0.90}{0.4}{0.1}{3}{black};
        \legend{{\small $k=1$ (tet)},{\small $k=2$ (tet)},{\small $k=3$ (tet)},{\small $k=1$ (pol)},{\small $k=2$ (pol)},{\small $k=3$ (pol)}};
      \end{loglogaxis}
    \end{tikzpicture}
  \end{minipage}
  \hspace{0.025\textwidth}
  \begin{minipage}{0.40\textwidth}
    \begin{tikzpicture}[scale=0.63]
      \begin{loglogaxis}
        \addplot[style=solid,color=blue,mark=*] table[x=n_DOFs,y=errXnorm_u] {dat/k1_hollowball_tet.dat};
        \addplot[style=solid,color=red,mark=square*] table[x=n_DOFs,y=errXnorm_u] {dat/k2_hollowball_tet.dat};
        \addplot[style=solid,color=brown,mark=diamond*] table[x=n_DOFs,y=errXnorm_u] {dat/k3_hollowball_tet.dat};
        \addplot[style=dashed,color=blue,mark=*,mark options={solid}] table[x=n_DOFs,y=errXnorm_u] {dat/k1_hollowball_pol.dat};
        \addplot[style=dashed,color=red,mark=square*,mark options={solid}] table[x=n_DOFs,y=errXnorm_u] {dat/k2_hollowball_pol.dat};
        \addplot[style=dashed,color=brown,mark=diamond*,mark options={solid}] table[x=n_DOFs,y=errXnorm_u] {dat/k3_hollowball_pol.dat};
        \logLogSlopeTriangleNDOFs{0.10}{-0.4}{0.1}{1/3}{black};
        \logLogSlopeTriangleNDOFs{0.10}{-0.4}{0.1}{2/3}{black};
        \logLogSlopeTriangleNDOFs{0.10}{-0.4}{0.1}{1}{black};
      \end{loglogaxis}
    \end{tikzpicture}
  \end{minipage}
  \vspace{0.2cm}\\
  \begin{minipage}{0.40\textwidth}
    \begin{tikzpicture}[scale=0.63]
      \begin{loglogaxis}
        \addplot[style=solid,color=blue,mark=*] table[x=meshsize,y=errL2_u] {dat/k1_hollowball_tet.dat};
        \addplot[style=solid,color=red,mark=square*] table[x=meshsize,y=errL2_u] {dat/k2_hollowball_tet.dat};
        \addplot[style=solid,color=brown,mark=diamond*] table[x=meshsize,y=errL2_u] {dat/k3_hollowball_tet.dat};
        \addplot[style=dashed,color=blue,mark=*,mark options={solid}] table[x=meshsize,y=errL2_u] {dat/k1_hollowball_pol.dat};
        \addplot[style=dashed,color=red,mark=square*,mark options={solid}] table[x=meshsize,y=errL2_u] {dat/k2_hollowball_pol.dat};
        \addplot[style=dashed,color=brown,mark=diamond*,mark options={solid}] table[x=meshsize,y=errL2_u] {dat/k3_hollowball_pol.dat};
        \logLogSlopeTriangle{0.90}{0.4}{0.1}{2}{black};
        \logLogSlopeTriangle{0.90}{0.4}{0.1}{3}{black};
        \logLogSlopeTriangle{0.90}{0.4}{0.1}{4}{black};
      \end{loglogaxis}
    \end{tikzpicture}
  \end{minipage}
  \hspace{0.025\textwidth}
  \begin{minipage}{0.40\textwidth}
    \begin{tikzpicture}[scale=0.63]
      \begin{loglogaxis}
        \addplot[style=solid,color=blue,mark=*] table[x=n_DOFs,y=errL2_u] {dat/k1_hollowball_tet.dat};
        \addplot[style=solid,color=red,mark=square*] table[x=n_DOFs,y=errL2_u] {dat/k2_hollowball_tet.dat};
        \addplot[style=solid,color=brown,mark=diamond*] table[x=n_DOFs,y=errL2_u] {dat/k3_hollowball_tet.dat};
        \addplot[style=dashed,color=blue,mark=*,mark options={solid}] table[x=n_DOFs,y=errL2_u] {dat/k1_hollowball_pol.dat};
        \addplot[style=dashed,color=red,mark=square*,mark options={solid}] table[x=n_DOFs,y=errL2_u] {dat/k2_hollowball_pol.dat};
        \addplot[style=dashed,color=brown,mark=diamond*,mark options={solid}] table[x=n_DOFs,y=errL2_u] {dat/k3_hollowball_pol.dat};
        \logLogSlopeTriangleNDOFs{0.10}{-0.4}{0.1}{2/3}{black};
        \logLogSlopeTriangleNDOFs{0.10}{-0.4}{0.1}{1}{black};
        \logLogSlopeTriangleNDOFs{0.10}{-0.4}{0.1}{4/3}{black};
      \end{loglogaxis}
    \end{tikzpicture}
  \end{minipage}
  \caption{\label{fig:hollow.ball}
    {\bf Test-case~\ref{sse:hollow.ball} (hollow ball).} Relative energy-error ({\it top row}) and $L^2$-error ({\it bottom row}) vs.~$h_\d$ ({\it left column}) and \#DoF ({\it right column}) on tetrahedral ({\it solid}) and polyhedral ({\it dashed}) meshes.}
\end{figure}
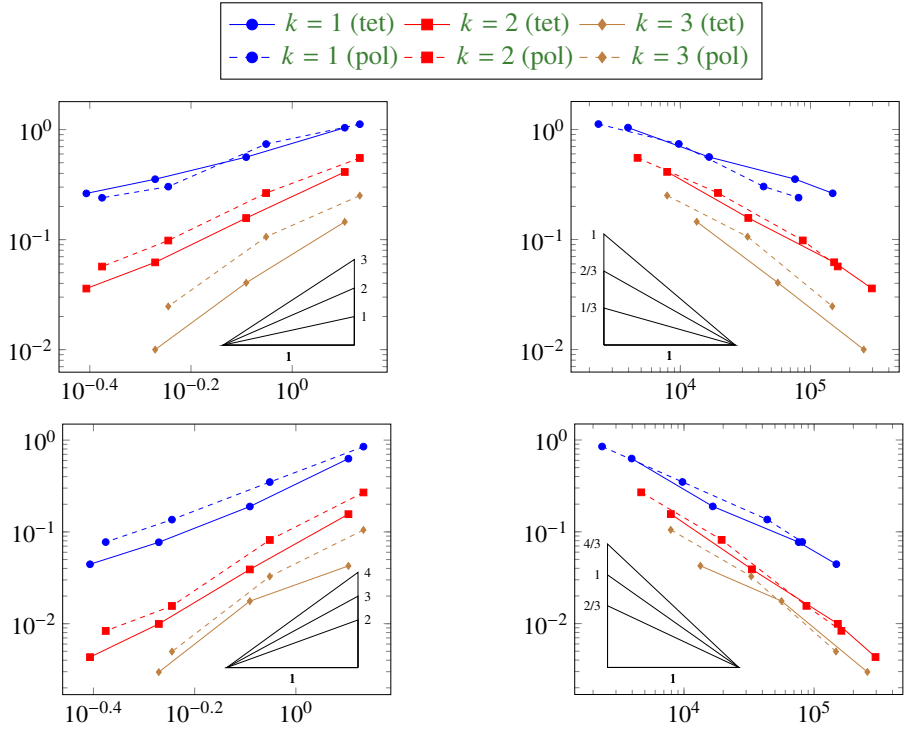

\begin{figure}[h!]
  \begin{center}
    \includegraphics[scale=0.25]{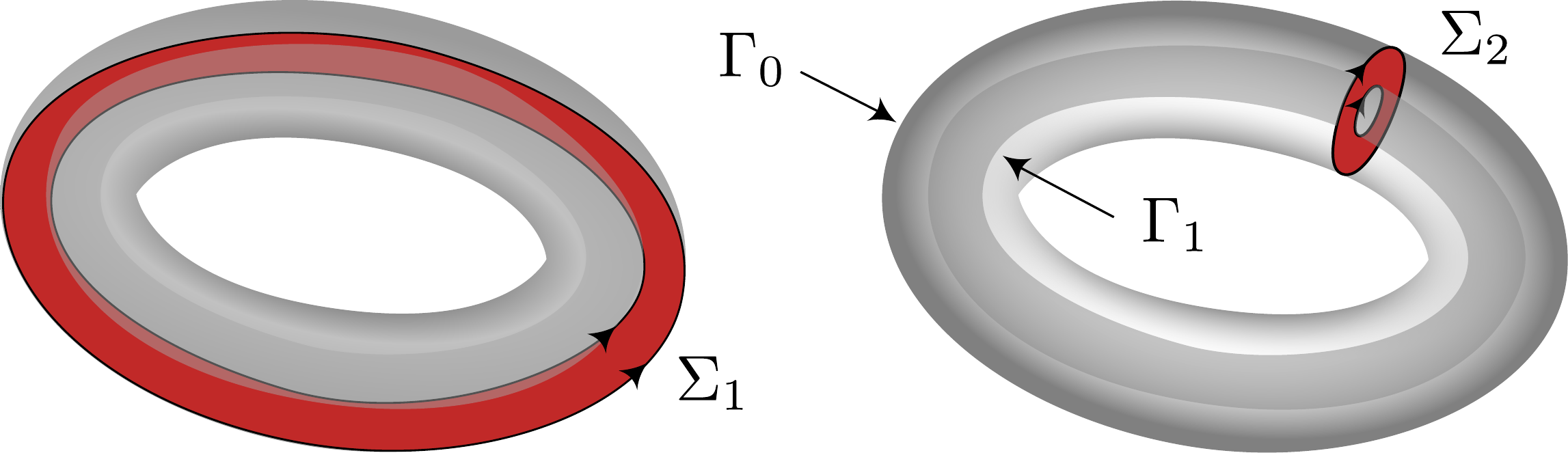}
  \end{center}
  \caption{\label{fig:hollow.torus.geo}
    {\bf Test-case~\ref{sse:hollow.torus} (toroidal vacuum chamber).} The toroidal domain $\Omega$, with coaxial hollow toric cavity. Depicted are two possible cutting surfaces $\Sigma_1$ and $\Sigma_2$, as well as the two connected components $\Gamma_0$ and $\Gamma_1$ of $\Gamma$.}
\end{figure}

\subsection{Toroidal vacuum chamber} \label{sse:hollow.torus}

This test-case is defined on the same toroidal domain as in Section~\ref{sse:torus}, except that $\Omega$ here additionally encapsulates a coaxial (i.e.~with midline radius $R$) hollow toric cavity of internal radius $r/2$ (that is, the cavity has half the internal radius of the torus).
For this geometry, $\beta_1=2$, and $\beta_2=1$.
We solve here the field formulation, with the same exact solution~\eqref{eq:sol.torus} (also same value for the constant $c$) as in the fully solid torus test-case of Section~\ref{sse:torus}.
We depict on Figure~\ref{fig:hollow.torus.geo} two possible cutting surfaces $\Sigma_1$ and $\Sigma_2$, as well as the two connected components $\Gamma_0$ and $\Gamma_1$ of the boundary $\Gamma$.
The vector field $\vec{h}$ given by~\eqref{eq:sol.torus} is actually solution to a non-homogeneous version of the first-order Problem~\eqref{eq:field.str}: it satisfies~\eqref{eq:field.str.b}--\eqref{eq:field.str.c} and $\langle\vec{h}_{\mid\Sigma_1}{\cdot}\normal_{\Sigma_1},1\rangle_{\Sigma_1}=0$, but $\langle\vec{h}_{\mid\Sigma_2}{\cdot}\normal_{\Sigma_2},1\rangle_{\Sigma_2}\neq 0$, so that $\vec{h}$ ``embeds'' an harmonic field in $\sharmonic$ (cf.~\eqref{eq:helm2}).
Here again, for $k\in\{1,2,3\}$, we solve Problem~\eqref{eq:field.dis} (with $\Curl_\t$), and plot on Figure~\ref{fig:hollow.torus.err} the relative energy-error and $L^2$-error as functions of $h_\d$ and \#DoF, for both the tetrahedral and polyhedral mesh families.
The obtained energy-error convergence rates are again consistent with our theory, as established in Theorem~\ref{th:field.esti}.
The observed $L^2$-error convergence rates on the magnetic field are again of order $k+1$.

\begin{figure}[h!]
  \centering
  \ref{legend:hollow.torus}
  \vspace{0.25cm}\\
  \begin{minipage}{0.40\textwidth}
    \begin{tikzpicture}[scale=0.63]
      \begin{loglogaxis}[legend columns=3, legend to name=legend:hollow.torus]
        \addplot[style=solid,color=blue,mark=*] table[x=meshsize,y=errXnorm_u] {dat/k1_hollowtorus_tet.dat};
        \addplot[style=solid,color=red,mark=square*] table[x=meshsize,y=errXnorm_u] {dat/k2_hollowtorus_tet.dat};
        \addplot[style=solid,color=brown,mark=diamond*] table[x=meshsize,y=errXnorm_u] {dat/k3_hollowtorus_tet.dat};
        \addplot[style=dashed,color=blue,mark=*,mark options={solid}] table[x=meshsize,y=errXnorm_u] {dat/k1_hollowtorus_pol.dat};
        \addplot[style=dashed,color=red,mark=square*,mark options={solid}] table[x=meshsize,y=errXnorm_u] {dat/k2_hollowtorus_pol.dat};
        \addplot[style=dashed,color=brown,mark=diamond*,mark options={solid}] table[x=meshsize,y=errXnorm_u] {dat/k3_hollowtorus_pol.dat};
        \logLogSlopeTriangle{0.90}{0.4}{0.05}{1}{black};
        \logLogSlopeTriangle{0.90}{0.4}{0.05}{2}{black};
        \logLogSlopeTriangle{0.90}{0.4}{0.05}{3}{black};
        \legend{{\small $k=1$ (tet)},{\small $k=2$ (tet)},{\small $k=3$ (tet)},{\small $k=1$ (pol)},{\small $k=2$ (pol)},{\small $k=3$ (pol)}};          
      \end{loglogaxis}
    \end{tikzpicture}
  \end{minipage}
  \hspace{0.025\textwidth}
  \begin{minipage}{0.40\textwidth}
    \begin{tikzpicture}[scale=0.63]
      \begin{loglogaxis}
        \addplot[style=solid,color=blue,mark=*] table[x=n_DOFs,y=errXnorm_u] {dat/k1_hollowtorus_tet.dat};
        \addplot[style=solid,color=red,mark=square*] table[x=n_DOFs,y=errXnorm_u] {dat/k2_hollowtorus_tet.dat};
        \addplot[style=solid,color=brown,mark=diamond*] table[x=n_DOFs,y=errXnorm_u] {dat/k3_hollowtorus_tet.dat};
        \addplot[style=dashed,color=blue,mark=*,mark options={solid}] table[x=n_DOFs,y=errXnorm_u] {dat/k1_hollowtorus_pol.dat};
        \addplot[style=dashed,color=red,mark=square*,mark options={solid}] table[x=n_DOFs,y=errXnorm_u] {dat/k2_hollowtorus_pol.dat};
        \addplot[style=dashed,color=brown,mark=diamond*,mark options={solid}] table[x=n_DOFs,y=errXnorm_u] {dat/k3_hollowtorus_pol.dat};
        \logLogSlopeTriangleNDOFs{0.10}{-0.4}{0.1}{1/3}{black};
        \logLogSlopeTriangleNDOFs{0.10}{-0.4}{0.1}{2/3}{black};
        \logLogSlopeTriangleNDOFs{0.10}{-0.4}{0.1}{1}{black};
      \end{loglogaxis}
    \end{tikzpicture}
  \end{minipage}
  \vspace{0.2cm}\\
  \begin{minipage}{0.40\textwidth}
    \begin{tikzpicture}[scale=0.63]
      \begin{loglogaxis}
        \addplot[style=solid,color=blue,mark=*] table[x=meshsize,y=errL2_u] {dat/k1_hollowtorus_tet.dat};
        \addplot[style=solid,color=red,mark=square*] table[x=meshsize,y=errL2_u] {dat/k2_hollowtorus_tet.dat};
        \addplot[style=solid,color=brown,mark=diamond*] table[x=meshsize,y=errL2_u] {dat/k3_hollowtorus_tet.dat};
        \addplot[style=dashed,color=blue,mark=*,mark options={solid}] table[x=meshsize,y=errL2_u] {dat/k1_hollowtorus_pol.dat};
        \addplot[style=dashed,color=red,mark=square*,mark options={solid}] table[x=meshsize,y=errL2_u] {dat/k2_hollowtorus_pol.dat};
        \addplot[style=dashed,color=brown,mark=diamond*,mark options={solid}] table[x=meshsize,y=errL2_u] {dat/k3_hollowtorus_pol.dat};
        \logLogSlopeTriangle{0.90}{0.4}{0.05}{2}{black};
        \logLogSlopeTriangle{0.90}{0.4}{0.05}{3}{black};
        \logLogSlopeTriangle{0.90}{0.4}{0.05}{4}{black};
      \end{loglogaxis}
    \end{tikzpicture}
  \end{minipage}
  \hspace{0.025\textwidth}
  \begin{minipage}{0.40\textwidth}
    \begin{tikzpicture}[scale=0.63]
      \begin{loglogaxis}
        \addplot[style=solid,color=blue,mark=*] table[x=n_DOFs,y=errL2_u] {dat/k1_hollowtorus_tet.dat};
        \addplot[style=solid,color=red,mark=square*] table[x=n_DOFs,y=errL2_u] {dat/k2_hollowtorus_tet.dat};
        \addplot[style=solid,color=brown,mark=diamond*] table[x=n_DOFs,y=errL2_u] {dat/k3_hollowtorus_tet.dat};
        \addplot[style=dashed,color=blue,mark=*,mark options={solid}] table[x=n_DOFs,y=errL2_u] {dat/k1_hollowtorus_pol.dat};
        \addplot[style=dashed,color=red,mark=square*,mark options={solid}] table[x=n_DOFs,y=errL2_u] {dat/k2_hollowtorus_pol.dat};
        \addplot[style=dashed,color=brown,mark=diamond*,mark options={solid}] table[x=n_DOFs,y=errL2_u] {dat/k3_hollowtorus_pol.dat};
        \logLogSlopeTriangleNDOFs{0.10}{-0.4}{0.1}{2/3}{black};
        \logLogSlopeTriangleNDOFs{0.10}{-0.4}{0.1}{1}{black};
        \logLogSlopeTriangleNDOFs{0.10}{-0.4}{0.1}{4/3}{black};
      \end{loglogaxis}
    \end{tikzpicture}
  \end{minipage}
  \caption{\label{fig:hollow.torus.err}
    {\bf Test-case~\ref{sse:hollow.torus} (toroidal vacuum chamber).} Relative energy-error ({\it top row}) and $L^2$-error ({\it bottom row}) vs.~$h_\d$ ({\it left column}) and \#DoF ({\it right column}) on tetrahedral ({\it solid}) and polyhedral ({\it dashed}) meshes.}
\end{figure}
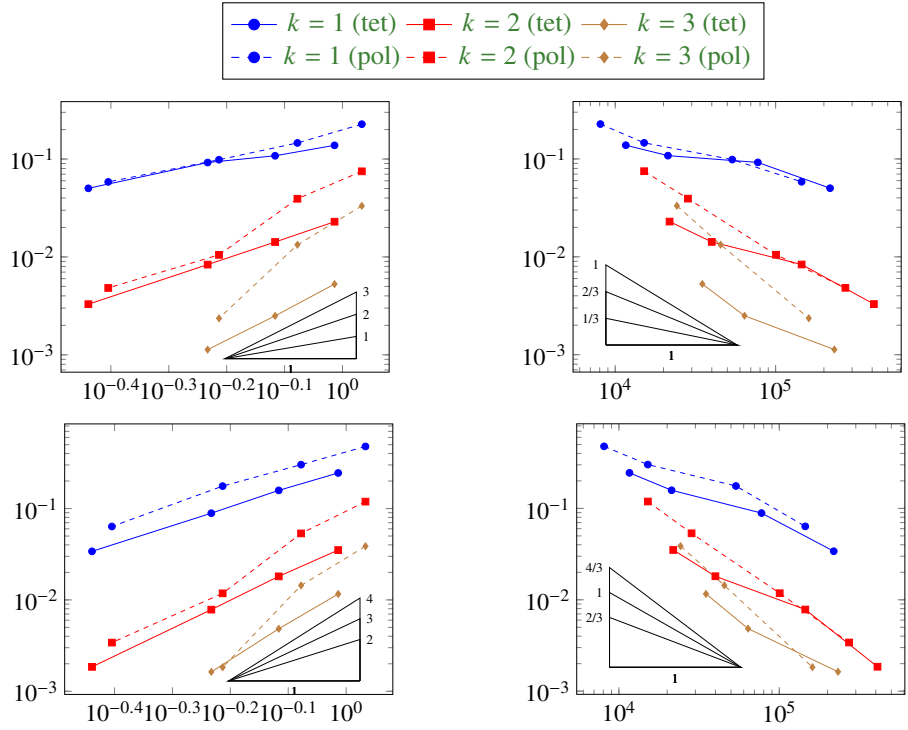

\subsection{Variable permeability} \label{sse:variable}

In this example, we let $\Omega\defi(0,1)^3$, and we solve the vector potential formulation, with smooth exact solution
\begin{equation} \label{eq:sol.variable}
  \vec{a}(x,y,z) \defi \left(
    \begin{tabular}{l}
      $2\pi r(x) \sin(2\pi y) \cos(2 \pi z)$
      \\
      $- r'(x) \cos(2 \pi y)\cos(2 \pi z)$
      \\
      $-2 r'(x) \sin(2 \pi y)\sin(2 \pi z)$
    \end{tabular}
  \right),\qquad r(x)\defi x^3,
\end{equation}
together with smooth, locally variable (inverse) magnetic permeability $\mu^{-1}(x,y,z)\defi 1 + x^2 y^2$.
The vector field $\vec{a}$ given by~\eqref{eq:sol.variable} is solution to a non-homogeneous version of the second-order Problem~\eqref{eq:vecpot.str}: it satisfies~\eqref{eq:vecpot.str.b} (note that~\eqref{eq:vecpot.str.d} is here trivial), but fulfills in place of~\eqref{eq:vecpot.str.c} a non-homogeneous tangential boundary condition.
In our experiments, the electric current density $\vec{j}$ is set according to~\eqref{eq:vecpot.str.a}, and the boundary datum in accordance with~\eqref{eq:sol.variable}.
Remark that $\mu^{-1}$ is not piecewise constant over the partition of the domain. So as to retain optimal convergence rates, we thus had in our implementation to use weighted cell/face inner products (remark that $\mu^{-1}$ is continuous across interfaces). The relative energy-error is computed accordingly. Their accurate computation necessitates to increase the degree of exactness of the quadrature rules.
We here solve Problem~\eqref{eq:vecpot.dis}. The convergence results, collected in Figure~\ref{fig:variable}, show essentially the same behavior as those reported in~\cite[Sec.~3.2.5]{CDPLe:22} for a unit permeability test-case on the same domain.

\begin{figure}[h!]
  \centering
  \ref{legend:variable}
  \vspace{0.25cm}\\
  \begin{minipage}{0.40\textwidth}
    \begin{tikzpicture}[scale=0.63]
      \begin{loglogaxis}[legend columns=3, legend to name=legend:variable]
        \addplot[style=solid,color=blue,mark=*] table[x=meshsize,y=errXnorm_u] {dat/k1_cube_tet.dat};
        \addplot[style=solid,color=red,mark=square*] table[x=meshsize,y=errXnorm_u] {dat/k2_cube_tet.dat};
        \addplot[style=solid,color=brown,mark=diamond*] table[x=meshsize,y=errXnorm_u] {dat/k3_cube_tet.dat};
        \addplot[style=dashed,color=blue,mark=*,mark options={solid}] table[x=meshsize,y=errXnorm_u] {dat/k1_cube_pol.dat};
        \addplot[style=dashed,color=red,mark=square*,mark options={solid}] table[x=meshsize,y=errXnorm_u] {dat/k2_cube_pol.dat};
        \addplot[style=dashed,color=brown,mark=diamond*,mark options={solid}] table[x=meshsize,y=errXnorm_u] {dat/k3_cube_pol.dat};
        \logLogSlopeTriangle{0.90}{0.4}{0.1}{1}{black};
        \logLogSlopeTriangle{0.90}{0.4}{0.1}{2}{black};
        \logLogSlopeTriangle{0.90}{0.4}{0.1}{3}{black};
        \legend{{\small $k=1$ (tet)},{\small $k=2$ (tet)},{\small $k=3$ (tet)},{\small $k=1$ (pol)},{\small $k=2$ (pol)},{\small $k=3$ (pol)}};          
      \end{loglogaxis}
    \end{tikzpicture}
  \end{minipage}
  \hspace{0.025\textwidth}
  \begin{minipage}{0.40\textwidth}
    \begin{tikzpicture}[scale=0.63]
      \begin{loglogaxis}
        \addplot[style=solid,color=blue,mark=*] table[x=n_DOFs,y=errXnorm_u] {dat/k1_cube_tet.dat};
        \addplot[style=solid,color=red,mark=square*] table[x=n_DOFs,y=errXnorm_u] {dat/k2_cube_tet.dat};
        \addplot[style=solid,color=brown,mark=diamond*] table[x=n_DOFs,y=errXnorm_u] {dat/k3_cube_tet.dat};
        \addplot[style=dashed,color=blue,mark=*,mark options={solid}] table[x=n_DOFs,y=errXnorm_u] {dat/k1_cube_pol.dat};
        \addplot[style=dashed,color=red,mark=square*,mark options={solid}] table[x=n_DOFs,y=errXnorm_u] {dat/k2_cube_pol.dat};
        \addplot[style=dashed,color=brown,mark=diamond*,mark options={solid}] table[x=n_DOFs,y=errXnorm_u] {dat/k3_cube_pol.dat};
        \logLogSlopeTriangleNDOFs{0.10}{-0.4}{0.1}{1/3}{black};
        \logLogSlopeTriangleNDOFs{0.10}{-0.4}{0.1}{2/3}{black};
        \logLogSlopeTriangleNDOFs{0.10}{-0.4}{0.1}{1}{black};
      \end{loglogaxis}
    \end{tikzpicture}
  \end{minipage}
  \vspace{0.2cm}\\
  \begin{minipage}{0.40\textwidth}
    \begin{tikzpicture}[scale=0.63]
      \begin{loglogaxis}
        \addplot[style=solid,color=blue,mark=*] table[x=meshsize,y=errL2_u] {dat/k1_cube_tet.dat};
        \addplot[style=solid,color=red,mark=square*] table[x=meshsize,y=errL2_u] {dat/k2_cube_tet.dat};
        \addplot[style=solid,color=brown,mark=diamond*] table[x=meshsize,y=errL2_u] {dat/k3_cube_tet.dat};
        \addplot[style=dashed,color=blue,mark=*,mark options={solid}] table[x=meshsize,y=errL2_u] {dat/k1_cube_pol.dat};
        \addplot[style=dashed,color=red,mark=square*,mark options={solid}] table[x=meshsize,y=errL2_u] {dat/k2_cube_pol.dat};
        \addplot[style=dashed,color=brown,mark=diamond*,mark options={solid}] table[x=meshsize,y=errL2_u] {dat/k3_cube_pol.dat};
        \logLogSlopeTriangle{0.90}{0.4}{0.1}{2}{black};
        \logLogSlopeTriangle{0.90}{0.4}{0.1}{3}{black};
        \logLogSlopeTriangle{0.90}{0.4}{0.1}{4}{black};
      \end{loglogaxis}
    \end{tikzpicture}
  \end{minipage}
  \hspace{0.025\textwidth}
  \begin{minipage}{0.40\textwidth}
    \begin{tikzpicture}[scale=0.63]
      \begin{loglogaxis}
        \addplot[style=solid,color=blue,mark=*] table[x=n_DOFs,y=errL2_u] {dat/k1_cube_tet.dat};
        \addplot[style=solid,color=red,mark=square*] table[x=n_DOFs,y=errL2_u] {dat/k2_cube_tet.dat};
        \addplot[style=solid,color=brown,mark=diamond*] table[x=n_DOFs,y=errL2_u] {dat/k3_cube_tet.dat};
        \addplot[style=dashed,color=blue,mark=*,mark options={solid}] table[x=n_DOFs,y=errL2_u] {dat/k1_cube_pol.dat};
        \addplot[style=dashed,color=red,mark=square*,mark options={solid}] table[x=n_DOFs,y=errL2_u] {dat/k2_cube_pol.dat};
        \addplot[style=dashed,color=brown,mark=diamond*,mark options={solid}] table[x=n_DOFs,y=errL2_u] {dat/k3_cube_pol.dat};
        \logLogSlopeTriangleNDOFs{0.10}{-0.4}{0.1}{2/3}{black};
        \logLogSlopeTriangleNDOFs{0.10}{-0.4}{0.1}{1}{black};
        \logLogSlopeTriangleNDOFs{0.10}{-0.4}{0.1}{4/3}{black};
      \end{loglogaxis}
    \end{tikzpicture}
  \end{minipage}
  \caption{\label{fig:variable}
    {\bf Test-case~\ref{sse:variable} (variable permeability).} Relative energy-error ({\it top row}) and $L^2$-error ({\it bottom row}) versus $h_\d$ ({\it left column}) and \#DoF ({\it right column}) on tetrahedral ({\it solid}) and polyhedral ({\it dashed}) meshes.}
\end{figure}
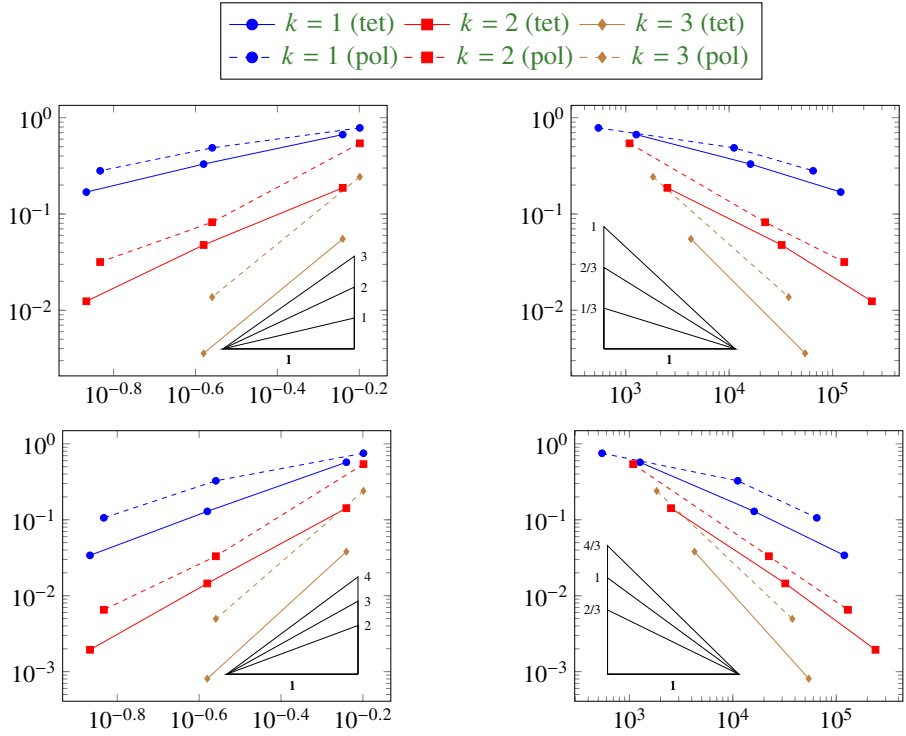

\subsection{Singular solution} \label{sse:singular}

In this last test-case, the domain $\Omega$ is a cylinder of unit radius and height, to which a quarter has been removed, so that it presents a reentrant edge (cf.~the left panel of Figure~\ref{fig:singular}).
We here solve the field formulation, for the following singular exact solution, expressed in cylindrical coordinates:
\ifSISC
\begin{equation} \label{eq:sol.singular}
  \vec{h}(\rho,\varphi,z)\defi\Grad\big(\rho^{\frac{2}{3}}\cos(2\varphi/3)\big).
\end{equation}
\else
\begin{equation} \label{eq:sol.singular}
  \vec{h}(\rho,\varphi,z)\defi\Grad\left(\rho^{\frac{2}{3}}\cos(\frac{2}{3}\varphi)\right).
\end{equation}
\fi
The vector field $\vec{h}$ given by~\eqref{eq:sol.singular} is solution to a non-homogeneous version of the first-order Problem~\eqref{eq:field.str}: it does satisfy~\eqref{eq:field.str.b} (remark that~\eqref{eq:field.str.d} is here trivial), but fulfills in place of~\eqref{eq:field.str.c} a non-homogeneous normal boundary condition on the curved section of $\Gamma$.
In our numerical experiments, the electric current density $\vec{j}$ is set to zero (accordingly to~\eqref{eq:field.str.a}), and the boundary datum is set in accordance with~\eqref{eq:sol.singular} on the given piecewise polygonal approximation of the curved section of $\Gamma$.
It can be verified that $\vec{h}\in\vec{H}^{\frac{2}{3}-\varepsilon}(\Omega)$ for all $\varepsilon>0$, with $\vec{h}$ being singular along the reentrant edge.
We solve Problem~\eqref{eq:field.dis} (with $\CTd$) for $k=1$ on the tetrahedral mesh sequence.
Note that the regularity assumptions of Theorem~\ref{th:field.esti} are violated, hence our theory does not apply here.
We collect on the right panel of Figure~\ref{fig:singular} the results for the relative $L^2$-error on the magnetic field.
We observe that the $L^2$-error attains its maximum possible rate of convergence of $2/3$; no convergence is observed in energy-norm (not reported).
We verified, for each $k\in\{2,3\}$, that the $L^2$-error convergence rate does not depend on the polynomial degree.
Finally, we performed the exact same test, but with discrete rotational operator given by $\Curl_\t$ instead of $\CTd$. We observed no convergence whatsoever, even in $L^2$-norm. This last observation corroborates the discussion in Remark~\ref{re:variant}.

\begin{figure}
  \begin{minipage}{0.50\textwidth}
    \hspace*{1cm}
    \includegraphics[scale=0.14]{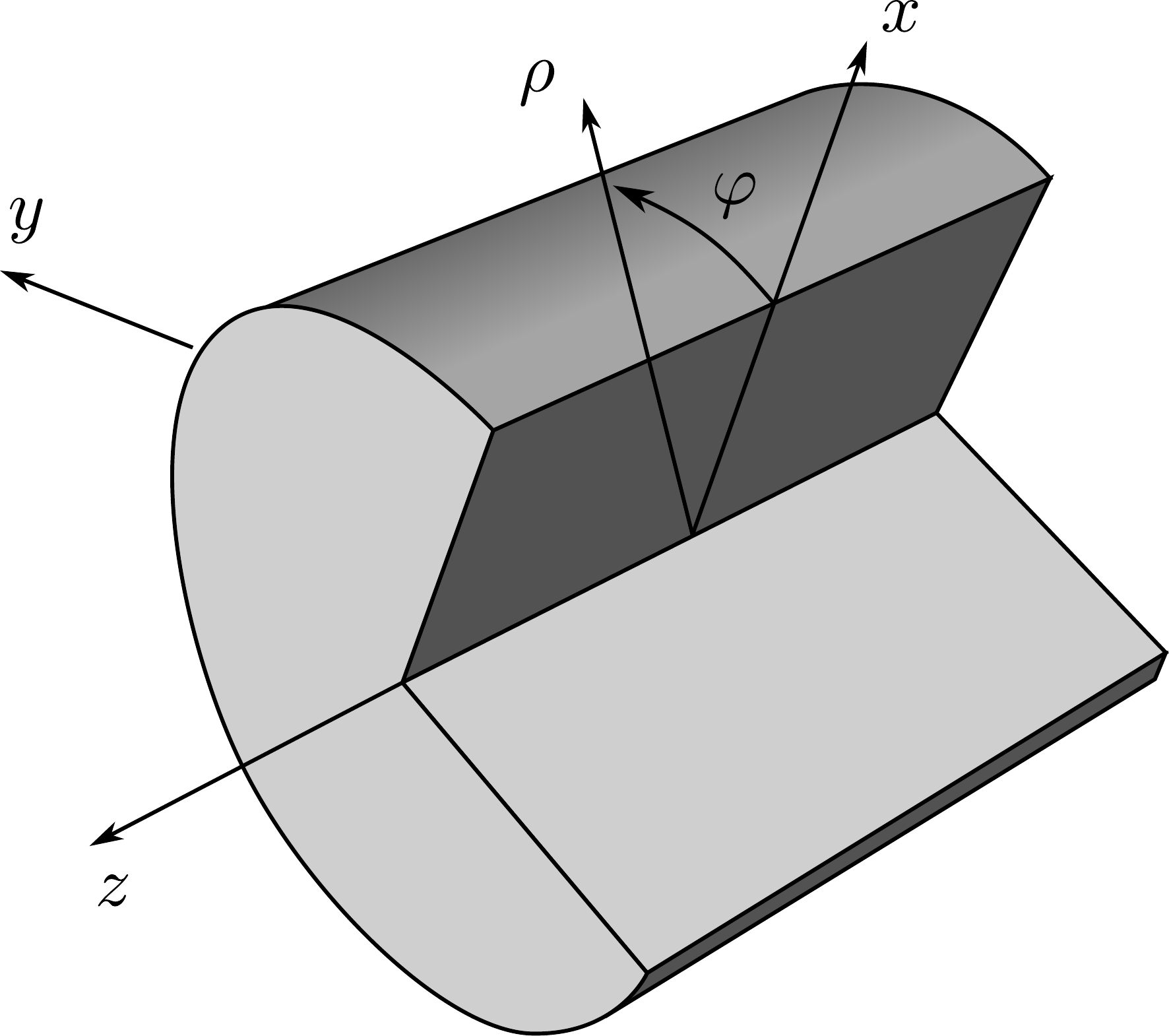}
  \end{minipage}
  \begin{minipage}{0.50\textwidth}
    \vspace*{0.4cm}
    {\small
    \begin{tabular}{c|c|c}
      \hline
      $h_\d$ & $L^2$-error & Rate \\
      \hline\hline
      $4.20\mathrm{E}{-01}$  & $3.62\mathrm{E}{-02}$ & $-$\\
      \hline
      $2.37\mathrm{E}{-01}$  & $2.72\mathrm{E}{-02}$ & $0.50$\\
      \hline
      $1.44\mathrm{E}{-01}$  & $1.99\mathrm{E}{-02}$ & $0.63$\\
      \hline
      $7.00\mathrm{E}{-02}$  & $1.24\mathrm{E}{-02}$ & $0.66$\\
      \hline
    \end{tabular}
    }
  \end{minipage}
  \caption{\label{fig:singular}
    {\bf Test-case~\ref{sse:singular} (singular solution).} The computational domain $\Omega$ ({\it left panel}). Relative $L^2$-error and convergence rate on tetrahedral meshes for $k=1$ ({\it right panel}).}
\end{figure}

\ifSISC
\section*{Acknowledgments}
{\footnotesize The work of the authors is supported by the action ``Pr\'eservation de l'emploi de R\&D'' from the ``Plan de Relance'' (recovery plan) of the French State, as well as by the Agence Nationale de la Recherche (ANR) under the PRCE grant HIPOTHEC (ANR-23-CE46-0013).
The last two authors also acknowledge support from the LabEx CEMPI (ANR-11-LABX-0007).}
\else
\section*{Acknowledgments}
The work of the authors is supported by the action ``Pr\'eservation de l'emploi de R\&D'' from the ``Plan de Relance'' (recovery plan) of the French State, as well as by the Agence Nationale de la Recherche (ANR) under the PRCE grant HIPOTHEC (ANR-23-CE46-0013).
The last two authors also acknowledge support from the LabEx CEMPI (ANR-11-LABX-0007).
\fi

\ifSISC
\bibliographystyle{siamplain}
\bibliography{hybrid_divcurl}
\else
\bibliographystyle{plain}
{\small
\bibliography{hybrid_divcurl}
}
\fi

\end{document}